\documentclass{amsart}
\usepackage{amssymb,amsmath, amsthm,latexsym}
\usepackage{graphicx}
\usepackage{amscd}
\usepackage{graphicx}
\usepackage{tikz-cd}
\newcommand{\cal}[1]{\mathcal{#1}}
\theoremstyle{plain}

\newtheorem{theo}{Theorem}

\newtheorem{lemma}{Lemma}[section]
\newtheorem{theorem}[lemma]{Theorem}
\newtheorem{proposition}[lemma]{Proposition}
\newtheorem{corollary}[lemma]{Corollary}
\theoremstyle{definition}
\newtheorem{definition}[lemma]{Definition}
\newtheorem{defi}[theo]{Definition}
\newtheorem{remark}[lemma]{Remark}

\parskip=\bigskipamount

\let\egthree=\phi
\let\phi=\varphi
\let\varphi=\egthree

\begin{document}
\title[Spin mapping class group]
{Generating the spin mapping class group by Dehn twists} 
\author{Ursula Hamenst\"adt}
\thanks
{AMS subject classification: 30F30, 30F60, 37B10, 37B40}
\date{May 1, 2021}

\begin{abstract}
  Let $\phi$ be a $\mathbb{Z}/2\mathbb{Z}$-spin structure
 on a closed oriented surface $\Sigma_g$ of 
 genus $g\geq 4$.
We determine a generating set of 
the stabilizer 
of $\phi$
in the mapping class group of $\Sigma_g$ 
 consisting of Dehn twists about an 
explicit collection of $2g+1$ curves on $\Sigma_g$. 
If $g=3$ then we determine a generating set 
of the stabilizer of an odd $\mathbb{Z}/4\mathbb{Z}$-spin
structure consisting of Dehn twists about a collection of 
$6$ curves. 
\end{abstract}

\maketitle

\section{Introduction}

For some $r\geq 2$,
a $\mathbb{Z}/r\mathbb{Z}$-spin structure on 
a closed surface $\Sigma_g$ of genus $g$ is a cohomology
class $\phi\in H^1(UT\Sigma_g,\mathbb{Z}/r\mathbb{Z})$ which evaluates
to one on the oriented fibre of the unit tangent bundle $UT\Sigma_g\to \Sigma_g$ 
of $\Sigma_g$. 
Such a spin structure exists for all $r$ which divide $2g-2$. If $r$ is even,
then it reduces to a $\mathbb{Z}/2\mathbb{Z}$-spin structure on $\Sigma_g$. 

A $\mathbb{Z}/2\mathbb{Z}$-spin structure 
on $\Sigma_g$ has 
a \emph{parity}, either even or odd. Thus there is a notion of parity for all
$\mathbb{Z}/r\mathbb{Z}$-spin structures with $r$ even.
If $\phi,\phi^\prime$ are two $\mathbb{Z}/r\mathbb{Z}$-spin
structures on $\Sigma_g$ so that 
either $r$ is odd or $r$ is even and the parities of
$\phi,\phi^\prime$ coincide, then
there exists an element of the mapping class
group ${\rm Mod}(\Sigma_g)$ of $\Sigma_g$ which maps $\phi$ 
to $\phi^\prime$. Hence
the stabilizers of $\phi$ and $\phi^\prime$ in ${\rm Mod}(\Sigma_g)$ 
are conjugate. 

Spin structures naturally arise in the context of abelian differentials on 
$\Sigma_g$. The moduli space of such differentials decomposes into
strata of differentials whose zeros are of the same order
and multiplicity. 
Understanding the orbifold fundamental group of such strata requires
some understanding of their projection to the mapping class group.
If the orders of the zeros of the differentials are all multiples of the same
number $r\geq 2$, then 
this quotient group preserves a $\mathbb{Z}/r\mathbb{Z}$-spin structure 
$\phi$ on $\Sigma_g$. Hence the orbifold fundamental
groups of components of strata relate to 
stabilizers ${\rm Mod}(\Sigma_g)[\phi]$ 
of spin structures $\phi$ on $\Sigma_g$.

To make such a relation explicit we define

\begin{defi}\label{curvesystem}
A \emph{curve system} on a closed surface 
$\Sigma_g$ is a finite collection of smoothly embedded
simple closed curves on $\Sigma_g$ which are 
 non-contractible and mutually not freely homotopic, 
and 
such that any two curves from this collection intersect transversely 
in at most one point.
\end{defi}

A curve system defines a \emph{curve diagram} which is a finite graph
whose vertices are the curves from the system and where two
such vertices are connected by an edge if the curves intersect.

\begin{defi}
  A curve system on $\Sigma_g$ is \emph{admissible} if it decomposes
  $\Sigma_g$ into a collection of topological disks and if its 
  curve diagram is a tree.
\end{defi}

Using a construction of Thurston and Veech (see \cite{L04} for a 
comprehensive account), admissible curve systems on $\Sigma_g$ 
give rise to abelian differentials on $\Sigma_g$, and the 
component of the stratum and hence the equivalence class of 
a spin structure (if any) it defines can be read off explicitly from 
the combinatorics of the curve system. 
This makes it desirable to investigate the subgroup of the mapping class
group generated by Dehn twists about the curves of an 
admissible curve system.

The main goal of this article is to present a systematic study of
stabilizers
of suitably chosen curves in the spin mapping class group
${\rm Mod}(\Sigma_g)[\phi]$ 
and to use this information
to build generators for this group by induction over subsurfaces.
As a main application we obtain the following.

For 
$g\geq 3$ let ${\cal C}_g$ and ${\cal V}_g$ be the 
collections of $2g+1$ nonseparating simple closed curves
on a closed surface $\Sigma_g$ of genus $g$ shown in Figure 1.
\begin{figure}[ht]
\begin{center}
\includegraphics[width=0.9\textwidth]{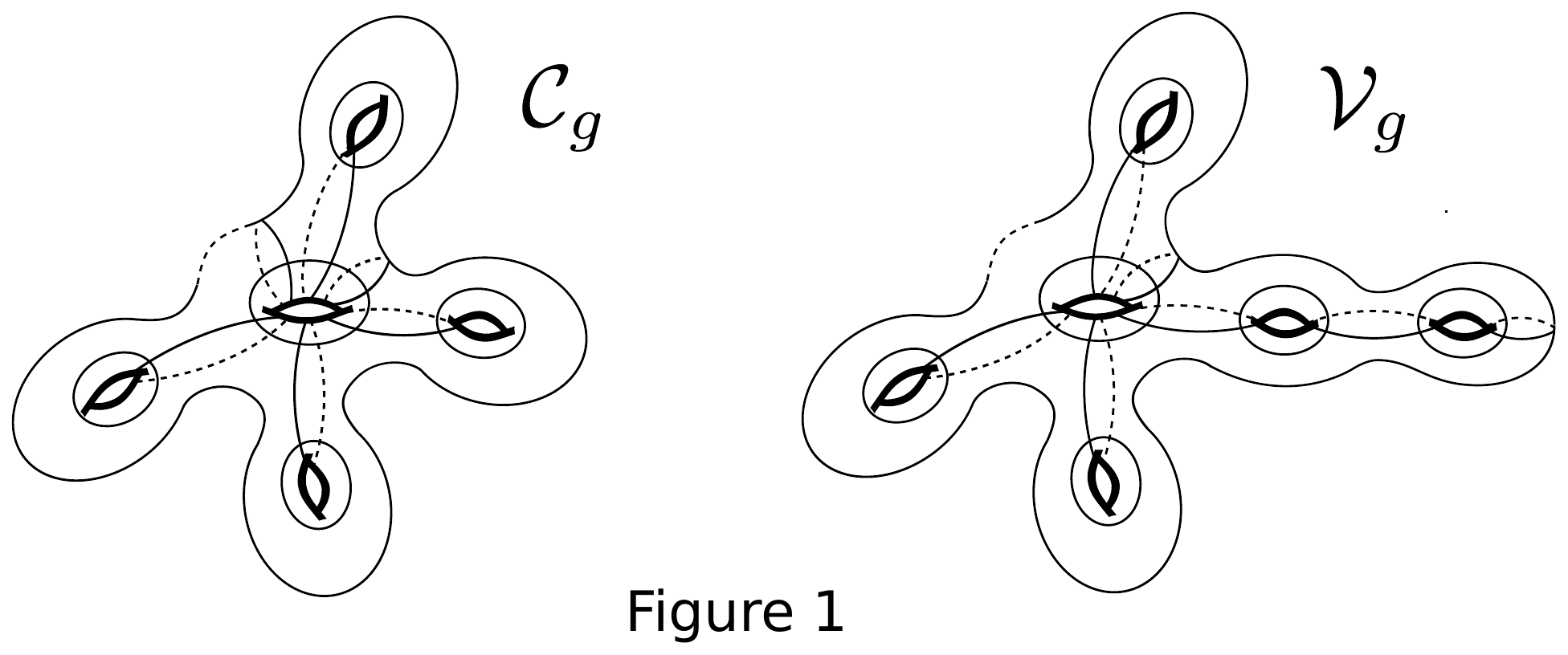}
\end{center}
\end{figure}
We show

\begin{theo}\label{main2}
\begin{enumerate}
\item 
Let $\phi$ be an odd $\mathbb{Z}/2 \mathbb{Z}$-spin structure on 
a closed surface $\Sigma_g$ of genus $g\geq 3$. Then 
${\rm Mod}(\Sigma_g)[\phi]$ is generated by the Dehn twists about the curves from the
curve system ${\cal C}_g$. 
\item Let $\phi$ be an even $\mathbb{Z}/2\mathbb{Z}$-spin structure
on a closed surface $\Sigma_g$ of genus $g\geq 4$. Then 
${\rm Mod}(\Sigma_g)[\phi]$ is generated by the Dehn twists about the curves from the
curve system ${\cal V}_g$. 
\end{enumerate}
\end{theo}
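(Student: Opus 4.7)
The plan is to prove both statements by induction on the genus $g$, using the combinatorial structure of the curve systems $\mathcal{C}_g$ and $\mathcal{V}_g$ and their restriction to natural subsurfaces. As a preliminary step I would verify that each Dehn twist about a curve in $\mathcal{C}_g$ (resp.~$\mathcal{V}_g$) really lies in ${\rm Mod}(\Sigma_g)[\phi]$; this amounts to a combinatorial check that $\phi$ evaluates to the ``odd'' value on every curve of the system, which is the standard condition for the associated Dehn twist to preserve $\phi$.

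For the inductive step, I would isolate a curve $c$ from the system whose removal leaves an admissible curve system supported on a subsurface of genus $g-1$ (for instance a leaf of the tree given by the curve diagram). Cutting $\Sigma_g$ along $c$ yields a surface on which $\phi$ restricts to a spin structure of lower complexity, and by the inductive hypothesis the twists about the remaining $2g$ curves generate the spin stabilizer of this restricted structure. The stabilizer of $c$ in ${\rm Mod}(\Sigma_g)$ fits into a short exact sequence with kernel generated by $T_c$ and quotient equal to the mapping class group of the cut surface, so the inductive generators lift to elements of ${\rm Mod}(\Sigma_g)[\phi]$ that fix $c$ and, together with $T_c$, exhaust the full stabilizer of $c$ inside ${\rm Mod}(\Sigma_g)[\phi]$.

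The remaining task is to show that these elements already generate the whole group ${\rm Mod}(\Sigma_g)[\phi]$. Equivalently, I need the subgroup generated by twists about curves in the system to act transitively on a sufficiently rich set of $\phi$-odd configurations, so that any element of ${\rm Mod}(\Sigma_g)[\phi]$ can be multiplied by a word in the generators to fix $c$ and thus be reduced to the inductively generated stabilizer. I would try to establish this transitivity by propagating the orbit of $c$ via elementary curve-replacement moves of the form $c\mapsto T_a T_b(c)$, using chain and braid relations among adjacent twists in the system and pairing each move, whenever necessary, with a parity-correcting twist about another $\phi$-odd curve.

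The main obstacle, in my view, is precisely this transitivity step. A direct attempt to mimic Humphries' generation of the full mapping class group by $2g+1$ twists fails because the intermediate mapping classes need not preserve $\phi$, so every basic move must be corrected by an auxiliary twist from the system, and the book-keeping of these corrections is the technical heart of the argument. The low-genus hypotheses $g\geq 3$ in the odd case and $g\geq 4$ in the even case presumably mark the first point at which enough handles are available to perform these moves; the base cases themselves would then have to be treated separately, likely with reference to existing generation results for low-genus spin mapping class groups.
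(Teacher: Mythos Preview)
Your outline captures the inductive spirit of the argument, but several of the concrete choices you make would not work, and the transitivity step is handled in the paper by machinery quite different from what you sketch.

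First, the curve you cut along is not a curve of the system $\mathcal{C}_g$ itself. Cutting along a leaf $c$ of the tree does not drop the genus; the curves of an admissible system are nonseparating, and removing one from the system does not produce a system on a smaller surface. In the paper one cuts along an auxiliary nonseparating curve $a$ with $\phi(a)=1$ that meets exactly one curve of the system in a single point and is disjoint from the rest; after cutting along $a$ and capping off, the entire system $\mathcal{C}_g$ descends to (a variant of) $\mathcal{C}_{g-1}$, and crucially $\phi$ descends to a spin structure of the \emph{same} parity on $\Sigma_{g-1}$.

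Second, the stabilizer analysis is more delicate than the short exact sequence you describe. Besides the central $\mathbb{Z}$ generated by $T_a$, one must control the point-pushing subgroup coming from the Birman exact sequence for the twice-punctured surface $\Sigma_{g-1,2}$. The paper shows (Lemma~\ref{generatepointpush}) that because two curves of the system bound a once-punctured annulus around each puncture, the point-pushing group lies in the subgroup generated by the twists. One also needs an explicit element exchanging the two sides of $a$, and an element exchanging $a$ with a disjoint curve of the same type; both are produced from Matsumoto's Garside element for the $E_6$ subdiagram, and then $T_a^2\in\Gamma$ is deduced from these together with the inductive hypothesis.

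Third, the transitivity you flag as the main obstacle is not achieved by ad hoc curve-replacement moves. The paper introduces the graph $\mathcal{CG}_1^+$ of nonseparating curves with $\phi=\pm1$ (edges for disjoint nonseparating pairs), proves it is connected for $g\geq 3$, shows ${\rm Mod}(\Sigma_g)[\phi]$ acts transitively on its directed edges, and then packages the result as a surjection from an amalgamated free product of two copies of the vertex stabilizer onto ${\rm Mod}(\Sigma_g)[\phi]$ (Proposition~\ref{spinmap}). This reduces everything to showing $\Gamma$ contains a full vertex stabilizer plus the edge-exchanging involution.

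Finally, the even case is not a parallel induction of the same shape: the paper first reduces to the odd case in genus $g-1$ via a change-of-parity construction (cutting off a one-holed torus on which $\phi$ vanishes), using a different graph $\mathcal{CG}_2^+$. Also, in both parities the argument actually proceeds in two stages: one first proves generation by a larger system ($\mathcal{S}_g$ with $3g-2$ curves, resp.\ $\mathcal{U}_g$), and only then shows that the extra twists already lie in the group generated by the $2g+1$ curves of $\mathcal{C}_g$ (resp.\ $\mathcal{V}_g$), using chain relations and the $E_6$ involution.
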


That the spin mapping class group can be generated by finitely many 
finite products of Dehn twists 
is due to Hirose. In \cite{Hi02} he found for any genus $g\geq 2$ 
a generating set for the stabilizer of 
an even $\mathbb{Z}/2\mathbb{Z}$-spin structure by finitely many finite
products of Dehn twists, and the stabilizer of 
an odd $\mathbb{Z}/2\mathbb{Z}$-spin structure is treated in \cite{Hi05}. 

For surfaces of genus $g\geq 5$, 
Calderon \cite{Cal19} and Calderon and Salter \cite{CS19} 
identified the image of the
orbifold fundamental group of most components of strata 
in the mapping class group by constructing a different
but equally explicit generating
set for the spin mapping class group.  Earlier Salter 
(Theorem 9.5 of \cite{Sa19}) obtained 
a partial result by identifying for $g\geq 5$ a finite generating set 
of a finite index subgroup of the spin mapping class group by Dehn twists.
Walker \cite{W09,W10} obtained
some information on the image of the orbifold fundamental group of 
some strata of quadratic differentials in the mapping class group
using completely different tools.  

Theorem \ref{main2} does not construct generators for
the stabilizer of an even $\mathbb{Z}/2\mathbb{Z}$-spin
structure on a surface of genus $g=2,3$. Namely,
in these cases there is no admissible curve system
with the property that the Dehn twists about the curves
from the system stabilize an even
$\mathbb{Z}/2\mathbb{Z}$-spin structure and 
such that the Dehn twists about these curves generate a finite
index subgroup of the mapping class group.
This corresponds to a classification result of Kontsevich and
Zorich \cite{KZ03}: There is no component of a stratum
of abelian differentials with a single zero on a surface of genus $2$
and even spin structure. On a surface $\Sigma_3$ of genus 3,
the component of the stratum of 
abelian differentials with two zeros of order
two and even spin structure is hyperelliptic and hence
the projection of its orbifold fundamental group to
${\rm Mod}(\Sigma_3)$ 
commutes with a hyperelliptic involution and 
is of infinite index.

Our results can be used to construct an
explicit finite set of generators
of the stabilizer of a $\mathbb{Z}/r\mathbb{Z}$-spin structure
for any $r\leq 2g-2$ and any closed surface $\Sigma_g$, 
given by Dehn twists, 
positive powers of Dehn twists and products of Dehn twists about two
simple closed curves forming a bounding pair.
Potentially 
they can also be used inductively to find generators 
by Dehn twists about curves from an admissible curve system.  
We carry this program only out in a single case,
which is the odd $\mathbb{Z}/4\mathbb{Z}$-spin structure on a
surface of genus 3.

Consider the system ${\cal E}_6$ of simple closed curves on the
surface $\Sigma_3$ of genus 3
shown in Figure 2 which is of particular relevance for
the understanding of the stratum of abelian differentials with a single
zero on $\Sigma_3$ \cite{LM14}. 
\begin{figure}[ht]
\begin{center}
\includegraphics[width=0.8\textwidth]{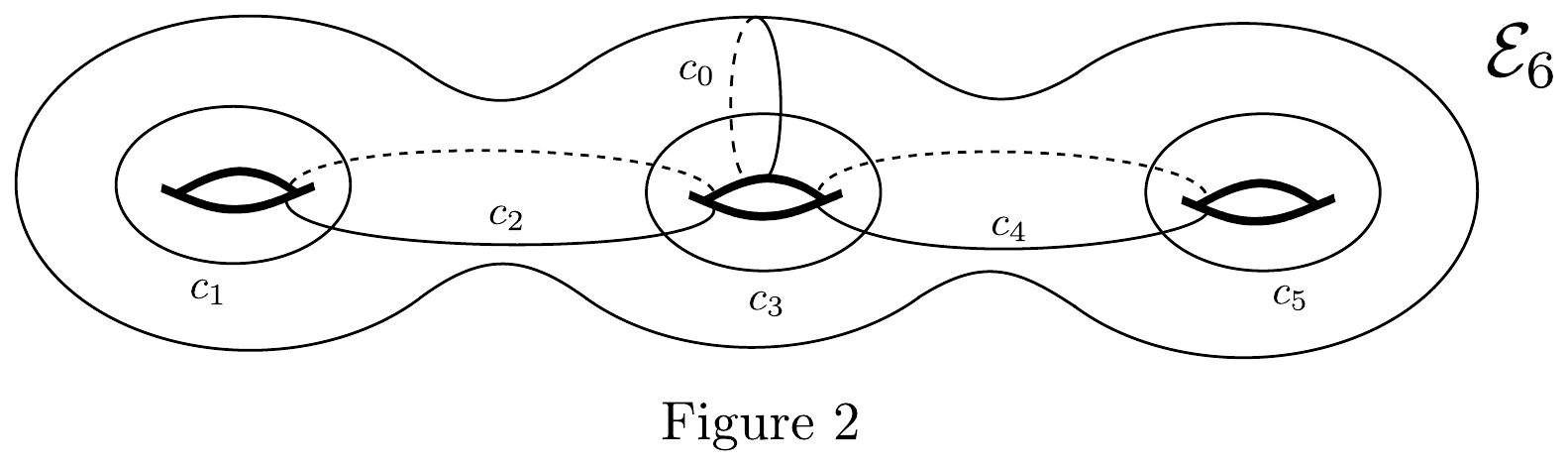}
\end{center}
\end{figure}
We show

\begin{theo}\label{main3}
The subgroup of ${\rm Mod}(\Sigma_3)$ generated by the Dehn twists about the curves
from the curve system ${\cal E}_6$ equals the stabilizer of an odd 
$\mathbb{Z}/4\mathbb{Z}$-spin structure on $\Sigma_3$.
\end{theo}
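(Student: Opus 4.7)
The plan is to prove the two inclusions separately, with the reverse containment being the substantial part. For the inclusion $\langle T_c : c \in \mathcal{E}_6\rangle \subseteq {\rm Mod}(\Sigma_3)[\phi]$, recall that a Dehn twist $T_c$ preserves a $\mathbb{Z}/r\mathbb{Z}$-spin structure $\phi$ if and only if the associated framing winding number $\phi(c) \in \mathbb{Z}/r\mathbb{Z}$ vanishes. The Thurston-Veech construction applied to the admissible curve system $\mathcal{E}_6$ yields an explicit abelian differential on $\Sigma_3$ with a single zero of order $2g-2=4$, whose induced $\mathbb{Z}/4\mathbb{Z}$-spin structure is $\phi$; the curves of $\mathcal{E}_6$ appear as horizontal or vertical cylinder cores in the resulting flat structure, so their $\phi$-winding numbers can be read off from the combinatorics and all vanish modulo $4$.

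For the reverse inclusion, I would exploit the fact that the $\mathbb{Z}/2\mathbb{Z}$-reduction $\phi_2$ of $\phi$ is odd. Hence ${\rm Mod}(\Sigma_3)[\phi] \subseteq {\rm Mod}(\Sigma_3)[\phi_2]$, and by Theorem \ref{main2}(1) the larger group is generated by the seven Dehn twists about the curves in $\mathcal{C}_3$. The set of $\mathbb{Z}/4\mathbb{Z}$-spin refinements of $\phi_2$ is a principal homogeneous space under $H^1(\Sigma_3,\mathbb{Z}/2\mathbb{Z})$, and ${\rm Mod}(\Sigma_3)[\phi_2]$ acts on this set via its action on $H^1$; its stabilizer of $\phi$ is precisely ${\rm Mod}(\Sigma_3)[\phi]$, a finite index subgroup. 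Applying the general machinery alluded to in the introduction, one obtains an explicit finite generating set for ${\rm Mod}(\Sigma_3)[\phi]$ consisting of: (i) Dehn twists $T_c$ for curves with $\phi(c)\equiv 0\pmod 4$, and (ii) certain bounding-pair maps $T_c T_{c'}^{-1}$ where $c,c'$ share a common nonzero $\phi$-winding.

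The core task is then to realize each such generator as a word in the six Dehn twists of $\mathcal{E}_6$. For generators of type (i) I would combine braid relations along the $E_6$ diagram, the chain relation applied to sub-chains of $\mathcal{E}_6$, and the lantern relation applied to sub-configurations; these produce Dehn twists about a large family of curves whose winding numbers match the required ones. For generators of type (ii) I would set up a subsurface induction: cut $\Sigma_3$ along a separating curve whose Dehn twist already lies in $H:=\langle T_c:c\in\mathcal{E}_6\rangle$ (produced via the chain relation on a subchain of $\mathcal{E}_6$), and reduce to the spin-stabilizer generation problem on the resulting lower-complexity pieces, where the inductive hypothesis from the general framework applies.

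The main obstacle will be the explicit realization of the bounding-pair maps as words in only six Dehn twists. The $E_6$ configuration places tight constraints on which separating curves can be constructed inside $H$ through the chain relation, and matching these to the precise bounding pairs dictated by the general machinery requires a careful topological analysis specific to genus $3$ — in particular, using the exceptional fact that $\Sigma_3$ admits an abelian differential with a single zero of order $4$, which is what allows six Dehn twists to suffice rather than needing the extra powers or bounding-pair maps appearing in the general statement.
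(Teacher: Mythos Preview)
Your proposal is a strategy outline rather than a proof, and it diverges substantially from the paper's argument. The paper does \emph{not} proceed by extracting an abstract generating set for ${\rm Mod}(\Sigma_3)[\Phi]$ inside ${\rm Mod}(\Sigma_3)[\phi_2]$ and then rewriting each generator in the $E_6$ twists. Instead it introduces a graph ${\cal O\cal S}$ of \emph{odd separating} simple closed curves, shows ${\cal O\cal S}$ is connected and that ${\rm Mod}(\Sigma_3)[\Phi]$ acts transitively on directed edges, and then reduces everything to checking that $\Gamma\cap{\rm Stab}(d)={\rm Mod}(\Sigma_3)[\Phi]\cap{\rm Stab}(d)$ for one specific separating curve $d$. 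That stabilizer computation is done concretely via Lemma~\ref{pointpush1}, Matsumoto's $D_5$ and $A_4$ relations (which produce $T_{a_j}^4$ and $T_d$ in $\Gamma$), and the chain relation to manufacture exactly the squared point-pushing maps needed.

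The gaps in your route are real. First, the ``general machinery alluded to in the introduction'' that would hand you a generating set of the form (i)+(ii) is a claim the paper advertises but does not prove in generality; you cannot cite it as a black box. Second, even granting such a generating set, your ``core task'' is the whole proof: you would need to exhibit, for every bounding pair $(c,c')$ on the list, an explicit $E_6$-word realizing $T_cT_{c'}^{-1}$, and you give no mechanism for doing so beyond naming braid, chain, and lantern relations. The paper's approach avoids this open-ended search precisely by replacing ``generate every element'' with ``act transitively on a connected graph and match one vertex stabilizer,'' which localizes the computation to a single subsurface where the needed identities (chain relation, Matsumoto's Garside and $D_5$ words) are already known. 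Your subsurface induction idea is in the right spirit, but as stated it has no base case and no inductive hypothesis.
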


The strategy for the
proofs of the  main results 
is as follows. 

For some $r\geq 2$ 
let us consider an arbitrary $\mathbb{Z}/r\mathbb{Z}$-spin structure 
$\phi$ on a compact oriented
surface $S$ of genus $g\geq 2$,  perhaps with boundary. 
Following \cite{HJ89} and \cite{Sa19}, 
the spin structure can be viewed as a
$\mathbb{Z}/r\mathbb{Z}$-valued function on oriented closed curves on $S$
which 
assumes the value one on the
oriented boundary of an embedded disk in $S$. 
Changing the orientation of the curve changes the value of $\phi$ on the curve to its negative
\cite{HJ89,Sa19}.

Define a graph ${\cal C\cal G}_1^+$ as follows. Vertices are nonseparating simple 
closed curves $c$ on $S$ with $\phi(c)=\pm 1$,
and two such vertices $d,e$  are 
connected by an edge if $d,e$ can be realized disjointly and if furthermore, 
$S-(d\cup e)$ is connected. Thus ${\cal C\cal G}_1^+$ is a subgraph of the curve
graph of $S$. The stabilizer ${\rm Mod}(S)[\phi]$ of $\phi$ in the mapping class group
of $S$ acts on ${\cal C\cal G}_1^+$ as a group of simplicial automorphisms.

In Section \ref{graphsofcurves} we show
that for any $g\geq 3$ and $r\leq 2g-2$ 
the graph ${\cal C\cal G}_1^+$ is connected.
We also note that for an odd $\mathbb{Z}/2\mathbb{Z}$-spin 
structure on a surface of genus $g=2$, this 
is not true. In 
Section \ref{theaction} we verify that the action of the group 
${\rm Mod}(S)[\phi]$ on the graph ${\cal C\cal G}_1^+$ is transitive 
on vertices.

For a vertex $c$ of ${\cal C\cal G}_1^+$ we are then led to
  describing the intersection of ${\rm Mod}(S)[\phi]$ with the
  stabilizer of $c$ in ${\rm Mod}(S)$.
Most important is the understanding of the intersection of
${\rm Mod}(S)[\phi]$ with the so-called \emph{disk pushing subgroup},
namely the kernel of the natural homomorphism of the stabilizer of
$c$ to the mapping class group of the surface obtained from
$S-c$ by capping off the two distinguished boundary components of
$S-c$. This is also carried out in Section \ref{theaction}.

In 
Section \ref{structureof} we specialize further to 
a $\mathbb{Z}/2\mathbb{Z}$-spin structure $\phi$. We find a presentation of  
${\rm Mod}(S)[\phi]$ as a quotient of a 
$\mathbb{Z}/2\mathbb{Z}$-extension of the free product
of two copies of the stabilizer of a vertex of ${\cal C\cal G}_1^+$,
amalgamated over the stabilizer of an edge of ${\cal C\cal G}_1^+$.
This is used to prove the first part of 
Theorem \ref{main2} 
with an argument by induction on the genus $g$ of the
closed surface $\Sigma_g$.

The proof of the second part of Theorem \ref{main2} 
uses similar methods and 
is contained in 
Section \ref{structureeven}. A variation of these arguments 
yield the proof of Theorem \ref{main3} in  Section \ref{special}.

The appendix contains a technical variation of the main result of
Section \ref{graphsofcurves} which is used in Section \ref{structureeven}.
Its proof follows along exactly the same line as the proof of the main
result of Section \ref{graphsofcurves}.

This work is inspired by the article \cite{Sa19} of Salter. 
However, aside from some simple constructions using curves
and Proposition 4.9 of \cite{Sa19}, our approach 
uses different methods.

{\bf Acknowledgement:} I am grateful to Dawei Chen,
Samuel Grushevsky, Martin M\"oller and 
Nick Salter for useful discussions. Thanks to Susumu Hirose 
for pointing out the references \cite{Hi02} and \cite{Hi05}. 
Finally I am very indebted to the anonymous referee for careful reading
and for suggesting the proof of Proposition \ref{genus3} which largely
simplifies my original argument. 
This work was completed while the author was in 
residence at the MSRI in Berkeley, California, in the fall
semester 2019, supported by the National Science Foundation
under Grant No. DMS-1440140.

\section{Graphs of curves with fixed spin value}\label{graphsofcurves}

In this section we consider a compact surface $S$ of genus
$g\geq 2$, with or without boundary. For a number $r\geq 2$ 
we introduce $\mathbb{Z}/r\mathbb{Z}$-spin  structures on 
$S$ and use these structures to define various subgraphs of the 
curve graph of $S$. Of primary interest is a graph ${\cal G\cal G}_1$ 
whose vertices are 
nonseparating simple closed curves
with spin value $\pm 1$ and where two such curves are connected
by an edge if they can be realized disjointly. We then study connectedness of this graph.

Small genus of the surface may cause the graph ${\cal C\cal G}_1$  
to have few edges.
This problem leads us to proceed in two steps. In Proposition \ref{genus3} we 
 show connectedness of ${\cal C\cal G}_1$ for surfaces of genus $g\geq 3$ and
 $r=2,4$, taking advantage of some special properties of $\mathbb{Z}/2\mathbb{Z}$ and
 $\mathbb{Z}/4\mathbb{Z}$ spin structures. Proposition \ref{connected6}
 shows connectedness of ${\cal C\cal G}_1$ for surfaces of genus $g\geq 4$ and
 all $r$, taking advantage of sufficiently large complexity of the underlying surface. 
These results are used in Section \ref{theaction} 
to study the stabilizer of a spin structure in the mapping class group of $S$.

This section is divided into 5 subsections. We begin with summarizing some
information on spin structures. Each of the remaining subsections is devoted
to the investigation of a specific subgraph of the curve graph of $S$ defined
by a spin structure $\phi$ on $S$.

\subsection{Spin structures}

The following is taken from
\cite{HJ89}, see Definition 3.1 of \cite{Sa19}. For its formulation,
denote by $\iota$ the symplectic form on $H_1(S,\mathbb{Z})$. 

\begin{definition}[Humphries-Johnson]\label{spin}
For a number $r\geq 2$, a 
\emph{$\mathbb{Z}/r\mathbb{Z}$-spin structure} on $S$ 
is a $\mathbb{Z}/r\mathbb{Z}$-valued 
function $\phi$ on 
isotopy classes of oriented simple closed
curves on $S$ with the following properties.
\begin{enumerate}
\item (Twist linearity) Let $c,d$ be oriented simple closed curves
and let $T_c$ be the left Dehn twist about $c$; then
\[\phi(T_c(d))=\phi(d)+\iota(d,c)\phi(c)\quad \text{ (mod }r).\]
\item (Normalization) $\phi(\zeta)=1$ for 
the oriented boundary $\zeta$ of an embedded disk
$D\subset S$.
\end{enumerate}
\end{definition}

As an additional property, one obtains that 
whenever 
$c^{-1}$ is obtained from $c$ by reversing the orientation,
then $\phi(c^{-1})=-\phi(c)$ (Lemma 2.2 of \cite{HJ89}).

Humphries and Johnson \cite{HJ89} (see Theorem 3.5 of \cite{Sa19})
also give an alternative description of spin structures.
Namely, for some choice of a hyperbolic metric on $S$ let
 $UTS$ be the unit tangent bundle of $S$. 
  It can be viewed as the quotient of the complement of the
  zero section in the tangent bundle of $S$ by the multiplicative
  group $(0,\infty)$ and hence it does not depend on the metric.

The \emph{Johnson lift} of a
smoothly embedded oriented simple closed curve $c$ on $S$ 
is simply the closed curve in $UTS$ which consists of all unit tangents of $c$
defining the given orientation. The following is Theorem 2.1 and Theorem 2.5 of 
\cite{HJ89} as formulated in Theorem 3.5 of \cite{Sa19}. 

\begin{theorem}[Humphries-Johnson]\label{cohomology}
Let $S$ be a compact surface and let $\zeta$ be the oriented
fibre of the unit tangent bundle $UTS\to S$. 
A cohomology class $\psi\in H^1(UTS,\mathbb{Z}/r\mathbb{Z})$
with $\psi(\zeta)=1$ 
determines a $\mathbb{Z}/r\mathbb{Z}$-spin structure via
\[\alpha\to \psi(\tilde \alpha)\]
where $\alpha$ is an oriented simple closed curve on $S$ and
$\tilde \alpha$ is its Johnson lift. This determines a 1-1 correspondence
between $\mathbb{Z}/r\mathbb{Z}$-spin structures and 
\[\{\psi\in H^1(UTS,\mathbb{Z}/r\mathbb{Z})\mid \psi(\zeta)=1\}.\]
\end{theorem}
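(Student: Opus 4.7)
The plan is to show that the map $\psi\mapsto\phi$ given by $\phi(\alpha):=\psi(\tilde\alpha)$ is well-defined (in the sense that $\phi$ is a spin structure) and bijective. The strategy has two parts: first, verify the two axioms of Definition \ref{spin} for $\phi$, and then compare torsor structures on both sides.

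For the normalization axiom, if $\alpha=\partial D$ bounds an embedded disk $D\subset S$, it suffices to show that $[\tilde\alpha]=[\zeta]$ in $H_1(UTS,\mathbb{Z})$. This reduces to a local computation on $UTS|_D\cong D\times S^1$ (trivial because $D$ is contractible): as one traverses $\partial D$, the tangent vector makes exactly one full turn, a Poincar\'e--Hopf count corresponding to $\chi(D)=1$. Applying $\psi$ then gives $\psi(\tilde\alpha)=\psi(\zeta)=1$.

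The main obstacle is twist linearity, which reduces to the homological identity
\[[\widetilde{T_c(d)}]=[\tilde d]+\iota(d,c)\,[\tilde c]\quad\text{in }H_1(UTS,\mathbb{Z}).\]
I would prove this by working in a closed annular neighborhood $A$ of $c$ on which $T_c$ is supported. The intersection $d\cap A$ consists of $|\iota(c,d)|$ transverse arcs; applying $T_c$ replaces each such arc by one that wraps once around $c$, with sign $\iota(d,c)$. Lifting to $UTS$, each replacement contributes $\pm[\tilde c]$ to the homology class, and the rotations of the tangent frame induced by the twist align coherently with the canonical framing along $\tilde c$, so no spurious multiple of $\zeta$ appears. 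Applying $\psi$ to this identity gives the twist linearity formula, and this is the delicate step, requiring careful bookkeeping of the orientation conventions used to define both the Dehn twist and the Johnson lift.

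For bijectivity, I would invoke the Gysin sequence of the oriented $S^1$-bundle $UTS\to S$,
\[0\to H^1(S,\mathbb{Z}/r\mathbb{Z})\to H^1(UTS,\mathbb{Z}/r\mathbb{Z})\to H^0(S,\mathbb{Z}/r\mathbb{Z})\stackrel{\cup e}{\to}H^2(S,\mathbb{Z}/r\mathbb{Z}),\]
which presents $\{\psi:\psi(\zeta)=1\}$ as an affine space over $H^1(S,\mathbb{Z}/r\mathbb{Z})$, non-empty precisely when $r\mid 2g-2$ because the Euler class $e$ pairs with the fundamental class of $S$ to give $2-2g$. On the other side, the difference $\delta=\phi-\phi'$ of two spin structures vanishes on disk-boundaries and satisfies the \emph{additive} twist relation $\delta(T_c(d))=\delta(d)+\iota(d,c)\delta(c)$, which, combined with the change-of-coordinates principle relating any two homologous simple closed curves by products of twists, forces $\delta$ to descend to a homomorphism $H_1(S,\mathbb{Z}/r\mathbb{Z})\to\mathbb{Z}/r\mathbb{Z}$. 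Hence spin structures also form a torsor over $H^1(S,\mathbb{Z}/r\mathbb{Z})$. Equivariance of $\psi\mapsto\phi$ for these two torsor structures is transparent from the definition, so once well-definedness produces a single element in the image, the map is automatically a bijection between the two non-empty torsors.
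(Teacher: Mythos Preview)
The paper does not prove this theorem; it is quoted from Humphries--Johnson \cite{HJ89} (their Theorems 2.1 and 2.5), as reformulated in \cite{Sa19}. Your proposal supplies an independent argument, so there is nothing to compare against in the paper itself.

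Your well-definedness argument is sound. The normalization axiom is exactly the Poincar\'e--Hopf computation you describe. For twist linearity, the homological identity $[\widetilde{T_c(d)}]=[\tilde d]+\iota(d,c)[\tilde c]$ in $H_1(UTS,\mathbb{Z})$ is correct, and your localization to an annulus is the right setup. The claim that ``no spurious multiple of $\zeta$ appears'' can be made precise by trivializing $UTS$ over the annulus via the vector field transverse to $c$: in that frame the tangent to each twisted strand of $d$ starts and ends at the same angle with net winding zero, while $\tilde c$ itself is a constant section, so the difference cycle projects to $\pm[c]$ with zero fibre component.

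The bijectivity argument has a real gap. You assert that the difference $\delta=\phi-\phi'$ descends to $H_1(S,\mathbb{Z}/r\mathbb{Z})$ because ``the change-of-coordinates principle relating any two homologous simple closed curves by products of twists'' forces it. But the twist relation gives $\delta(d')=\delta(d)+\sum_i\iota(d_i,c_i)\delta(c_i)$ when $d'=T_{c_k}\cdots T_{c_1}(d)$, and the hypothesis $[d']=[d]$ only yields $\sum_i\iota(d_i,c_i)[c_i]=0$ in $H_1$; concluding $\sum_i\iota(d_i,c_i)\delta(c_i)=0$ from this is exactly what you are trying to prove. The standard fix is the lemma (Corollary~2.6 of \cite{HJ89}, invoked later in the paper) that a spin structure is determined by its values on a geometric symplectic basis: define $\eta\in H^1(S,\mathbb{Z}/r\mathbb{Z})$ to agree with $\delta$ on such a basis, observe that $\phi+\eta$ and $\phi'$ are spin structures agreeing on the basis, and conclude $\phi+\eta=\phi'$. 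With that lemma in hand your torsor comparison goes through.
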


There is another interpretation as follows; we refer to p.131 of \cite{H95}
for more information on this construction. 
Given a number $r\geq 2$ which 
divides $2g-2$, 
an application of the Gysin sequence for the Euler class of $UTS$ yields
a short exact sequence
\begin{equation}\label{gysin}
0\to \mathbb{Z}/r\mathbb{Z}\to H_1(UTS,\mathbb{Z}/r\mathbb{Z})\to 
H_1(S,\mathbb{Z}/r\mathbb{Z})\to 0.\end{equation}
By covering space theory, 
an $r$-th root of the tangent bundle of $S$, viewed as a complex line
bundle for some fixed complex structure, is determined by a homomorphism
$H_1(UTS,\mathbb{Z}/r\mathbb{Z})\to  \mathbb{Z}/r\mathbb{Z}$
whose composition with the inclusion 
$\mathbb{Z}/r\mathbb{Z}\to H_1(UTS,\mathbb{Z}/r\mathbb{Z})$ is the identity
and therefore

\begin{proposition}\label{rspin}
There is a natural one-to-one correspondence between the $r$-th roots
of the canonical bundle of $S$ and splittings of the sequence 
(\ref{gysin}).
\end{proposition}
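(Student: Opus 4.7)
The plan is to pass from $r$-th roots of the canonical bundle through connected cyclic covers of $UTS$ to retractions of the inclusion $\mathbb{Z}/r\mathbb{Z}\hookrightarrow H_1(UTS,\mathbb{Z}/r\mathbb{Z})$, which are in turn the same data as splittings of (\ref{gysin}).

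First, fix a complex structure on $S$. Since the canonical bundle $K$ is dual to the tangent bundle $TS$, an $r$-th root of $K$ canonically corresponds to an $r$-th root of $TS$; so it suffices to parametrize complex line bundles $L\to S$ with $L^{\otimes r}\cong TS$. Given such an $L$, its unit circle bundle $UL$ carries an obvious $r$-fold covering map $UL\to UTS$, whose restriction to each fiber of $UTS\to S$ is the standard $r$-fold cover $S^1\to S^1$. Conversely, any connected $r$-fold cyclic cover of $UTS$ whose restriction to every fiber is the standard cover arises this way: the associated complex line bundle has the required tensor power. This sets up a bijection between $r$-th roots of $K$ and isomorphism classes of such $r$-fold covers.

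Second, I would invoke covering space theory to identify such covers with surjective homomorphisms $\rho\colon\pi_1(UTS)\to\mathbb{Z}/r\mathbb{Z}$ (up to a consistent choice of generator of $\mathbb{Z}/r\mathbb{Z}$, which matters because we are tracking $r$-th roots, not mere unoriented covers). Passing to abelianization, these are the same as homomorphisms $\rho\colon H_1(UTS,\mathbb{Z}/r\mathbb{Z})\to\mathbb{Z}/r\mathbb{Z}$. The condition that the cover restricts to the standard cover on each fiber of $UTS\to S$ translates precisely to the condition that $\rho$ restricts to the identity on the $\mathbb{Z}/r\mathbb{Z}$ subgroup appearing as the kernel in (\ref{gysin}); this is the step most directly informed by the reference to p.\,131 of \cite{H95}.

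Third, I would conclude by the standard observation that retractions $\rho$ of an inclusion of abelian groups are in canonical bijection with splittings of the associated short exact sequence. Explicitly, a splitting $\sigma\colon H_1(S,\mathbb{Z}/r\mathbb{Z})\to H_1(UTS,\mathbb{Z}/r\mathbb{Z})$ determines a complementary summand to the kernel $\mathbb{Z}/r\mathbb{Z}$, which determines the projection $\rho$ onto that kernel; conversely, $\ker\rho$ provides a splitting.

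The main obstacle is the first step, namely the verification that the unit circle bundle construction really gives a bijection between $r$-th roots of $TS$ and suitable cyclic covers of $UTS$; the remaining reductions are routine covering space theory and linear algebra over $\mathbb{Z}/r\mathbb{Z}$. Everything is natural, so the composite bijection has no additional choices to track beyond the distinguished generator of $\mathbb{Z}/r\mathbb{Z}$ already fixed by the Gysin sequence.
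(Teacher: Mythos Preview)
Your proposal is correct and follows essentially the same route as the paper. The paper does not give a separate proof of this proposition; its entire argument is the sentence immediately preceding the statement, which invokes covering space theory and the reference to \cite{H95} to say that $r$-th roots of the tangent bundle correspond to homomorphisms $H_1(UTS,\mathbb{Z}/r\mathbb{Z})\to\mathbb{Z}/r\mathbb{Z}$ restricting to the identity on the fiber subgroup, and hence to splittings of (\ref{gysin}). Your three steps are precisely a careful unpacking of that one sentence.
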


A $\mathbb{Z}/2\mathbb{Z}$-spin structure on a compact 
surface $S$ of genus $g$ with empty or connected boundary
has a \emph{parity} which is defined  
as follows.

A \emph{geometric symplectic basis} for $H_1(S,\mathbb{Z})$ 
is a system
$a_1,b_1,\dots,a_g,b_g$ of simple closed curves
on $S$ such that $a_i,b_i$ intersect in a single point and that
$a_i\cup b_i$ is disjoint from $a_j\cup b_j$ for $i\not=j$. 
Then the parity of the spin structure $\phi$ equals
\begin{equation}\label{arf}
{\rm Arf}(\phi)
=\sum_i(\phi(a_i)+1)(\phi(b_i)+1)
  \in \mathbb{Z}/2\mathbb{Z}.\end{equation}
This does not depend on the choice of the geometric symplectic basis.

\subsection{The graph of nonseparating curves with vanishing spin value}

 The \emph{curve graph} ${\cal C\cal G}$ of $S$ is the graph whose vertices
 are \emph{essential} (that is, neither nullhomotopic nor homotopic into
 the boundary) 
simple closed curves in $S$ and where two such curves are connected by an edge
if they can be realized disjointly. We can use the spin structure $\phi$ 
 to introduce various subgraphs of ${\cal C\cal G}$ and study their properties. 
 One of the main technical ingrediences to this end is the following result
 of Salter (Corollary 4.3 of \cite{Sa19}). 
 
 \begin{lemma}[Salter]\label{torus}
  Let $\Sigma\subset S$ be an embedded one-holed torus. Then there exists
  a simple closed curve $c\subset \Sigma$ with $\phi(c)=0$. 
\end{lemma}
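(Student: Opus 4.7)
\emph{Setup and key formula.} On the embedded one-holed torus $\Sigma\subset S$, fix oriented simple closed curves $a,b\subset\Sigma$ meeting transversely in a single point with algebraic intersection $\iota(a,b)=1$, and set $x=\phi(a)$ and $y=\phi(b)$ in $\mathbb{Z}/r\mathbb{Z}$. Every isotopy class of oriented simple closed curves on $\Sigma$ corresponds to a unique primitive class $pa+qb\in H_1(\Sigma,\mathbb{Z})$; denote the corresponding curve by $c_{p,q}$. The central claim driving the proof is the linearity identity
\[\phi(c_{p,q})\equiv px+qy\pmod r\qquad\text{for every primitive } (p,q)\in\mathbb{Z}^2.\]
Granting this identity the lemma follows at once: lift $x,y$ to integers $\tilde x,\tilde y$; if $\tilde x\equiv\tilde y\equiv 0\pmod r$ then $c=a$ already satisfies $\phi(c)=0$, and otherwise setting $e=\gcd(\tilde x,\tilde y)$ (with the convention $\gcd(0,n)=|n|$) and $(p,q)=(\tilde y/e,-\tilde x/e)$ gives a primitive pair with $p\tilde x+q\tilde y=0$ in $\mathbb{Z}$, so $\phi(c_{p,q})\equiv 0\pmod r$.

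\emph{Proof of the key formula.} I induct on the Farey complexity of $(p,q)$. The base cases $(1,0)$ and $(0,1)$ are tautological. For the inductive step, any primitive $(p,q)$ with $|p|+|q|>1$ admits a unique Farey-mediant decomposition $(p,q)=(p',q')+(p'',q'')$ into primitive Farey neighbors with $p'q''-p''q'=\pm 1$; after possibly swapping the two we may assume $\iota(c_{p',q'},c_{p'',q''})=+1$, so that on the torus the homology identity $T_{c_{p'',q''}}(c_{p',q'})=c_{p',q'}+c_{p'',q''}=c_{p,q}$ holds. Twist linearity and the inductive hypothesis on the strictly simpler neighbors then give
\[\phi(c_{p,q})=\phi(c_{p',q'})+\iota(c_{p',q'},c_{p'',q''})\phi(c_{p'',q''})=(p'x+q'y)+(p''x+q''y)=px+qy,\]
completing the induction.

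\emph{Main obstacle.} The genuine work is the orientation bookkeeping: one must orient every curve $c_{p,q}$ so that Farey mediants of Farey neighbors always realise as a \emph{positive} Dehn twist of one about the other, and so that the intersection signs appearing in twist linearity combine to reproduce a purely linear formula without residual correction terms. Once this sign consistency is pinned down the induction runs mechanically and the concluding arithmetic step is immediate.
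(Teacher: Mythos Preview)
Your argument is correct. Note, however, that the paper does not supply its own proof of this lemma: it is quoted as Corollary~4.3 of \cite{Sa19} and used as a black box, so there is no in-paper argument to compare against. What you have written is a self-contained proof standing in for that citation.

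Two small points on the write-up. First, the word \emph{unique} in ``admits a unique Farey-mediant decomposition'' is not literally true (e.g.\ $(2,1)=(1,0)+(1,1)=(3,1)+(-1,0)$, both sums of Farey neighbors), but you only use existence of a decomposition into neighbors each of strictly smaller $|p|+|q|$, and that does hold. Second, your base cases should include $(\pm 1,0)$ and $(0,\pm 1)$, or equivalently you should invoke $\phi(c^{-1})=-\phi(c)$ to pass from the positive Stern--Brocot tree to arbitrary signs; your final arithmetic step produces $(p,q)=(\tilde y/e,-\tilde x/e)$, which typically has entries of opposite sign and so is not reached from $(1,0),(0,1)$ by positive mediants alone.

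If you wish to bypass the Farey combinatorics and the sign bookkeeping you flag as the main obstacle, an equivalent and slightly cleaner route is this: the putative formula $(p,q)\mapsto px+qy$ and the actual function $(p,q)\mapsto\phi(c_{p,q})$ are both equivariant under $T_a$ and $T_b$ (this is exactly twist linearity, checked once for each generator), and since $T_a,T_b$ generate a group acting transitively on primitive vectors in $\mathbb{Z}^2$, agreement at $(1,0)$ forces agreement everywhere. This replaces the Farey induction by induction on word length in $\langle T_a,T_b\rangle$ and makes the orientation issue evaporate.
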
  



Denote by ${\cal C\cal G}_0\subset {\cal C\cal G}$
the complete subgraph of the
curve graph whose vertex set consists of nonseparating curves $c$ with 
$\phi(c)=0$. Note that this is well defined, that is, it is independent of the choice
of an orientation of $c$. 
As a fairly easy consequence of Lemma \ref{torus}
we obtain

\begin{lemma}\label{connected0}
Let $\phi$ be a spin structure on a closed surface of genus $g\geq 3$.
Then ${\cal C\cal G}_0$ is connected.
\end{lemma}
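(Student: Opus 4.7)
The plan is to deduce the connectedness of ${\cal C\cal G}_0$ from Lemma \ref{torus} together with the classical fact that the nonseparating curve graph of a closed surface of genus $g\geq 2$ is connected. The strategy is to take any path in the nonseparating curve graph between two vertices of ${\cal C\cal G}_0$ and to bypass, one step at a time, the curves along this path whose $\phi$-value is nonzero, inserting in their place $\phi=0$ curves produced by Lemma \ref{torus}.

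The first step is a building block: for every nonseparating simple closed curve $c\subset S$ with $g\geq 3$, there exists a nonseparating simple closed curve $c'\in {\cal C\cal G}_0$ disjoint from $c$. Indeed, $S\setminus c$ has genus $g-1\geq 2$, so it contains an embedded one-holed torus $\Sigma$, e.g. a tubular neighborhood of a symplectic pair sitting inside a genus-one handle of $S\setminus c$. Applying Lemma \ref{torus} produces a simple closed curve $c'\subset \Sigma$ with $\phi(c')=0$, which is essential in $\Sigma$ because $\phi$ assigns the value $1$ to any null-homotopic curve. A symplectic pair in $\Sigma$ remains symplectic in $H_1(S)$, so $c'$ has nonzero homology class in $H_1(S)$ and is therefore nonseparating in $S$.

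I would then upgrade this to pairs: for any two disjoint nonseparating curves $c_1,c_2\subset S$ with $g\geq 3$, there exists a nonseparating $e\in {\cal C\cal G}_0$ disjoint from both. An Euler characteristic computation shows that, in both the bounding pair case and the homologically independent case, the complement $S\setminus(c_1\cup c_2)$ has at least one component of positive genus once $g\geq 3$, and the one-holed-torus argument of the building block applies inside that component. Given $a,b\in {\cal C\cal G}_0$ at distance $n\geq 2$ in the nonseparating curve graph of $S$ along a geodesic $a=c_0,c_1,\ldots,c_n=b$, one picks $e\in {\cal C\cal G}_0$ disjoint from both $c_1$ and $c_2$; then $e$ is at distance at most $n-1$ from $b$ in the nonseparating curve graph, and induction on $n$ reduces the problem to connecting $e$ to $b$ inside ${\cal C\cal G}_0$, along with the task of connecting $a$ to $e$.

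The main obstacle is this last reduction: $a$ and $e$ are both nonseparating $\phi=0$ curves which are disjoint from $c_1$ but not necessarily from one another. This amounts to showing that the induced subgraph of ${\cal C\cal G}_0$ on curves disjoint from the fixed nonseparating curve $c_1$ is connected, which is essentially the assertion of the lemma transferred to the subsurface $S\setminus c_1$ of genus $g-1$ with two boundary components. I would handle this by strengthening the induction hypothesis to a relative version for compact surfaces with boundary whose genus is large enough for the one-holed-torus constructions of Steps 1 and 2 to continue to apply, and by checking the smallest-genus base case of this relative statement directly. The recursion then terminates after finitely many steps because each passage to a subsurface cut along a nonseparating curve strictly decreases the genus.
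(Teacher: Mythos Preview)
Your strategy differs from the paper's in its choice of auxiliary graph: you route through the nonseparating curve graph, whereas the paper uses the graph ${\cal S\cal G}$ of \emph{separating} curves (connected for $g\geq 3$ by Masur--Schleimer). This choice is what forces you into a recursion on genus that the paper avoids.

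There is a genuine gap in your reduction step. To connect $a$ and $e$ (both in ${\cal C\cal G}_0$, both disjoint from $c_1$), you propose to apply a relative version of the lemma in $S\setminus c_1$. But the induced subgraph of ${\cal C\cal G}_0$ on curves disjoint from $c_1$ is not the graph ${\cal C\cal G}_0$ of $S\setminus c_1$: a curve nonseparating in $S$ and disjoint from $c_1$ can be \emph{separating} in $S\setminus c_1$ --- exactly when it forms a bounding pair with $c_1$ --- and then it is not a vertex of the relative graph at all. Since $a=c_0$ and $c_1$ are merely disjoint nonseparating curves, nothing rules this out, so your inductive hypothesis need not apply to $a$. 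Separately, the base case of your genus recursion is left unverified and is not a formality: the paper's own remark following this lemma notes that the $g=2$ case is only expected, not established.

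The paper's argument sidesteps all of this. Enclose $a,b$ in one-holed tori with separating boundary curves $c,d$; connect $c$ to $d$ by a path $(c_i)$ in ${\cal S\cal G}$; and inductively carry along a curve $a_j\in{\cal C\cal G}_0$ disjoint from $c_j$. The key point is that an essential separating curve on a closed surface has positive genus on \emph{both} sides, so whenever $a_j$ lies on the same side of $c_j$ as $c_{j+1}$, the other side contains a one-holed torus disjoint from all of $a_j$, $c_j$, $c_{j+1}$, and Lemma~\ref{torus} produces $a_{j+1}$ there. No descent in genus is needed.
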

\begin{proof} We use the following result of Masur-Schleimer
  \cite{MS06}, see Theorem 1.2 of 
\cite{Put08}. Let ${\cal S\cal G}\subset
  {\cal C\cal G}$ be the complete subgraph whose vertex set
  consists of \emph{separating} simple closed curves; then 
${\cal S\cal G}$ is connected. Note that this requires that $g\geq 3$.

Let $a,b$ be vertices of ${\cal C\cal G}_0$. Choose simple closed curves 
$\hat a,\hat b$ which intersect $a,b$ in a single point; such curves exist since $a,b$ are 
nonseparating. Then the boundary $c,d$ of a tubular neighborhood of
$a\cup\hat  a$ and $b\cup \hat b$, respectively, 
is a separating simple closed curve which decomposes $S$ into 
a one-holed torus containing $a,b$ and a surface of genus $g-1\geq 2$ with 
boundary.

Connect $c$ to $d$ by an edge path $(c_i)_{0\leq i\leq k}\subset {\cal S\cal G}$
(here $c=c_0$ and $d=c_k$).  
Construct inductively an 
edge path $(a_i)\subset {\cal C\cal G}_0$ connecting $a=a_0$ to $b=a_k$
such that
for each $i$, $a_{i}$ is disjoint from $c_i$, as follows.
Put $a_0=a$ and 
assume that we constructed already such 
a path for some $j<k$. Then $a_{j}$ is disjoint from $c_j$. 

If $a_{j}$ also is disjoint from $c_{j+1}$ then define $a_{{j+1}}=a_{j}$. Otherwise
$a_{j}$ is contained in the same component
$\Sigma$ of $S-c_j$ as $c_{j+1}$. 
Choose a one-holed torus $T\subset S-\Sigma$. Such a torus exists since
$c_j$ decomposes $S$ into two surfaces of positive genus with connected boundary.  
By Lemma \ref{torus},
this torus contains a 
nonseparating simple closed curve $a_{j+1}$ 
with $\phi(a_{j+1})=0$, and this curve is disjoint from
both $a_{j}$ and $c_{j+1}$.  This yields the induction step.
\end{proof}

\begin{remark}\label{genus2rm}
The proof of Lemma \ref{connected0} extends with a bit more care to compact
surfaces of genus at least 3 with connected boundary. 
We expect that the Lemma also holds true for $g=2$.
  \end{remark}

\subsection{The graph of nonseparating 
curves with spin value $\pm 1$ on a surface of genus $2$}

 Define 
 ${\cal C\cal G}_1$ to be the complete subgraph of ${\cal C\cal G}$
 of all nonseparating simple closed 
 curves $c$ on $S$ with $\phi(c)=\pm 1$. Note that this
 condition does not depend on the orientation of $c$
 and hence it is indeed a condition on the vertices of 
 ${\cal C\cal G}$.
In this subsection we discuss the special case $g=2$.

\begin{proposition}\label{g=2}
Let $\phi$ be an odd $\mathbb{Z}/2\mathbb{Z}$-spin structure 
on a closed surface $S$ of genus $2$. Then  
any two simple closed nonseparating 
curves $c,d$ on $S$ with $\phi(c)=\phi(d)=1$
intersect.
\end{proposition}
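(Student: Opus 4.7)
The plan is to argue by contradiction: assume that $c$ and $d$ are disjoint, non-isotopic, nonseparating simple closed curves on $S$ with $\phi(c)=\phi(d)=1$, and derive a contradiction from the Arf formula (\ref{arf}) by extending $\{c,d\}$ to a geometric symplectic basis of $H_1(S,\mathbb{Z})$.

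First I would rule out the possibility that $c\cup d$ separates $S$. If it did, then $(c,d)$ would form a bounding pair, so $S-(c\cup d)$ would split into two components $U_1,U_2$, each with two boundary circles (one from $c$, one from $d$). Since $\chi(S)=-2$, an Euler characteristic count gives $g(U_1)+g(U_2)=1$, so one of the $U_i$ is a genus-zero surface with two boundary components, i.e., an annulus cobounded by $c$ and $d$. That would force $c$ to be isotopic to $d$, contrary to assumption. Hence $c\cup d$ is nonseparating, and $S-(c\cup d)$ is a four-holed sphere.

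The key step is then to complete $\{c,d\}$ to a geometric symplectic basis. First choose a simple closed curve $b_1$ meeting $c$ once and disjoint from $d$: since $c\cup d$ does not separate, $c$ remains nonseparating in the twice-punctured torus $S-d$, and a dual curve to $c$ exists there. A regular neighborhood $N(c\cup b_1)$ is then a one-holed torus, so its complement $N$ in $S$ is another one-holed torus containing $d$. If $d$ were isotopic to $\partial N$, then $d$ would separate $S$ into two one-holed tori, contradicting nonseparation of $d$; so $d$ is nonseparating in $N$ and admits a dual curve $b_2\subset N$ disjoint from $c\cup b_1$. The collection $(c,b_1,d,b_2)$ is then a geometric symplectic basis.

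Plugging it into (\ref{arf}) yields
\[
{\rm Arf}(\phi)=(\phi(c)+1)(\phi(b_1)+1)+(\phi(d)+1)(\phi(b_2)+1)\equiv 0\pmod 2,
\]
since both $\phi(c)+1$ and $\phi(d)+1$ vanish modulo $2$. This contradicts the odd parity of $\phi$. The main obstacle is the construction of the geometric symplectic basis, which requires careful bookkeeping of which small-genus subsurfaces the auxiliary curves $b_1,b_2$ live in; once that is in hand, the Arf computation is immediate.
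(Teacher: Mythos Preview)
Your proof is correct and follows essentially the same route as the paper: assume $c,d$ are disjoint, observe that a genus~$2$ surface admits no bounding pairs so $S-(c\cup d)$ is connected, extend $\{c,d\}$ to a geometric symplectic basis $(c,b_1,d,b_2)$, and compute ${\rm Arf}(\phi)=0$ from $\phi(c)+1=\phi(d)+1=0$. The only cosmetic difference is that the paper first locates a separating curve $e$ splitting $S$ into two one-holed tori containing $c$ and $d$ respectively and then picks the dual curves inside those tori, whereas you build the dual $b_1$ first and then identify the complementary one-holed torus; the two constructions are equivalent.
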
  
\begin{proof} Let $\phi$ be a $\mathbb{Z}/2\mathbb{Z}$-spin structure on
  $S$. Let 
  $c$ be a nonseparating simple closed curve
  on $S$ with $\phi(c)=1$. 
  Assume that there is a nonseparating simple closed curve
  $d$ with $\phi(d)=1$ which is disjoint from $c$.
  As a surface of genus two does not admit bounding pairs,
  the surface $S-(c\cup d)$ is a four-holed sphere. Thus there exists
  a simple closed separating 
  curve $e$ which decomposes $S$ into two one-holed tori $T_1,T_2$ such that
  $c\in T_1,d\in T_2$.

  Denoting by $\iota$ the mod two homological intersection
  form on $H_1(S,\mathbb{Z}/2\mathbb{Z})$, 
  there are two nonseparating simple closed curves $v\subset T_1,w\subset T_2$ 
  so that
  \begin{equation}\label{intersectionrel}
    \iota(v,c)=1=\iota(w,d)\text{ and }\iota(w,c)=\iota(v,d)=0.
  \end{equation}
  
  The curves $a_1=c,b_1=v,
  a_2=d,b_2=w$ define a geometric symplectic basis 
  for $H_1(S,\mathbb{Z})$. Since $\phi(a_1)=\phi(a_2)=1$, the formula
  (\ref{arf}) for the Arf invariant shows that $\phi$ is even as claimed.
   \end{proof} 


\subsection{$\mathbb{Z}/r\mathbb{Z}$-spin structures 
for $r=2,4$ on 
a surface of genus $g\geq 3$}

In this subsection we study the graph ${\cal C\cal G}_1$ 
for a $\mathbb{Z}/r\mathbb{Z}$-spin structure 
on a surface of genus $g\geq 3$ for $r=2,4$. 
We begin with evoking a result of Salter \cite{Sa19}.

Namely, let $c,d$ be disjoint simple closed curves on the compact
surface $S$. Let $\epsilon$ be an embedded arc in $S$ connecting
$c$ to $d$ whose interior is disjoint from $c\cup d$. 
A regular neighborhood $\nu$ of 
$c\cup \epsilon \cup d$ is homeomorphic to a three-holed
sphere. Two of the boundary components of $\nu$ are
the curves $c,d$ up to homotopy. We choose an orientation
of $c,d$ in such a way that $\nu$ lies to the left. 
The third boundary component $c +_\epsilon d$, oriented in such a
way that $\nu$ is to its right, satisfies 
$[c +_\epsilon d]=[c]+[d]$ where
$[c]$ denotes the homology class of the oriented curve $c$. 
The following is Lemma 3.13 of \cite{Sa19}. 

\begin{lemma}[Salter]\label{add}
$\phi(c+_\epsilon d)=
\phi(c)+\phi(d)+1$.
\end{lemma}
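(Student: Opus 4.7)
The route I would take is via the cohomology-class reformulation of the spin structure given by Theorem \ref{cohomology}. Let $\psi\in H^1(UTS,\mathbb{Z}/r\mathbb{Z})$ be the cohomology class corresponding to $\phi$, so $\phi(\gamma)=\psi(\tilde\gamma)$ for every oriented simple closed curve $\gamma$ on $S$ and $\psi(\zeta)=1$. The key step is a general ``cocycle'' identity: for any embedded compact subsurface $\Sigma\subset S$ whose boundary components $\gamma_1,\dots,\gamma_k$ are oriented so that $\Sigma$ lies to the left,
\begin{equation*}
\tilde\gamma_1+\cdots+\tilde\gamma_k=\chi(\Sigma)\,\zeta \quad \text{in } H_1(UTS,\mathbb{Z}/r\mathbb{Z}),
\end{equation*}
and in particular $\phi(\gamma_1)+\cdots+\phi(\gamma_k)\equiv \chi(\Sigma)\pmod r$.

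To establish this identity I would extend the oriented unit tangents along $\partial\Sigma$ to a smooth tangent vector field $V$ on $\Sigma$; by Poincar\'e--Hopf the zeros of $V$ are isolated with index sum $\chi(\Sigma)$. After excising a small open disk around each zero and normalizing, $V$ becomes a section of $UT\Sigma$ over the punctured surface whose image is a $2$-chain in $UTS$ with boundary $\sum_i\tilde\gamma_i$ minus a signed sum of fiber loops, one per zero and weighted by the local index. Each such fiber loop is homologous to $\pm\zeta$ according to its sign, so the identity follows. Two sanity checks: a disk gives $\tilde c=\zeta$ and the normalization $\phi(c)=1$, while an annulus gives $\tilde c+\tilde c^{-1}=0=\chi(\text{annulus})\,\zeta$, both consistent.

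With this in hand the proof of the lemma is immediate. The regular neighborhood $\nu$ of $c\cup\epsilon\cup d$ is a pair of pants with $\chi(\nu)=-1$, and by the choice of orientations $c$ and $d$ have $\nu$ on the left while $e:=c+_\epsilon d$ has $\nu$ on the right. Hence the three boundary curves oriented with $\nu$ to the left are $c$, $d$ and $e^{-1}$, and combining the cocycle identity with $\phi(e^{-1})=-\phi(e)$ (Lemma~2.2 of \cite{HJ89}) gives
\begin{equation*}
\phi(c)+\phi(d)-\phi(e)\equiv -1\pmod r,
\end{equation*}
which rearranges to $\phi(c+_\epsilon d)=\phi(c)+\phi(d)+1$, as claimed.

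The only non-routine part of this plan is making the Poincar\'e--Hopf argument rigorous inside $UTS$, in particular the sign bookkeeping that identifies the fiber loop around each zero of $V$ with $(\operatorname{ind}_p V)\cdot\zeta$ in $H_1(UTS,\mathbb{Z}/r\mathbb{Z})$. I expect this to reduce to a single local model computation near a nondegenerate zero of $V$ (and a boundary collar check to confirm that the oriented tangent along $\partial\Sigma$ really lifts to $\tilde\gamma_i$); once done, the rest of the argument is purely formal.
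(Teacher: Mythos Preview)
Your argument is correct. The paper itself does not give a proof of this lemma; it simply attributes it to Salter as Lemma~3.13 of \cite{Sa19}. What you have written is essentially a self-contained derivation of that result: you first establish the general identity $\sum_i\phi(\gamma_i)\equiv\chi(\Sigma)\pmod r$ for the boundary curves of any embedded subsurface $\Sigma$ (oriented with $\Sigma$ to the left), and then specialize to the pair of pants $\nu$ with $\chi(\nu)=-1$.

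It is worth noting that the general identity you prove along the way is exactly what the paper repeatedly invokes under the name \emph{homological coherence} (Proposition~3.8 of \cite{Sa19}); see for instance equation~(\ref{pairofpants}) and the several later appeals to it. So your route is not merely correct but recovers a strictly stronger statement that the paper needs elsewhere. Your Poincar\'e--Hopf argument for the Johnson-lift identity $\sum_i\tilde\gamma_i=\chi(\Sigma)\zeta$ in $H_1(UTS,\mathbb{Z}/r\mathbb{Z})$ is the standard one, and the sign bookkeeping you flag as the only delicate point is indeed just a local model check near a nondegenerate zero.
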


As a consequence, if $r=2,4$ then the boundary 
of any embedded pair of pants
$P\subset S$ contains a simple closed curve $c$ with 
$\phi(c)=\pm 1$. To use this fact for our purpose 
 we introduce another graph related to
 simple closed curves on surfaces.

   \begin{definition}\label{nonseparatingpairs}
Let $S$ be a compact surface of genus $g\geq 2$. 
The \emph{graph of non\-separating pairs of pants} ${\cal N\cal S}$ is the graph whose
   vertices are pairs of pants in $S$ whose boundary consists of three pairwise distinct
    nonseparating simple closed curves 
   and where two such pair of pants are connected by an edge if their intersection
   consists of precisely one boundary component. 
\end{definition}
 
By the preceding remark, the graph ${\cal N\cal S}$ can be used to find paths
in the graph ${\cal C\cal G}_1$ provided we can show that it is connected.
To this end we evoke an observation of Putman
(Lemma 2.1 of \cite{Put08})
which we refer to as
the \emph{Putman trick} in the sequel. 

\begin{lemma}[Putman]\label{putmantrick}
  Let $G$ be a graph which admits a vertex transitive isometric
  action of a finitely generated group $\Gamma$ and let
  $v$ be a vertex of $G$. If 
    for each element $s$
    of a finite generating set ${\cal S}$ of $\Gamma$,
    the vertex $v$ can be connected
    to $sv$ by an edge path in $G$, then $G$ is connected.
\end{lemma}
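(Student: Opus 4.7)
The plan is to show that an arbitrary vertex $u$ of $G$ can be connected to $v$ by an edge path. By vertex transitivity, write $u=gv$ for some $g\in\Gamma$, and then factor $g$ as a word in the given generators. The strategy is to concatenate translates of the hypothesized paths from $v$ to $sv$, one per letter in the word, using the fact that $\Gamma$ acts by graph automorphisms so that it sends edge paths to edge paths.

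More concretely, I first observe that the hypothesis extends from ${\cal S}$ to ${\cal S}\cup {\cal S}^{-1}$: if $\gamma$ is an edge path from $v$ to $sv$ for some $s\in {\cal S}$, then applying $s^{-1}\in\Gamma$ and reversing orientation produces an edge path from $v$ to $s^{-1}v$. Then, given any $g\in\Gamma$, I write $g=s_1\cdots s_n$ with each $s_i\in {\cal S}\cup {\cal S}^{-1}$. Inductively, if $\gamma_i$ is an edge path from $v$ to $s_iv$ (guaranteed by the extended hypothesis), then $(s_1\cdots s_{i-1})\gamma_i$ is an edge path from $(s_1\cdots s_{i-1})v$ to $(s_1\cdots s_i)v$. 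Concatenating these paths for $i=1,\dots,n$ yields an edge path from $v$ to $gv=u$.

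Since $u$ was arbitrary, every vertex of $G$ lies in the connected component of $v$, proving that $G$ is connected.

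There is no serious obstacle here: the only point requiring even momentary care is reducing from $s\in {\cal S}\cup {\cal S}^{-1}$ to $s\in {\cal S}$, which uses that $\Gamma$ acts by simplicial automorphisms so that the translate $s^{-1}\gamma$ of an edge path is again an edge path.
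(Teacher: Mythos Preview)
Your proof is correct and is the standard argument. The paper does not give its own proof of this lemma; it is stated as a known result and attributed to Putman (Lemma~2.1 of \cite{Put08}), so there is nothing to compare.
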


We apply the Putman trick to show

\begin{lemma}\label{connectedpants}
The graph of nonseparating pairs of pants is connected.
\end{lemma}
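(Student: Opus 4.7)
My plan is to apply Putman's trick (Lemma \ref{putmantrick}) to the action of ${\rm Mod}(S)$ on ${\cal N}{\cal S}$.

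First I would verify vertex transitivity. Let $P \in {\cal N}{\cal S}$ have boundary components $c_1, c_2, c_3$. Since $P$ has only three boundary curves, the complement $S \setminus {\rm int}(P)$ has at most two connected components; if it had two, then one of them would contain exactly one of the $c_i$ in its boundary, making that $c_i$ separating in $S$, contrary to hypothesis. Hence $S \setminus {\rm int}(P)$ is connected, so $(S, P)$ has a unique topological type, and the change of coordinates principle for the mapping class group yields vertex transitivity of the ${\rm Mod}(S)$-action on ${\cal N}{\cal S}$.

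Next I would fix a base vertex $P_0 \in {\cal N}{\cal S}$ placed inside a standard chain of nonseparating curves in $S$, together with a finite generating set ${\cal S}$ of ${\rm Mod}(S)$ consisting of Dehn twists about nonseparating simple closed curves, arranged so that the defining curves of as many generators as possible are disjoint from $P_0$. By Putman's trick it suffices, for each $T_c \in {\cal S}$, to connect $P_0$ to $T_c P_0$ by an edge path in ${\cal N}{\cal S}$. If $c$ is disjoint from $P_0$, then $T_c$ fixes every boundary curve of $P_0$ up to isotopy, so $T_c P_0 = P_0$ as a vertex and nothing is to be shown.

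For each of the finitely many generators whose defining curve $c$ meets $P_0$, I would construct a bridge pair of pants $Q \in {\cal N}{\cal S}$ sharing exactly one boundary curve $d$ with $P_0$, with $d$ itself disjoint from $c$. Then $d = T_c d$ is also a boundary curve of $T_c P_0$, and by choosing the remaining two boundary curves of $Q$ inside the complementary subsurface $R = S \setminus {\rm int}(P_0)$ to be nonseparating in $S$ and distinct from the $T_c$-images of the remaining boundary curves of $P_0$, one arranges that both $(P_0, Q)$ and $(Q, T_c P_0)$ are edges of ${\cal N}{\cal S}$, giving the required path $P_0 - Q - T_c P_0$.

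The principal difficulty is producing the bridge $Q$ for each bad generator: one must simultaneously achieve the correct combinatorial position with respect to $P_0$, $T_c P_0$ and $c$, and verify that all three boundary curves of $Q$ are nonseparating in $S$ rather than merely in $R$. Since $R$ has genus $g - 2 \geq 1$ (for $g \geq 3$) and three boundary components, there is enough complexity to achieve this by extending $d$ to a chain of nonseparating curves inside $R$ and reading off a suitable pants from the chain.
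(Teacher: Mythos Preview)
Your approach is essentially the paper's: apply the Putman trick to the action of the (pure) mapping class group on ${\cal N\cal S}$, choose the base pair of pants $P_0$ so that only finitely many generators move it, and for each bad generator $T_c$ exhibit a bridge pair of pants giving a length-two path from $P_0$ to $T_cP_0$.

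There is, however, one imprecision in your construction of the bridge $Q$. You arrange only that the shared boundary curve $d$ is disjoint from $c$, and that the other two boundary curves of $Q$ are distinct from the $T_c$-images of the boundary curves of $P_0$. That is not enough for $(Q,T_cP_0)$ to be an edge: the definition requires that $Q$ and $T_cP_0$ \emph{intersect} in exactly one boundary curve, so their interiors must be disjoint, not merely their boundary multicurves distinct. Since $c$ meets $P_0$, the curve $c$ runs through $R=S\setminus{\rm int}(P_0)$, and a pair of pants built generically inside $R$ may well cross $T_cP_0$.

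The clean fix---and this is exactly what the paper does---is to build $Q$ \emph{disjoint from $c$} (not merely $d$ disjoint from $c$). Then $T_cQ=Q$, and applying $T_c$ to the edge $(P_0,Q)$ immediately gives the edge $(T_cP_0,Q)$. Concretely, the paper chooses $P_0$ bounded by three curves from the Humphries system so that each bad generator $c$ is disjoint from at least one boundary curve of $P_0$; then it builds $Q$ inside the complement of $P_0\cup c$, which has genus $g-2\geq 1$, by connecting that boundary curve to a nonseparating curve via an arc. With this adjustment your argument goes through.
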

\begin{proof} If $P\subset S$ is a nonseparating pair of pants,
then $S-P$ is a connected surface of genus $g-2$ with three distinguished
boundary components. Thus the pure mapping class group 
$P{\rm Mod}(S)$ 
of $S$ acts transitively on the vertices of ${\cal N\cal S}$. 
As a consequence, it suffices to show that 
there exists a generating set ${\cal S}$ of $P{\rm Mod}(S)$ and 
a nonseparating pair of pants $P\in {\cal N\cal S}$ which  
can be connected to its image $\psi(P)$
by an edge path in ${\cal N\cal S}$ 
for every element $\psi\in {\cal S}$.

Now $P{\rm Mod}(S)$ can be generated by Dehn twists $T_{c_i}$ 
about the collection of simple closed curves $c_0,\dots,c_k$
shown in Figure 3 
(see Section 4.4 of \cite{FM12}).\\
\begin{figure}[ht]
\begin{center}
\includegraphics[width=0.9\textwidth]{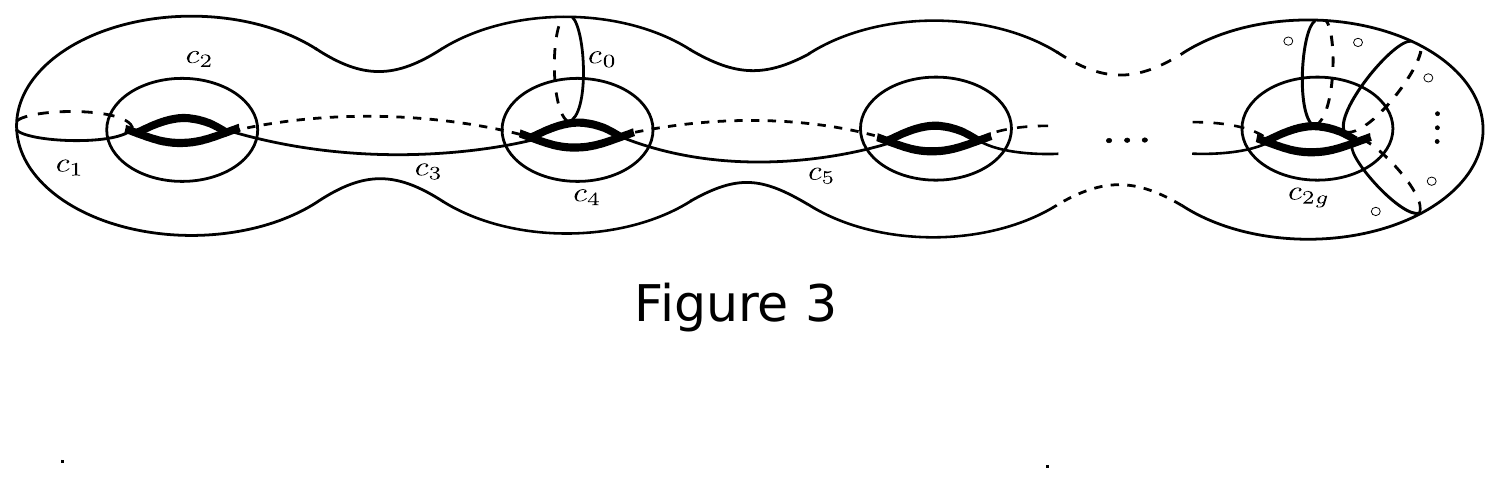}
\end{center}
\end{figure}
Furthermore, the simple closed curves $c_0,c_1,c_3$ 
are nonseparating and bound a pair of pants $P$. This pair of pants
is stabilized by all elements of ${\cal S}$ with the exception of the 
Dehn twists about the simple closed curves $c_2$ and $c_4$. 

As the genus of $S$ is at least $3$, for $i=2,4$ 
the complement $\hat S$ 
in $S$ of the union of $P$ with $c_i$ is a surface of genus
$g-2\geq 1$ with two distinguished boundary components, 
one of which, say the curve $c$, 
is a boundary component of $P$.

The surface $\hat S$ contains a nonseparating simple closed curve 
$d$. As in Lemma \ref{add}, choose an embedded arc $\epsilon\subset \hat S$
connecting $c$ to $d$. A small neighborhood of 
$c\cup \epsilon \cup d$ is a nonseparating pair of pants $\hat P$ whose intersection 
with $P$ equals the curve $c$.
As $\hat P$ is disjoint from $c_i$, it is 
left fixed by the Dehn twist $\psi_i$ about $c_i$ and hence 
$P,\hat P,\psi_i(P)$ is a path of length three in ${\cal N\cal S}$ connecting
$P$ to $\psi_i(P)$. Lemma \ref{connectedpants} now is an immediate
consequence of Lemma \ref{putmantrick}. 
\end{proof}

The following is now an easy consequence of Lemma \ref{connectedpants}.
 
\begin{proposition}\label{genus3}
  Let $r=2,4$ and let $\phi$ be a $\mathbb{Z}/r\mathbb{Z}$
  spin structure on a compact surface $S$ of genus $g\geq 3$, with or without boundary. 
  Then the graph ${\cal C\cal G}_1$ is connected.
\end{proposition}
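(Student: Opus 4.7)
The plan is to reduce the connectedness of ${\cal C\cal G}_1$ to that of the graph ${\cal N\cal S}$ of nonseparating pairs of pants, established in Lemma \ref{connectedpants}. The reduction rests on three observations: (i) for $r\in\{2,4\}$ every vertex of ${\cal N\cal S}$ has at least one boundary curve lying in ${\cal C\cal G}_1$; (ii) every vertex of ${\cal C\cal G}_1$ occurs as a boundary curve of some vertex of ${\cal N\cal S}$; and (iii) traversing an edge of ${\cal N\cal S}$ produces disjointness between the associated boundary curves in ${\cal C\cal G}_1$.

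The first observation is the arithmetic heart of the proof. Let $P\in{\cal N\cal S}$ have boundary $\{\alpha,\beta,\gamma\}$ oriented so that $[\gamma]=[\alpha]+[\beta]$ as in the remark preceding Lemma \ref{add}; then $\phi(\gamma)\equiv\phi(\alpha)+\phi(\beta)+1\pmod r$ by Lemma \ref{add}. If $r=2$, the triple $(\phi(\alpha),\phi(\beta),\phi(\gamma))$ cannot be $(0,0,0)$, so one of the values equals $1$. If $r=4$ and all three values lay in $\{0,2\}$, the right-hand side of the congruence would lie in $\{1,3\}$, a contradiction; hence at least one of the three is $\pm 1$. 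This is exactly where the hypothesis $r\in\{2,4\}$ enters, and it is easy to see that the analogous statement fails for $r\geq 6$.

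For (ii), since ${\rm Mod}(S)$ acts transitively on nonseparating simple closed curves, it suffices to exhibit one vertex of ${\cal N\cal S}$, which is straightforward for $g\geq 3$: pick disjoint nonseparating simple closed curves $\alpha,\beta$ with $[\alpha]+[\beta]\neq 0$, join them by an arc $\eta$ in their connected complement, and take a regular neighborhood of $\alpha\cup\eta\cup\beta$. Observation (iii) is purely topological: if $P_i,P_{i+1}\in{\cal N\cal S}$ share exactly one boundary curve, their union is an embedded four-holed sphere, so any boundary curve of $P_i$ is either disjoint from or equal to any boundary curve of $P_{i+1}$.

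Combining these, given $a,b\in{\cal C\cal G}_1$ I realize them as boundary curves of some $P_a,P_b\in{\cal N\cal S}$ using (ii), connect $P_a$ to $P_b$ by an edge path $P_a=P_0,\dots,P_k=P_b$ in ${\cal N\cal S}$ via Lemma \ref{connectedpants}, and then pick for each $i$ a boundary curve $c_i$ of $P_i$ with $\phi(c_i)=\pm 1$ using (i), taking $c_0=a$ and $c_k=b$. By (iii), consecutive $c_i,c_{i+1}$ are either equal or disjoint; in either case the sequence $c_0,\dots,c_k$ is an edge walk in ${\cal C\cal G}_1$ from $a$ to $b$. The main obstacle is thus concentrated in step (i), a small modular arithmetic fact specific to $r\in\{2,4\}$; the geometric steps (ii) and (iii) are routine applications of the change of coordinates principle and of Lemma \ref{connectedpants}.
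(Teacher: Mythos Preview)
Your argument is correct and follows essentially the same route as the paper: both proofs reduce to the connectedness of ${\cal N\cal S}$ (Lemma~\ref{connectedpants}), use Lemma~\ref{add} to guarantee that for $r\in\{2,4\}$ each nonseparating pair of pants has a boundary curve with $\phi$-value $\pm 1$, and then read off a path in ${\cal C\cal G}_1$ from an edge path in ${\cal N\cal S}$. Your write-up makes the modular arithmetic behind step~(i) and the disjointness in step~(iii) more explicit than the paper does, but the structure is identical.
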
  
 \begin{proof}
Let $c,d$ be nonseparating simple closed curves with $\phi(c)=\pm 1, \phi(d)=\pm 1$.
Choose nonseparating pairs of pants $P,Q$ containing $c,d$ in their boundary. 
By Lemma \ref{connectedpants}, we can connect $P$ to $Q$ by a path in the graph
${\cal N\cal S}$, say the path $(P_i)$ with $P_0=P$ and $P_k=Q$. 

For each $i$ let $c_i$ be a boundary component of $P_i$ with $\phi(c_i)=\pm 1$ and such that
$c_0=c,c_k=d$. Then for each $i$, either $c_i=c_{i+1}$ or $c_i$ and $c_{i+1}$ are disjoint. 
Thus $(c_i)$ is a path in ${\cal C\cal G}_1$ connecting $c$ to $d$. This shows the proposition.
 \end{proof}

\subsection{$\mathbb{Z}/r\mathbb{Z}$-spin structures on a surface of genus 
$g\geq 4$} \label{allr}

In this subsection we investigate the graph ${\cal C\cal G}_1$ on a surface
of genus $g\geq 4$ for an arbitrary $r\geq 2$. To show connectedness
we use the following auxiliary graph 
${\cal P\cal S}$. The vertices of
${\cal P\cal S}$ are pairs of disjoint separating curves
$(c,d)$ which each decompose $S$ into a surface of genus 
$g-1$ and a one-holed torus.
Thus $S-(c\cup d)$ is the disjoint union of two
one-holed tori and a surface of genus $g-2$. 
Two such pairs $(c_1,d_1)$ and $(c_2,d_2)$
are connected by an edge if up to renaming,
$c_1=c_2$ and $d_2$ is disjoint from $c_1,d_1$. Then
$S-(c_1\cup d_1\cup d_2)$ is the disjoint union of a surface of
genus $g-3$ with at least three holes and three one-holed tori. In particular,
the graph ${\cal P\cal S}$ is only defined if the genus of $S$ is at least three.

We use the Putman trick to show

\begin{lemma}\label{connected5}
  For a compact surface $S$ of genus $g\geq 4$, perhaps with boundary, 
 the graph ${\cal P\cal S}$ is a connected 
  ${\rm Mod}(S)$-graph.
\end{lemma}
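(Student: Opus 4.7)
The plan is to apply the Putman trick (Lemma \ref{putmantrick}) to the action of ${\rm Mod}(S)$ on ${\cal P\cal S}$. Vertex transitivity is immediate from the change-of-coordinates principle: for any two vertices $(c_i,d_i)$ the complements $S\setminus(c_i\cup d_i)$ each consist of two one-holed tori and a connected surface of genus $g-2$ whose boundary comprises $c_i\cup d_i$ together with the components of $\partial S$, so a mapping class carries one configuration to the other.

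I shall choose a Dehn-twist generating set for ${\rm Mod}(S)$ modeled on the Humphries system of Figure 3, together with a base vertex $v_0=(c,d)$ adapted to it: take $c$ to bound the one-holed torus containing the ``first handle'' of the system and $d$ to bound the symmetric torus at the ``last handle.'' With this choice, the defining curve of every generating twist, with the exception of the two ``connecting'' curves which cross $c$ respectively $d$, is disjoint from $c\cup d$; the corresponding Dehn twists then fix $v_0$ pointwise, and the Putman condition for them is trivial.

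The two nontrivial cases are symmetric, so it suffices to treat a generator $T_\gamma$ where $\gamma$ meets $c$ but is disjoint from $d$. Decomposing $S\setminus(c\cup d)=T_c\sqcup T_d\sqcup R$, the piece $R$ has genus $g-2$ and contains $\gamma\cap R$ as a finite collection of arcs with endpoints on $c$. The crux is to produce a simple closed curve $c'\subset\mathrm{int}(R)$ disjoint from $\gamma$ and bounding a one-holed torus inside $R$. Granting such $c'$, both $c'$ and $d$ are fixed pointwise by $T_\gamma$; a direct check of the edge relation then shows that $(c,d)\sim(c',d)\sim(T_\gamma c,d)=T_\gamma(v_0)$ is a path of length two in ${\cal P\cal S}$, both adjacencies sharing the curve $d$. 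The needed disjointness relations for the second edge reduce via $T_\gamma$-invariance of $c'$ and $d$ to the facts that $c$ is disjoint from $c'$ and from $d$.

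The principal obstacle is thus the existence of the curve $c'$, a purely topological question. Cutting $R$ along the finitely many arcs of $\gamma\cap R$ preserves Euler characteristic, and since $\gamma$ is a single simple closed curve the number of these arcs is bounded; the resulting (possibly disconnected) surface has total genus at least $g-3$. The hypothesis $g\geq 4$ is precisely what guarantees that this residual genus is at least one, so that some component contains a one-holed torus whose boundary can serve as $c'$. This genus bookkeeping is the single place where the assumption $g\geq 4$ enters the argument.
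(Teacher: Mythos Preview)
Your approach is essentially the paper's: apply the Putman trick with the Humphries generators, choose a base vertex $(c,d)$ so that every generating curve meets at most one of $c,d$, and for a generator $T_\gamma$ with $\gamma\cap c\neq\emptyset$ produce an auxiliary curve $e$ (your $c'$) bounding a one-holed torus disjoint from $c,d,\gamma$, giving the length-two path $(c,d)\to(e,d)\to(T_\gamma c,d)$.

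There is one imprecision you should tighten, and it sits exactly at the step you flag as the crux. Your claim that cutting $R$ along $\gamma\cap R$ leaves total genus at least $g-3$ requires that $\gamma\cap R$ consist of a \emph{single} arc; cutting along $n$ arcs can lower genus by as much as $n$, and ``since $\gamma$ is a single simple closed curve the number of these arcs is bounded'' does not by itself give the bound $1$. What is needed (and what the paper states explicitly) is that for the specific choice of base vertex, each Humphries generator that meets $c$ does so in exactly two points, because it intersects exactly one of the two curves whose regular neighborhood $c$ bounds, and that in a single point. Then $\gamma\cap R$ is one arc, the genus drops by at most one, and $g\ge 4$ gives the required positive-genus component. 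The paper packages this slightly differently, observing that a neighborhood $Y$ of $T_c\cup\gamma$ is a two-holed torus and computing that $S\setminus(Y\cup T_d)$ has genus $g-3$, but the content is the same.

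A minor point: depending on where the extra Humphries curve $c_0$ sits, more than two generating curves may meet $c\cup d$ (the paper's choice has $c_4,c_7$ both meeting $b$, for instance). This does not affect the argument, since each is handled identically, but your phrase ``the two connecting curves'' undercounts in general.
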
  
\begin{proof} The mapping class group ${\rm Mod}(S)$ of the surface $S$ 
clearly acts
  on ${\cal P\cal S}$, furthermore this action is vertex transitive.
  Namely, for any two vertices $(a_1,b_1)$ and $(a_2,b_2)$
  of ${\cal P\cal S}$, the complement
  $S-(a_i\cup b_i)$ is the union of two one-holed tori and a surface
  of genus $g-2$ with $k+2$ boundary components 
where $k\geq 0$ is the number of boundary components of $S$.  
  Hence 
  there exists $\phi\in {\rm Mod}(S)$ with $\phi(a_1,b_1)=(a_2,b_2)$.

  Consider again the curve system ${\cal H}$ 
  shown in Figure 3 with the
  property that the Dehn twists about these curves
  generate the mapping class group. Choose a pair of
  disjoint separating simple closed curves
  $(a,b)$ which decompose $S$ into a surface of genus
  $g-1$ and a one-holed torus $X(a), X(b)$ and such that 
  a curve $c\in {\cal H}$ intersects at most one of the
  curves $a,b$. If it intersects
  one of the curves $a,b$, then this intersection
   consists of precisely two points.
   For example, we can choose $a$ to be the
   boundary of a small neighborhood of $c_1\cup c_2$, and 
   $b$ to be the boundary of a small neighborhood of $c_5\cup c_6$.
   
   Now let $c\in {\cal H}$ and let $T_c$ be the left Dehn twist about
   $c$. If $c$ is disjoint from $a\cup b$, then $T_c(a,b)=(a,b)$ and
   there is nothing to show. Thus assume that $c$ intersects $a$.
   
   The image $T_c(a)$ of $a$ is a separating simple closed curve
   contained in a small neighborhood $Y$ of $X(a)\cup c$. 
   By assumption on $c$,
   this surface is a two-holed torus disjoint from $b$. 
   As $g\geq 4$, the genus of $S-(Y\cup X(b))$ is at least one
   and hence there is a separating curve
   $e\subset S-(Y\cup X(b))$ 
   which decomposes $S-(Y\cup X(b))$ into a one-holed
   torus and a surface $S^\prime$. But this means that
   $(a,b)$ can be connected to $T_c(a,b)=(T_ca,b)$ by the edge path
   $(a,b)\to (e,b)\to (T_ca,b)$. As the roles of $a$ and $b$ can
   be exchanged, the lemma now follows from 
   the Putman trick.
   \end{proof}  

We shall use another auxiliary graph which is defined as follows.

\begin{definition}\label{nonseparc}
Let $S$ be 
a compact surface $S$ of genus $g\geq 1$ with two distinguished
boundary components $A_1,A_2$. The \emph{nonseparating arc graph}
is the graph whose vertices are isotopy classes of 
embedded arcs in $S$ connecting 
$A_1$ to $A_2$. The endpoints of an arc may move freely along the boundary
circles $A_1,A_2$ in such an isotopy class. 
Two such arcs $\epsilon_1,\epsilon_2$ 
are connected by an edge if 
$\epsilon_1,\epsilon_2$ are disjoint and $S-(\epsilon_1\cup \epsilon_2)$ is connected.
\end{definition}

We apply the Putman trick to show

\begin{lemma}\label{connectedarc}
The nonseparating arc graph ${\cal A}(A_1,A_2)$ on a 
compact surface $S$ of genus $g\geq 1$ with two distinguished
boundary components $A_1,A_2$ is connected.
\end{lemma}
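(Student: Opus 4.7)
The plan is to apply the Putman trick (Lemma \ref{putmantrick}) to the natural action on $\cal{A}(A_1,A_2)$ of the finitely generated mapping class group $\Gamma$ of $S$ preserving $A_1$ and $A_2$ individually. Vertex transitivity follows from the change of coordinates principle: for any nonseparating arc $\epsilon$ from $A_1$ to $A_2$, the cut surface $S|\epsilon$ is connected, of genus $g$, and has one fewer boundary component than $S$, since the circles containing $A_1,A_2$ merge into a single boundary circle built from the boundary arcs of $A_1,A_2$ and the two sides of $\epsilon$. Hence the homeomorphism type of the pair $(S,\epsilon)$ depends only on $S$, and any two nonseparating arcs from $A_1$ to $A_2$ lie in the same $\Gamma$-orbit.

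For the Putman trick proper, fix a base vertex $\epsilon_0$ and a finite generating set $\{T_{c_j}\}$ of $\Gamma$ consisting of Dehn twists about simple closed curves $c_j$. If $c_j$ is disjoint from $\epsilon_0$, then $T_{c_j}(\epsilon_0)=\epsilon_0$ and there is nothing to show. If $c_j$ intersects $\epsilon_0$, it suffices to produce one auxiliary arc $\epsilon_1\in \cal{A}(A_1,A_2)$ which is disjoint from $\epsilon_0\cup c_j$ and for which $S-(\epsilon_0\cup \epsilon_1)$ is connected. Indeed, since $\epsilon_1$ is disjoint from $c_j$ we have $T_{c_j}(\epsilon_1)=\epsilon_1$, and applying the homeomorphism $T_{c_j}$ to the connected set $S-(\epsilon_0\cup \epsilon_1)$ shows that $S-(T_{c_j}(\epsilon_0)\cup \epsilon_1)$ is also connected. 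Hence $\epsilon_0,\epsilon_1,T_{c_j}(\epsilon_0)$ is an edge path in $\cal{A}(A_1,A_2)$, and the lemma will follow from Lemma \ref{putmantrick}.

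The main technical obstacle is the existence of such an $\epsilon_1$. Cutting $S$ along the nonseparating arc $\epsilon_0$ yields a connected surface $S|\epsilon_0$ of genus $g\geq 1$ with a distinguished boundary circle $A$ containing arcs $A_1^\circ, A_2^\circ$ corresponding to $A_1$ and $A_2$; the curve $c_j$ descends to a disjoint union of a simple closed curve and/or properly embedded arcs in $S|\epsilon_0$. The desired $\epsilon_1$ corresponds to a properly embedded arc in $S|\epsilon_0$ connecting $A_1^\circ$ to $A_2^\circ$, disjoint from the image of $c_j$, and nonseparating in $S|\epsilon_0$ (the latter condition being equivalent to $S-(\epsilon_0\cup\epsilon_1)$ being connected). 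Using the assumption $g\geq 1$, we can and will choose the base arc $\epsilon_0$ and a Humphries-type generating set so that for each $c_j$ the complement $S|\epsilon_0 - c_j$ contains a handle adjacent to $A$ through which $\epsilon_1$ can be routed, in the spirit of the construction used in the proof of Lemma \ref{connectedpants}.
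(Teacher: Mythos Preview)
Your approach is the same as the paper's: apply the Putman trick to the action of the pure mapping class group, using a Humphries-type generating set of Dehn twists and an auxiliary arc to build a length-two edge path from $\epsilon_0$ to $T_{c_j}(\epsilon_0)$.

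The difference is one of execution. The paper sharpens the final step by choosing the base arc $\epsilon_1$ so that it meets exactly \emph{one} of the Humphries curves (say $c_1$) in a single point and is disjoint from all the others; simultaneously it fixes a single companion arc $\epsilon_2$ disjoint from both $\epsilon_1$ and $c_1$, with $\epsilon_1\cup\epsilon_2$ projecting to a nonseparating simple closed curve in the capped-off surface. Then $T_{c_i}\epsilon_1=\epsilon_1$ for $i\ge 2$, and the only nontrivial case is handled by the fixed path $\epsilon_1,\epsilon_2,T_{c_1}\epsilon_1$. Your version instead promises, for \emph{each} $c_j$ meeting $\epsilon_0$, to route an auxiliary arc through a spare handle in $S\vert\epsilon_0 - c_j$; this is plausible but your last paragraph is a sketch rather than a construction. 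Adopting the paper's choice of base vertex collapses the work to a single explicit case and removes the need for the cut-open analysis you outline.
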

\begin{proof}
Clearly the pure mapping class group $P{\rm Mod}(S)$ 
of $S$ acts transitively on 
the vertices of ${\cal A}(A_1,A_2)$, so it suffices to show that 
there exists a generating set ${\cal S}$ of $P{\rm Mod}(S)$ and 
an arc $\epsilon\in {\cal A}(A_1,A_2)$ which  
can be connected to its image $\psi(\epsilon)$
by an edge path in ${\cal A}(A_1,A_2)$ 
for every element $\psi\in {\cal S}$.

There exists two disjoint arcs $\epsilon_1,\epsilon_2$ connecting
$A_1$ to $A_2$ such that $\epsilon_1\cup \epsilon_2$ projects to 
an essential 
nonseparating simple closed curve in the surface obtained from $S$
by capping off the boundary components $A_1,A_2$. Furthermore,
we may assume that 
$\epsilon_1$ intersects one of the curves shown in Figure 3, say the
curve $c_1$, in a single point and is disjoint from the remaining
curves, and $c_1$ is disjoint from $\epsilon_2$.

Then $T_{c_i}\epsilon_1=\epsilon_1$ for $i\geq 2$, and 
$\epsilon_1$ can be connected to $T_{c_1}(\epsilon_1)$ 
by the edge path 
$\epsilon_1,\epsilon_2,T_{c_1}\epsilon_1$. By the Putman trick
this implies that ${\cal A}(A_1,A_2)$ is connected.
\end{proof}

We are now ready to show

\begin{proposition}\label{connected6}
Let $\phi$ be an $r$-spin structure $(r\geq 2)$ 
on a compact surface 
$S$ of genus $g\geq 4$. Then 
the graph ${\cal C\cal G}_1$ is connected.
\end{proposition}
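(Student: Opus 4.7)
The plan is to deduce connectivity of $\cal{CG}_1$ from connectivity of $\cal{PS}$ (Lemma~\ref{connected5}), by associating to each vertex of $\cal{PS}$ a spin-$\pm 1$ ``middle-surface representative'' in $\cal{CG}_1$ and lifting paths in $\cal{PS}$ to paths in $\cal{CG}_1$ via a local adjacency criterion.

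I would first establish the \emph{local adjacency criterion}: for any $(a,b) \in \cal{PS}$, decomposing $S$ into the two one-holed tori $T_a, T_b$ and a middle surface $M$ of genus $g-2 \geq 2$, any two nonseparating simple closed curves $\gamma,\gamma'$ with $\phi(\gamma),\phi(\gamma')=\pm 1$ lying in two distinct pieces of $\{T_a,M,T_b\}$ are adjacent in $\cal{CG}_1$. Disjointness is immediate, and since $a,b$ are separating (hence nullhomologous in $S$), the images in $H_1(S,\mathbb{Z}/2\mathbb{Z})$ of the $H_1$ of each piece lie in independent summands, so $[\gamma]+[\gamma']\neq 0$ and $S\setminus(\gamma\cup\gamma')$ is connected.

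I would next construct for every $(a,b)\in\cal{PS}$ a \emph{middle-surface representative} $m\in M$ with $\phi(m)=\pm 1$: since $M$ has genus $\geq 2$, it contains two disjoint one-holed subtori $T_1,T_2$, each with separating boundary in $S$; Lemma~\ref{torus} gives nonseparating spin-$0$ curves $c_i\in T_i$, and Lemma~\ref{add} then produces $m:=c_1+_\epsilon c_2\subset M$ (with arc $\epsilon\subset M$) satisfying $\phi(m)=1$. Moreover $[m]=[c_1]+[c_2]\neq 0$ in $H_1(S,\mathbb{Z}/2\mathbb{Z})$ by the same independence argument as above, so $m$ is nonseparating in $S$.

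Given $c,d\in\cal{CG}_1$, I would anchor each to a vertex of $\cal{PS}$ with the curve contained in one of the one-holed tori (possible since $g\geq 4$: take a separating curve around $c$ together with an intersecting curve, then one further separating curve in the genus-$(g-1)\geq 3$ complement cutting off a second one-holed torus). Using Lemma~\ref{connected5}, connect the anchors by a path $V_0,\dots,V_n$ in $\cal{PS}$, and at each $V_i$ take the representative $m_i$. The local adjacency criterion immediately links $c\sim m_0$ and $m_n\sim d$. At each transition $V_{i-1}\to V_i$, the shared separating curve gives a shared one-holed torus $T_s$; a spin-$\pm 1$ nonseparating intermediary curve $p_i$ built by a further pants-sum construction supported in a neighborhood of $T_s$ is disjoint from $m_{i-1}\in M_{i-1}$ and $m_i\in M_i$ and, by the local adjacency criterion applied in the decompositions for $V_{i-1}$ and for $V_i$, is adjacent to both, closing up the path.

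The main obstacle is the transition step: for general $r$ a one-holed torus need not admit an interior nonseparating curve of spin value $\pm 1$, so the intermediary $p_i$ cannot always be chosen strictly inside $T_s$. This is circumvented by allowing $p_i$ to live in a pair of pants straddling $T_s$ and its surrounding subsurface, using Lemma~\ref{add} with one spin-$0$ curve inside $T_s$ (from Lemma~\ref{torus}) and another in the adjacent piece. The crucial quantitative input is the hypothesis $g\geq 4$, which guarantees that every middle surface has genus $\geq 2$, that the complement of every one-holed torus used above has genus $\geq 3$, and hence that all pants-sum constructions admit disjoint room to yield curves of the desired spin value, homology class, and relative position.
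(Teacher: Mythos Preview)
Your overall scaffolding matches the paper's: both arguments reduce to connectivity of $\mathcal{PS}$ (Lemma~\ref{connected5}) and lift a path in $\mathcal{PS}$ to a path in $\mathcal{CG}_1$ by associating to each vertex a spin-$\pm 1$ representative. But the placement of the representative is genuinely different. You put $m_i$ in the \emph{middle} piece $M_i$ of genus $g-2$, whereas the paper's $\Lambda(c_i,d_i)$ lives either inside one of the two one-holed tori (Case~1) or is a pants-sum of spin-$0$ curves drawn from \emph{both} tori, joined by an arc through the middle (Case~2). The paper then controls the transition in Case~2 by sliding this arc using the nonseparating arc graph (Lemma~\ref{connectedarc}), a tool your outline does not invoke.

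The gap is exactly the transition step you flag. Suppose the shared torus $T_s$ contains no curve of spin value $\pm 1$ (which occurs for general $r$). Your intermediary $p_i$ must then leave $T_s$, so part of $p_i$ lies in the middle piece of the decomposition for $V_{i-1}$ and part lies in the middle piece for $V_i$ --- the very pieces containing $m_{i-1}$ and $m_i$. Your local adjacency criterion no longer applies (it requires $p_i$ to sit entirely in a piece \emph{distinct} from the one holding the $m$-curve), and a direct disjointness argument fails for the same reason: the portion of $p_i$ outside $T_s$ has no a~priori reason to miss $m_{i-1}$ or $m_i$, since these were chosen independently in overlapping regions of $S\setminus T_s$. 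The closing appeal to ``$g\ge 4$ guarantees disjoint room'' does not settle this: at $g=4$ the common refinement $S\setminus(T_s\cup T_{d_{i-1}}\cup T_{d_i})$ has genus only $1$, and both $m_{i-1}$ and $m_i$ may already pass through it. What the paper supplies here --- and what your sketch lacks --- is a mechanism (the arc-graph connectivity of Lemma~\ref{connectedarc}) to deform the representative along a controlled path of disjoint spin-$\pm 1$ curves until it clears the next torus; without an analogue of that, the transition is incomplete.
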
  
\begin{proof}
Let $S$ be a compact surface of genus $g\geq 2$ and 
  consider the graph ${\cal P\cal S}$. To each of its vertices,
  viewed as a disjoint pair 
  $(c,d)$ of separating simple closed curves, 
  we associate in a non-deterministic way a vertex 
  $\Lambda(c,d)$ of ${\cal C\cal G}_1$ as follows. 

Denote by $\Sigma_c, \Sigma_d$ the one-holed torus bounded by $c,d$.
If one of the tori $\Sigma_c,\Sigma_d$
contains a simple closed curve $a$ with $\phi(a)=\pm 1$ then 
define $\Lambda(c,d)=a$.

Now assume that
none of the tori $\Sigma_c,\Sigma_d$ contains a simple closed
curve $a$ with $\phi(a)=\pm 1$. By Lemma \ref{torus}, 
  there are simple closed nonseparating curves
  $a\subset \Sigma_c,b\subset \Sigma_d$ so that $\phi(a)=0=\phi(b)$.
  Since the tori $\Sigma_c,\Sigma_d$ are disjoint,
  the pair $(a,b)$ is nonseparating, that is, $S-(a\cup b)$ is connected.
  Choose an embedded arc $\epsilon$ in $S$ 
  connecting $a$ to $b$.
  By Lemma \ref{add},
  the curve $\Lambda(c,d)=a+_\epsilon b$
  satisfies $\phi(a+_\epsilon b)=\pm 1$, furthermore it is nonseparating.

  Let $a$ be any vertex of ${\cal C\cal G}_1$ and let $b$ be any simple closed curve
  which intersects $a$ in a single point. 
  Such a curve exists since $a$ is nonseparating. 
  Then
  a tubular neighborhood of $a\cup b$ is a torus containing $a$.
  Let $c$ be the boundary curve of this torus and choose
  a second separating simple closed curve $d$ so that
  $(c,d)\in {\cal P\cal S}$.

Let $e\in {\cal C\cal G}_1$ be another vertex.
Construct as above a vertex $(p,q)\in {\cal P\cal S}$
so that $e$ is contained in 
the one-holed torus cut out by $p$.
Connect $(c,d)$ to $(p,q)$ by an
edge path $(c_i,d_i)_{0\leq i\leq k}$ in ${\cal P\cal S}$.
We use this edge path to construct an edge path $(a_j)\subset 
{\cal C\cal G}_1$ connecting $a$ to $e$ which passes through
suitable choices $a_{j_i}$ $(i\leq k)$ 
of the curves $\Lambda(c_i,d_i)$.

Define $a_0=a$ and by induction, 
let us assume that we constructed already the path $(a_j)_{0\leq j\leq j_i}$ 
for some $i\geq 0$.
We distinguish two cases.

{\sl Case 1:} One of the tori $\Sigma_{c_i},\Sigma_{d_i}$ contains
a curve $f$ with $\phi(f)=\pm 1$.

By construction, in this case we may assume by renaming that 
$f=a_{j_i}\subset \Sigma_{c_i}$. 

If $c_i\in \{c_{i+1},d_{i+1}\}$ then define
$a_{j_i+1}=a_{j_{i+1}}=a_{j_i}=\Lambda(c_{i+1},d_{i+1})$
and note that this is consistent with
the requirements for the induction step.

Thus we may assume now that $c_i\not\in \{c_{i+1},d_{i+1}\}$.
If one of the tori
$\Sigma_{c_{i+1}},\Sigma_{d_{i+1}}$, say the torus $\Sigma_{c_{i+1}}$, 
contains a curve $h$ with $\phi(h)=\pm 1$, then
as $\Sigma_{c_i}$ is disjoint from $\Sigma_{c_{i+1}}$,
the curve $h$ is disjoint from $a_{j_i}$ and we can define 
$a_{j_i+1}=h=
a_{j_{i+1}}=\Lambda(c_{i+1},d_{i+1})$.

Thus assume that neither 
$\Sigma_{c_{i+1}}$ nor $\Sigma_{d_{i+1}}$ contains such a curve. 
Since $\Sigma_{c_i}$ and $\Sigma_{c_{i+1}},\Sigma_{d_{i+1}}$ are pairwise
  disjoint, we can find an embedded arc $\epsilon$ in
  $S-\Sigma_{c_i}$ connecting
  a simple closed 
  curve $u\subset \Sigma_{c_{i+1}}$ with $\phi(u)=0$ to a curve
  $h\subset \Sigma_{d_{i+1}}$ with $\phi(h)=0$.  
We then can define $a_{j_i+1}=u+_\epsilon h=\Lambda(c_{i+1},d_{i+1})
=a_{j_{i+1}}$.

{\sl Case 2:} None of the tori $\Sigma_{c_i},\Sigma_{d_i}$ contains
a curve $f$ with $\phi(f)=\pm 1$.

In this case there are simple closed curves
$f\subset \Sigma_{c_i},h\subset \Sigma_{d_i}$
with $\phi(f)=\phi(h)=0$, and there
is an embedded
arc $\epsilon$ connecting $f$ to $h$ so that
\[a_{j_i}=\Lambda(c_{i+1},d_{i+1})=f+_\epsilon h.\] 
Assume without loss of generality that $d_i=d_{i+1}$. 

Let us in addition assume for the moment that 
the arc
$\epsilon$ is disjoint from $c_{i+1}$. If furthermore
there exists a simple closed curve $u\subset
\Sigma_{c_{i+1}}$ with $\phi(u)=\pm 1$, then this curve is
a choice for $\Lambda(c_{i+1},d_{i+1})$
which is disjoint from $a_{j_i}$ and we are done.

Otherwise 
cut $S$ open along the simple closed curve 
$h\subset \Sigma_{d_i}=\Sigma_{d_{i+1}}$ and let $H_1,H_2$
be the two boundary components of $S-h$. By renaming, 
assume without loss
of generality that $\epsilon$ connects the boundary component $H_1$ to 
the curve $f$, i.e. it leaves the curve $h$ from the side
corresponding to $H_1$.
Now note that $M=S-h-\epsilon-\Sigma_{c_i}$ is a
connected surface of genus $g-2\geq 2$ with two distinguished 
boundary
circles, one of which is the curve $H_2$, and 
$M\supset \Sigma_{c_{i+1}}$. Therefore
there exists an embedded arc $\epsilon^\prime\subset M$ connecting
$H_2$ to a simple closed curve $u\subset \Sigma_{c_{i+1}}$ with
$\phi(u)=0$. Define $a_{j_i+1}=h+_{\epsilon^\prime}u$ and note that
this definition is consistent with all requirements.
This construction completes the induction step
under the additional assumption that arc $\epsilon$ is disjoint from
$c_{i+1}$.

We are left with the case that 
$\epsilon$ is \emph{not} disjoint from
$\Sigma_{c_{i+1}}$. Cut $S$ open along $f\cup h$ and note that
the resulting surface $Z$ has genus $g-2\geq 2$ and four 
distinguished boundary components,
say the components $F_1,F_2,H_1,H_2$. Assume that $\epsilon$
connects $F_1$ to $H_1$.

Consider the nonseparating arc  graph ${\cal A}(F_1,H_1)$ 
in $Z$ of arcs connecting 
$F_1$ to $H_1$. By Lemma \ref{connectedarc}, 
this graph is 
connected. Let $\epsilon_i$ be a path in ${\cal A}(F_1,H_1)$ 
which connects $\epsilon$ to an arc $\epsilon^\prime$ 
disjoint from
$\Sigma_{c_{i+1}}$. For any two consecutive of such arcs, say the arcs
$\epsilon_j,\epsilon_{j+1}$, the surface $Z-(\epsilon_1\cup \epsilon_2)$ is connected and 
hence we can find a disjoint arc $\delta_j$ connecting
$F_2$ to $H_2$. The
curves $f+_{\epsilon_j}h,f+_{\delta_j}h,
f+_{\epsilon_{j+1}}h$ are disjoint and yield
a path in ${\cal C\cal G}_1$ 
connecting $f+_\epsilon h$ to a curve $f+_{\epsilon^\prime} h$ which is 
disjoint from $\Sigma_{c_{i+1}}$.
We then can apply the construction for the case that
the arc connecting $f$ to $h$ is disjoint from $\Sigma_{c_{i+1}}$. 
This completes the proof of the proposition.
\end{proof}

For technical reasons we need a stronger version of
Proposition \ref{genus3} and Proposition \ref{connected6}.
Consider a 
$\mathbb{Z}/r\mathbb{Z}$-spin structure $\phi$ 
on a compact surface $S$ of genus $g$ (with or without boundary) 
for an arbitrary number $r\geq 2$. 
We introduce another graph 
${\cal C\cal G}_1^+$ as follows. The vertices of 
${\cal C\cal G}_1^+$ coincide with the vertices
of ${\cal C\cal G}_1$. Any two such
vertices $c,d$ are connected by an edge if $c,d$
are disjoint and if furthermore 
$S-(c\cup d)$ is connected. Thus ${\cal C\cal G}_1^+$ is 
obtained from ${\cal C\cal G}_1$ by removing some of the edges. 
In particular, if ${\cal C\cal G}_1^+$ is connected
then then same holds true for
${\cal C\cal G}_1$. We use connectedness of ${\cal C\cal G}_1$ to establish
connectedness of ${\cal C\cal G}_1^+$. 

\begin{lemma}\label{next}
If the genus $g$ of $S$ is at least 3 then 
the graph ${\cal C\cal G}_1^+$ is connected provided that
  ${\cal C\cal G}_1$ is connected.  
\end{lemma}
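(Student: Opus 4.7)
The plan is to show that every edge of ${\cal C\cal G}_1$ can be replaced by a path in ${\cal C\cal G}_1^+$; together with the hypothesis that ${\cal C\cal G}_1$ is connected, this immediately yields connectedness of ${\cal C\cal G}_1^+$. Given an edge $(c,d)$ of ${\cal C\cal G}_1$, if $S \setminus (c \cup d)$ is connected then $(c,d)$ already is an edge of ${\cal C\cal G}_1^+$ and nothing is needed. Otherwise $\{c,d\}$ forms a bounding pair, so $S \setminus (c \cup d) = A \sqcup B$; because $c$ and $d$ are distinct isotopy classes, neither component is an annulus, so $g_A, g_B \geq 1$ with $g_A + g_B = g - 1 \geq 2$.

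The central step is to produce a detour curve $e$ disjoint from $c \cup d$ with $\phi(e) = \pm 1$ and nonseparating in whichever of $A, B$ contains it. Indeed, if $e \subset A$ is nonseparating in $A$ then $A \setminus e$ is connected, so $S \setminus (c \cup e) = (A \setminus e) \cup_d B$ is connected, meaning $\{c,e\}$ is non-bounding; the same argument gives that $\{d,e\}$ is non-bounding, and $c - e - d$ is a length-two path in ${\cal C\cal G}_1^+$.

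Assume without loss of generality $g_A \geq g_B$. If $g_A \geq 2$, which is automatic when $g \geq 4$, I would embed two disjoint one-holed tori $T_1, T_2 \subset A$ lying in independent handles of $A$, apply Lemma \ref{torus} to each to obtain curves $\alpha_i \subset T_i$ with $\phi(\alpha_i) = 0$, connect $\alpha_1$ to $\alpha_2$ by an arc $\epsilon \subset A$, and set $e := \alpha_1 +_\epsilon \alpha_2$. By Lemma \ref{add} one has $\phi(e) = 1$, and the homology class $[e] = [\alpha_1] + [\alpha_2]$ is primitive in $H_1(A)$ (the $\alpha_i$ lie in independent handles), so $e$ is nonseparating in $A$, as required.

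The main obstacle is the residual case $g = 3$ with $g_A = g_B = 1$, where both components are two-holed tori and neither admits two disjoint one-holed tori. Here I would analyse a single one-holed torus $T \subset A$ with separating boundary $\ell$: Lemma \ref{add} applied to the pair of pants $A \setminus T$ gives $\phi(\ell) = \phi(c) + \phi(d) + 1$. If the Arf invariant of $\phi$ restricted to $T$ is trivial then $T$ itself contains a nonseparating curve with $\phi$-value $\pm 1$ which serves as $e$; otherwise all primitive curves of $T$ have $\phi = 0$, and I would either switch the analysis to $B$, or form an arc-sum of a curve in $T$ with a boundary-parallel copy of $c$ or $d$ in $A$ to produce a nonseparating curve in $A$ of spin value $\pm 1$. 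Since by Proposition \ref{genus3} connectedness of ${\cal C\cal G}_1$ for $g = 3$ is known only for $r \in \{2,4\}$, the arithmetic of $\phi(c) + \phi(d) + 1 \pmod r$ provides enough constraint---together with the option of swapping $A$ and $B$---to carry out a finite case check and always produce a valid $e$.
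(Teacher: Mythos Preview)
Your overall strategy---replace each edge of ${\cal C\cal G}_1$ that is not already in ${\cal C\cal G}_1^+$ by a path in ${\cal C\cal G}_1^+$---is exactly what the paper does, and your treatment of the case $g_A \geq 2$ (two disjoint one-holed tori in $A$, Lemma~\ref{torus} plus Lemma~\ref{add}) is correct and coincides with the paper's argument for the case where one side has genus at least~$2$.

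There are, however, two genuine gaps.

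First, the claim $g_A, g_B \geq 1$ fails when $S$ has nonempty boundary: one component of $S\setminus(c\cup d)$ can be a planar surface with three or more boundary circles (two coming from $c,d$ and at least one from $\partial S$), hence have genus~$0$ without being an annulus. The paper treats this explicitly: if $g_2=0$ then $g_1=g-1\geq 2$, and one runs the two-torus argument inside $S_1$.

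Second, and more seriously, your handling of the residual case $g=3$, $g_A=g_B=1$ does not go through. Neither of your proposed moves is guaranteed to produce a nonseparating curve contained in $A$ (or $B$) with $\phi=\pm 1$. For instance, take an even $\mathbb{Z}/2\mathbb{Z}$-spin structure with $\phi(\alpha_A)=\phi(\beta_A)=\phi(\alpha_B)=\phi(\beta_B)=0$ and $\phi(c)=1$; one checks (using Lemma~\ref{add} and twist-linearity) that \emph{every} nonseparating simple closed curve in the two-holed torus $A$ has $\phi=0$, and by symmetry the same holds in $B$, so switching sides does not help. Arc-summing a curve of $\phi$-value $0$ with a boundary-parallel copy of $c$ gives $\phi=0+1+1=0$, not $\pm 1$, so that move fails as well. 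The paper sidesteps this obstruction by \emph{not} insisting that the detour curve lie in a single component: it first produces a curve $e\in{\cal C\cal G}_1$ meeting both $c$ and $d$ once (obtained by Dehn-twisting a common transversal about $c$), and then connects $c$ to $e$ and $e$ to $d$ in ${\cal C\cal G}_1^+$ via curves of the form $a+_\epsilon b$ with $a\subset \Sigma_1$, $b\subset \Sigma_2$, where the arc $\epsilon$ crosses $d$ (respectively $c$) while avoiding $c\cup e$ (respectively $d\cup e$).
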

\begin{proof}
  Let $c,d\in {\cal C\cal G}_1$
  be two vertices which are connected by an edge in ${\cal C\cal G}_1$
  and which are not connected by an edge in ${\cal C\cal G}_1^+$.
  This means that $c,d$ are disjoint, and $S-(c\cup d)$ is disconnected.
  We have to show that $c,d$ can be connected in ${\cal C\cal G}_1^+$
  by an edge path.

  To this end recall that $c,d$ are nonseparating and therefore
 the disconnected surface 
  $S-(c\cup d)$ has two connected components $S_1,S_2$. The surface 
  $S_1$ has genus 
  $g_1\geq 1$ and at least two boundary components,
  and the surface $S_2$ has genus 
  $g_2=g-g_1-1\geq 0$ and at least two boundary components.

Choose a simple closed curve $d_i\subset S_i$ $(i=1,2)$ 
which bounds with $c\cup d$ a pair of pants $P_i$. 
Write $\Sigma_i=S_i-P_i$; the genus of $\Sigma_i$ equals $g_i$.
Glue $P_1$ to $P_2$ along $c\cup d$ so that the resulting surface
$\Sigma_0$ is a two-holed torus containing $c\cup d$ in its interior.
 Choose a 
 nonseparating simple closed
 curve $e\subset \Sigma_0$ which intersects both $c,d$ in a single point.
Since $\phi(c)=\pm 1$ we have  
 $\phi(T_ce)=\phi(e)\pm 1$ where $T_c$ is 
 the left Dehn twist about $c$. Thus  
 via replacing $e$ by $T_c^ke$ for a suitable choice of 
 $k\in \mathbb{Z}$ we may assume 
 that $\phi(e)=1$. In other words, we may assume that 
 $e$ is a vertex of ${\cal C\cal G}_1$. 
 
Assume for the moment that $g_2\geq 1$.  
  By Lemma \ref{torus}, there exist simple closed curves $a\subset \Sigma_1,
  b\subset \Sigma_2$ with $\phi(a)=\phi(b)=0$. Connect $a$ to $b$ by an
  embedded arc $\epsilon$ which is disjoint from $c\cup e$ (and crosses through 
  the curve $d$). The curve $a+_\epsilon b$ satisfies $\phi(a+_\epsilon b)=1$, 
and it  is disjoint from both $c$ and $e$. Moreover, 
 the surfaces $S-(c\cup a+_\epsilon b)$ and $S-(e\cup a+_\epsilon b)$ 
  are connected.
 As a consequence, 
 $c$ can be connected to $e$ by an edge path in ${\cal C\cal G}_1^+$
  of length two which passes through $a+_\epsilon b$. 
  
  By symmetry of this construction, $e$ can also be connected to $d$ by an edge
  path in ${\cal C\cal G}_1^+$ and hence $c$ can be connected to $d$ by such a path.
  This completes the proof in the case that the genus $g_2$ of $S_2$ is positive. 
  
  If the genus of $S_2$ vanishes then the genus of $S_1$ equals $g_1=g-1\geq 2$. 
  Any nonseparating  simple closed curve in $S_1$ forms with both $c,d$ a nonseparating pair. 
  To find such a curve $e$ with $\phi(e)=1$, note that 
  $S_1$ contains two disjoint one-holed tori $T_1,T_2$, and 
  by Lemma \ref{torus}, there are embedded
  simple closed curves $a_i\in T_i$ which satisfy $\phi(a_i)=0$. Then   
  for any arc $\epsilon$ in $S_1$ connecting $a_1$ to $a_2$,
  the curve $e=a_1 +_\epsilon a_2$ is nonseparating, and it is connected with both
  $c,d$ by an edge in ${\cal C\cal G}_1^+$.  
  This is what we wanted to show.
 \end{proof}  

Proposition \ref{genus3}, Proposition \ref{connected6} and Lemma \ref{next} together show

\begin{corollary}\label{connected}
Let $\phi$ be a $\mathbb{Z}/r\mathbb{Z}$-spin structure on a closed surface
$\Sigma$ of genus $g\geq 3$. Then the graph
${\cal C\cal G}_1^+$ is connected.
\end{corollary}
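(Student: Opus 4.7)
The plan is to deduce the corollary by a short case split on the genus $g$, invoking the three preceding results: Proposition \ref{genus3}, Proposition \ref{connected6}, and Lemma \ref{next}. The key observation to set up the case split is that a $\mathbb{Z}/r\mathbb{Z}$-spin structure on a closed surface $\Sigma_g$ can only exist when $r$ divides $2g-2$, as recalled in the introduction. This restriction on $r$ is exactly what makes the two different connectedness results (one for general $r$ but genus $g\geq 4$, and one for $r\in\{2,4\}$ valid already at genus $g\geq 3$) suffice together to cover every admissible pair $(g,r)$ with $g\geq 3$.

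First I would treat the case $g\geq 4$. Here there is no restriction needed on $r$: Proposition \ref{connected6} directly shows that the graph ${\cal C\cal G}_1$ associated with any $\mathbb{Z}/r\mathbb{Z}$-spin structure is connected. Next I would handle the remaining case $g=3$. Since $2g-2=4$, the only integers $r\geq 2$ that can occur as the order group of a spin structure on a closed surface of genus $3$ are $r=2$ and $r=4$, and for both of these values Proposition \ref{genus3} (which is designed precisely to exploit the additive formula for $\phi$ on curves of the form $c+_\epsilon d$ in the regimes where $r\in\{2,4\}$) guarantees the connectedness of ${\cal C\cal G}_1$.

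With connectedness of ${\cal C\cal G}_1$ in hand for every admissible $(g,r)$ with $g\geq 3$, the final step is to upgrade this to connectedness of the subgraph ${\cal C\cal G}_1^+$, which imposes the extra condition that the two curves at the endpoints of an edge together form a nonseparating pair. This is exactly the content of Lemma \ref{next}, whose hypothesis $g\geq 3$ is met in both cases. Applying that lemma concludes the argument.

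There is no genuine obstacle here; the corollary is essentially a bookkeeping statement combining the two connectedness propositions proved in the preceding subsections with the edge-refinement lemma. All the real work sits in those earlier results, whose proofs rely on the Putman trick applied to auxiliary graphs (nonseparating pairs of pants, disjoint separating pairs, nonseparating arcs) together with the basic arithmetic of $\phi$ under the $c+_\epsilon d$ construction from Lemma \ref{add}.
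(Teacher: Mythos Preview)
Your proof is correct and follows the same approach as the paper, which simply states that Proposition \ref{genus3}, Proposition \ref{connected6}, and Lemma \ref{next} together imply the corollary. You have made explicit the one point the paper leaves implicit: that in genus $3$ the divisibility condition $r\mid 2g-2$ forces $r\in\{2,4\}$, so Proposition \ref{genus3} covers exactly the cases not handled by Proposition \ref{connected6}.
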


  \section{The action of ${\rm Mod}(S)[\phi]$ on geometrically
    defined graphs}\label{theaction}

In this section we consider 
an arbitrary $\mathbb{Z}/r\mathbb{Z}$-spin structure $\phi$ on
a compact surface $S$ of genus $g\geq 3$, possibly with boundary,
 for some number $r\geq 2$.
Our goal is to gain some information on the 
stabilizer ${\rm Mod}(S)[\phi]$ of $\phi$ through its action
on the graph ${\cal C\cal G}_1^+$ introduced in Section \ref{graphsofcurves}.

We begin with 
some information on the stabilizer of a spin structure
$\phi$ on a compact surface $S$ with boundary. 
Fix a boundary component
$C$ of $S$.  Denote by $P_C{\rm Mod}(S)$ the subgroup of the 
mapping class group 
${\rm Mod}(S)$ of $S$ which fixes the boundary component $C$. 
Note that as we allow that a mapping class in $P_C{\rm Mod}(S)$ exchanges
boundary components of $S$ different from $C$, the group 
$P_C{\rm Mod}(S)$ coincides with the pure mapping 
class group of $S$ only if the boundary of
$S$ consists of one or two components.

Write $P_C{\rm Mod}(S)[\phi]$ to denote the stabilizer of $\phi$ in 
$P_C{\rm Mod}(S)$. This is a subgroup of $P_C{\rm Mod}(S)$ of finite index.
Let $\Sigma$ be the surface obtained from $S$ by
attaching a disk to $C$.
There is an embedding
$S\to \Sigma$ which induces a surjective homomorphism 
\[\Pi:P_C{\rm Mod}(S)\to {\rm Mod}(\Sigma).\] 

By a result of Johnson, extending earlier work of Birman
(see Section 4.2.5 of \cite{FM12}), there is an exact sequence
\begin{equation}\label{birman1}
1\to \mathbb{Z}\to {\rm ker}(\Pi)\xrightarrow{\Upsilon} \pi_1(\Sigma)\to 1\end{equation}
where $\mathbb{Z}$ is the infinite cyclic central subgroup of $P_C{\rm Mod}(S)$ 
generated by the Dehn twists about $C$ and where $\pi_1(\Sigma)$ is a
so-called point pushing group.  

For the formulation of the following lemma, recall that the integral
homology $H_1(\Sigma,\mathbb{Z})$ 
of a compact surface $\Sigma$ of genus $g\geq 2$,
possibly with boundary,
is a free abelian group
$\mathbb{Z}^{h}$ for some $h\geq 4$. In fact, $h=2g$ if the boundary of
$\Sigma$ is empty or connected, and in this case
this group is generated by the homology classes of 
nonseparating simple closed curves on $\Sigma$. If the boundary of
$\Sigma$ is disconnected, then it is still true that 
$H_1(\Sigma,\mathbb{Z})$ is generated by simple closed possibly
peripheral curves.

For $m\geq 1$ let $\Lambda_m\subset \pi_1(S)$ be the subgroup defined by the 
exact sequence
\[0\to \Lambda_m\to \pi_1(S)\to H_1(S,\mathbb{Z}/m\mathbb{Z})\to 0.\]
If  $\zeta:\pi_1(S)\to H_1(S,\mathbb{Z})$ denotes 
the natural surjective projection, then $\Lambda_m$ is 
the preimage under $\zeta$ of the lattice in 
$H_1(S,\mathbb{Z})$
generated by $m$ times the simple 
loop generators, and it is a subgroup
of $\pi_1(S)$ of finite index. 
Using the notations from the previous paragraph we have

\begin{lemma}\label{pointpush1}
Assume that the boundary circle $C$ is equipped with the orientation
induced from the orientation of $S$. 
\begin{enumerate}
\item If $\phi(C)=-1$ then 
$\Upsilon({\rm ker}\,\Pi\cap P_C{\rm Mod}(S)[\phi])= 
\pi_1(\Sigma)$.
\item 
If $\phi(C)=1$,  
then $\Upsilon({\rm ker}\, \Pi\cap P_C{\rm Mod}(S)[\phi])=\Lambda_{m}$
where $m=r/2$ if $r$ is even, and $m=r$ otherwise.
\end{enumerate}
\end{lemma}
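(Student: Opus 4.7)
The plan is to realize every point-pushing element in $\ker\Pi$ explicitly as a product of Dehn twists, use twist linearity (Definition \ref{spin}) to compute its action on $\phi$, and reduce the preservation condition to a linear equation on $[\gamma]\in H_1(\Sigma,\mathbb{Z})$.

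First, for a simple loop $\gamma\in\pi_1(\Sigma,p)$ I represent $\gamma$ in $S$ by a properly embedded arc $\tilde\gamma$ with both endpoints on $C$. A regular neighborhood of $C\cup\tilde\gamma$ is a pair of pants $N$ whose boundary consists of $C$ together with two nonseparating simple closed curves $\gamma_L,\gamma_R$. The standard point-pushing formula (\S4.2 of \cite{FM12}) gives a representative $\psi_\gamma=T_{\gamma_L}T_{\gamma_R}^{-1}$ of $\Upsilon^{-1}(\gamma)$ modulo the central factor $T_C$; since $\iota(d,C)=0$ for any interior $d$, the Dehn twist $T_C$ preserves $\phi$, so it suffices to test $\psi_\gamma$. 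Orient $\gamma_L,\gamma_R$ as parallels of $\gamma$. Then $\iota(\gamma_L,\gamma_R)=0$ and $\iota(d,\gamma_L)=\iota(d,\gamma_R)=\iota(d,\gamma)$ for every oriented simple closed curve $d$, and two applications of twist linearity yield
\[
\phi(\psi_\gamma(d))-\phi(d)=\iota(d,\gamma)\bigl(\phi(\gamma_L)-\phi(\gamma_R)\bigr).
\]

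The next step is to evaluate $\phi(\gamma_L)-\phi(\gamma_R)$ inside $N$ using Salter's arc addition formula (Lemma \ref{add}). Of the two orientations of $\gamma_L,\gamma_R$ inherited from $\gamma$, exactly one coincides with the $N$-boundary orientation; applying Lemma \ref{add} to $\gamma_L$ and $\gamma_R^{-1}$ with an arc $\epsilon\subset N$ (so that both have $N$ on their left) produces, as the third boundary of the auxiliary pair of pants, the circle $C$ with the reverse of its $S$-boundary orientation. This gives
\[
-\phi(C)=\phi(\gamma_L)+\phi(\gamma_R^{-1})+1=\phi(\gamma_L)-\phi(\gamma_R)+1,
\]
so $\phi(\gamma_L)-\phi(\gamma_R)\equiv -\phi(C)-1\pmod r$ and hence $\phi(\psi_\gamma(d))-\phi(d)=-\iota(d,\gamma)\bigl(\phi(C)+1\bigr)$.

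To promote this from simple $\gamma$ to all of $\pi_1(\Sigma)$, I set $\delta(\gamma)=\psi_\gamma^*\phi-\phi$, which a priori is a crossed homomorphism $\pi_1(\Sigma)\to H^1(S,\mathbb{Z}/r\mathbb{Z})$ for the natural action. The action of $\psi_\gamma$ on $H_1(S,\mathbb{Z})$ only translates by multiples of $[C]$, and therefore shifts any $\lambda\in H^1(S,\mathbb{Z}/r\mathbb{Z})$ by $\lambda([C])\cdot\iota(\cdot,\gamma)$. Since $\delta(\gamma)([C])=-\iota([C],\gamma)\bigl(\phi(C)+1\bigr)=0$ (the boundary class $[C]$ pairs trivially with any interior curve), the action of $\pi_1(\Sigma)$ fixes the class $\delta(\gamma)$, so $\delta$ is an honest homomorphism and factors through $H_1(\Sigma,\mathbb{Z})$. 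As $H_1(\Sigma,\mathbb{Z})$ is generated by homology classes of simple loops, the formula from the previous paragraph extends to
\[
\delta(\gamma)(d)=-\iota(d,\gamma)\bigl(\phi(C)+1\bigr)\qquad\text{for every }\gamma\in\pi_1(\Sigma).
\]

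Thus $\Upsilon(\ker\Pi\cap P_C\mathrm{Mod}(S)[\phi])$ is the subgroup of those $\gamma$ with $\bigl(\phi(C)+1\bigr)[\gamma]\equiv 0$ in $H_1(\Sigma,\mathbb{Z}/r\mathbb{Z})$. When $\phi(C)=-1$ the coefficient vanishes and the condition is vacuous, giving all of $\pi_1(\Sigma)$ and proving (1). When $\phi(C)=1$ the condition becomes $2[\gamma]\equiv 0$ in $H_1(\Sigma,\mathbb{Z}/r\mathbb{Z})$, equivalently $[\gamma]\in\bigl(r/\gcd(r,2)\bigr)H_1(\Sigma,\mathbb{Z})$, which is exactly $\Lambda_m$ with $m=r/2$ for $r$ even and $m=r$ for $r$ odd, proving (2). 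The main obstacle I anticipate is the orientation bookkeeping in the second paragraph: one must verify carefully that when $\gamma_L,\gamma_R$ carry the parallel-to-$\gamma$ orientations, exactly one of them coincides with its $N$-boundary orientation, so that Lemma \ref{add} really produces $C^{-1}$ (rather than some other signed combination) as the third boundary of the auxiliary pair of pants. The secondary subtlety—that the crossed homomorphism $\delta$ is actually a homomorphism—reduces to the essentially tautological identity $\iota([C],\gamma)=0$.
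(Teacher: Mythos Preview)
Your proof is correct. The computational core---realize point-pushing along a simple loop as $T_{\gamma_L}T_{\gamma_R}^{-1}$ and compute its effect on $\phi$ via twist linearity and the pair-of-pants relation---is exactly the paper's approach (the paper uses the homological-coherence identity $\phi(C)+\phi(c_1)+\phi(c_2)=-1$ from \cite{Sa19} rather than Lemma~\ref{add}, but these carry the same information here). Where you differ is in the passage from simple loops to all of $\pi_1(\Sigma)$: the paper handles the commutator subgroup by a separate argument, noting that it is generated by simple \emph{separating} loops and that for such a loop both twists $T_{c_1},T_{c_2}$ about the boundary of its annular neighborhood individually preserve $\phi$ (Lemma~3.15 of \cite{Sa19}). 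Your crossed-homomorphism argument is cleaner and avoids this case split: since $\delta(\gamma)$ always kills $[C]$ (tautologically, as every $\psi_\gamma$ fixes $C$), the image of $\delta$ sits in a trivial $\pi_1(\Sigma)$-module, so $\delta$ is an honest homomorphism factoring through $H_1(\Sigma)$, and the simple-loop formula extends linearly. The paper's approach has the compensating virtue of exhibiting explicit elements of the stabilizer (the separating twists) that are used elsewhere. One remark on the orientation issue you flag: with the usual convention it is $\gamma_L^{-1}$ and $\gamma_R$ (not $\gamma_L$ and $\gamma_R^{-1}$) that have $N$ on their left, which flips the sign to $\phi(\gamma_L)-\phi(\gamma_R)=\phi(C)+1$; this is harmless, since the conclusion depends only on whether $(\phi(C)+1)[\gamma]\equiv 0\bmod r$.
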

\begin{proof}
Choose a basepoint $p$ for $\pi_1(\Sigma)$ in the interior of the
attached disk. 
Let $\alpha\subset \Sigma$ be a simple nonseparating loop through the basepoint $p$. 
Up to homotopy,  the oriented boundary
of a tubular neighborhood of $\alpha$ consists of two simple closed curves
$c_1,c_2$ which enclose the circle $C$. In other words, together with $C$ the
curves $c_1,c_2$ 
bound a pair of pants $P$ in $S$. We equip the curves $c_i$ with
the orientation as boundary curves of $P$. 

By Proposition 3.8 of \cite{Sa19}, we have
\begin{equation}\label{pairofpants}
\phi(C)+\phi(c_1)+\phi(c_2)=-1\end{equation}
and hence if $\phi(C)=-1$ then $\phi(c_1)+\phi(c_2)=0$.

Let as before $T_d$ be the left Dehn twist about a
simple closed curve $d$.
Let $\beta\subset S$ be an oriented simple closed curve which crosses through 
the pair of pants $P$.  As $c_1,c_2$ are disjoint, we have
$\iota(T_{c_2}^{-1}(\beta),c_1)=\iota(\beta,c_1)$ and therefore 
Definition \ref{spin} shows that
\begin{align}\label{intersectcom}
\phi(T_{c_1}T_{c_2}^{-1}(\beta)) &=\phi(T_{c_2}^{-1}(\beta))+
\iota(\beta,c_1)\phi(c_1)\\ &=
\phi(\beta)+\iota(\beta,c_1)\phi(c_1)-
\iota(\beta,c_2)\phi(c_2).\notag
\end{align}
On the other hand, as $c_1+c_2$ is homologous to the boundary curve $C$,
the homological intersection number fulfills 
$\iota(\beta,c_1+c_2)=0$. Hence from (\ref{pairofpants}) 
we conclude that if $\phi(C)=-1$ then 
$\phi(T_{c_1}T_{c_2}^{-1}(\beta))=
\phi(\beta)$. Since $\beta$ was an arbitrary simple closed curve, 
this shows that $T_{c_1}T_{c_2}^{-1}\in P_C{\rm Mod}(S)[\phi]$. 
But
$T_{c_1}T_{c_2}^{-1}\in P_C{\rm Mod}(S)$ is just the point-pushing map 
about $\alpha$ and therefore 
$\alpha$ is contained in 
$\Upsilon(P_C{\rm Mod}(S)[\phi])$. We refer to \cite{FM12} for a comprehensive
discussion of the various versions of the Birman exact sequence. 

As the point pushing group $\pi_1(\Sigma)$ is generated by point pushing maps
along simple loops, this shows the first part of the lemma.

To show the second part of the lemma, assume now that 
$\phi(C)=1$.  Equation (\ref{pairofpants}) shows that 
$\phi(c_1)+\phi(c_2)=-2$ and hence by Formula (\ref{intersectcom}) we have
\[\phi(T_{c_1}T_{c_2}^{-1}(\beta))=\phi(\beta) +\iota(\beta,c_1)\phi(c_1)+\iota(\beta,c_2)(\phi(c_1)+2).\]

Now let us assume that the oriented simple closed curve 
$\beta$ crosses a single time through $c_1$, say when 
it enters $P$. Then $\iota(\beta,c_1)=-1,\iota(\beta,c_2)=1$ and hence
\begin{equation}\label{add5}
\phi(T_{c_1}T_{c_2}^{-1}(\beta))=\phi(\beta)-\phi(c_1)+\phi(c_1)+2=
\phi(\beta)+2.\end{equation}  
Using this formula $r/2$ times if $r$ is even, and $r$ times if $r$ is odd, 
we conclude that
the point pushing map about $\alpha$ 
is not contained in ${\rm Mod}(S)[\phi]$, but it is the case for 
its $r/2$-th power or $r$-th power, respectively. 
Namely, putting $m=r/2$ if $r$ is even and $m=r$ otherwise, 
it follows from the above discussion that we have
$\phi((T_{c_1}T_{c_2}^{-1})^{m}(\beta))=\phi(\beta)$
for every simple
closed curve $\beta$ which either is disjoint from $P$ or which crosses
through $P$ precisely once. As such curves span the first homology of 
$S$, we conclude that the pull-back of $\phi$ under 
$(T_{c_1}T_{c_2}^{-1})^{m}$ coincides with $\phi$ on a
collection of simple closed
curves which span $H_1(S,\mathbb{Z})$. 
Corollary 2.6 of \cite{HJ89}  then shows that indeed, 
$(T_{c_1}T_{c_2}^{-1})^{m}\in P_C{\rm Mod}(S)[\phi]$. 
Moreover, by equation (\ref{add5}), we know that 
$(T_{c_1}T_{c_2}^{-1})^k\not\in P_C{\rm Mod}(S)[\phi]$ if 
$k$ is not a multiple of $m$. 

On the other hand, by Lemma 3.15 of \cite{Sa19}, 
Dehn twists about separating simple closed curves in 
$S$ are contained in ${\rm Mod}(S)[\phi]$. 
As the commutator subgroup of 
$\pi_1(\Sigma)$ is generated by simple closed separating curves, 
and for each such curve $\alpha$ both Dehn twists 
$T_{c_1},T_{c_2}$ about the boundary curves of a tubular neighborhood
of $\alpha$ as above are contained in $P_C{\rm Mod}(S)[\phi]$, 
this yields the
second part of the lemma. 
\end{proof}

Consider again an arbitrary compact surface $S$ 
of genus $g\geq 2$, equipped with a
$\mathbb{Z}/r\mathbb{Z}$-spin structure $\phi$ for some $r\geq 2$.
We use Lemma \ref{pointpush1} to 
analyze the action of ${\rm Mod}(S)[\phi]$ on the graph
${\cal C\cal G}_1^+$. We begin with the investigation of 
the stabilizer of a vertex $c$ of ${\cal C\cal G}_1^+$ in 
${\rm Mod}(S)[\phi]$. As ${\rm Mod}(S)[\phi]$ is a subgroup of 
${\rm Mod}(S)$ of finite index, the stabilizer 
${\rm Stab}(c)[\phi]$ of $c$ in ${\rm Mod}(S)[\phi]$ is a subgroup of 
finite index of the stabilizer ${\rm Stab}(c)$ of $c$ in ${\rm Mod}(S)$. 

The group ${\rm Stab}(c)$ can be described as follows. 
Cut $S$ open 
along $c$. The result is a surface $\Sigma^2$ of 
genus $g-1$ with two distinguished boundary components $C_1,C_2$.
These components are equipped with an orientation as subsets of the
oriented boundary of $\Sigma^2$. To simplify notations, let 
${\rm Mod}(\Sigma^2)$ be the subgroup of the mapping class group
of $\Sigma^2$ which preserves the subset $C_1\cup C_2$ of the boundary.
We allow that an element of ${\rm Mod}(\Sigma^2)$ 
exchanges $C_1$ and $C_2$. 
The stabilizer ${\rm Stab}(c)$ of $c$ in the mapping class group ${\rm Mod}(S)$ 
of $S$ can be identified with the quotient of the group ${\rm Mod}(\Sigma^2)$
by the relation $T_{C_1}T_{C_2}^{-1}=1$ where $T_{C_i}$ denotes
the left Dehn twist about the boundary circle $C_i$ (Theorem 3.18 of
\cite{FM12}). In short, we have 
\[{\rm Stab}(c)={\rm Mod}(\Sigma^2)/\mathbb{Z}.\] 

The infinite cyclic subgroup of ${\rm Stab}(c)$ generated by
the Dehn twist about $c$ is central. 
The quotient group ${\rm Stab}(c)/\mathbb{Z}$ can naturally be identified  
with the mapping class group ${\rm Mod}(\Sigma_2)$ of a surface
of genus $g-1$ with two punctures
and perhaps with boundary if the boundary of $S$ is non-trivial. 
We refer to \cite{FM12} for a comprehensive discussion of these facts. 

Let $\Sigma$ be the surface obtained from $\Sigma_2$ by forgetting the
punctures. Alternatively, $\Sigma$ is obtained from $\Sigma^2$ by 
attaching a disk to each boundary component. The group 
${\rm Mod}(\Sigma_2)={\rm Stab}(c)/\mathbb{Z}$ 
fits into the \emph{Birman exact sequence}
\begin{equation}\label{birman3}
1\to \pi_1(C(\Sigma,2))\xrightarrow{\rho} {\rm Stab}(c)/\mathbb{Z}\to
  {\rm Mod}(\Sigma)\to 1\end{equation}
  where $\pi_1(C(\Sigma,2))$ is the \emph{surface braid group}, that is, the fundamental
  group of the configuration space of two unordered distinct points in $\Sigma$. 
In particular, $\pi_1(C(\Sigma,2))$ is a normal subgroup of
${\rm Stab}(c)/\mathbb{Z}={\rm Mod}(\Sigma_2)$.

The surjective homomorphism
\[\theta:{\rm Stab}(c)\to {\rm Stab}(c)/\mathbb{Z}={\rm Mod}(\Sigma_2)\] restricts to 
a homomorphism ${\rm Stab}(c)[\phi]\to {\rm Mod}(\Sigma_2)$. The next proposition
gives some first information on its image under the assumption that 
$\phi$ is a $\mathbb{Z}/2\mathbb{Z}$-spin structure and 
$\phi(c)=1$.

\begin{proposition}\label{surject}
Let $\phi$ be a $\mathbb{Z}/2\mathbb{Z}$-spin structure on $S$ and let
$c$ be a simple closed curve with $\phi(c)=1$. Then 
  $\rho(\pi_1(C(\Sigma,2)))\subset \theta({\rm Stab}(c)[\phi])$.  
\end{proposition}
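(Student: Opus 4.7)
The plan is to use the decomposition of $\rho(\pi_1(C(\Sigma,2)))$, viewed as the kernel of the Birman sequence (\ref{birman3}), into its pure braid subgroup $\rho(\pi_1(PC(\Sigma,2)))$ of index two and one half-twist coset. So I need to show: (i) every disk-pushing of either marked point lies in $\theta({\rm Stab}(c)[\phi])$, which by the Fadell--Neuwirth fibration amounts to producing the two subgroups $\rho(\pi_1(\Sigma\setminus p_j))$; and (ii) one specific half-twist exchanging $p_1,p_2$ lies there too.

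For (i), cut $S$ along $c$ to obtain $\Sigma^2$ with boundary components $C_1,C_2$, each identified with $c$. Since $r=2$ we have $\phi(C_i)=1$ in $\mathbb{Z}/2\mathbb{Z}$, so Lemma \ref{pointpush1}(2) applies to each $C_i$ with $m=r/2=1$ and shows that $\Upsilon({\rm ker}\,\Pi\cap P_{C_i}{\rm Mod}(\Sigma^2)[\phi])$ is the full fundamental group of the surface obtained from $\Sigma^2$ by capping $C_i$. Passing through ${\rm Stab}(c)$ and applying $\theta$ (which further caps off the remaining boundary) turns these into the disk-pushing subgroups for $p_1$ and $p_2$ in ${\rm Mod}(\Sigma_2)$, yielding (i).

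For (ii), I construct a half-twist explicitly. Pick a simple closed curve $e\subset S$ meeting $c$ transversally in one point; if $\phi(e)=1$, replace $e$ by $T_c e$, which still meets $c$ once and, by twist-linearity, now has $\phi(e)=0$. Let $T$ be a regular neighborhood of $c\cup e$, a one-holed torus, and let $h=(T_cT_e)^3$, the generator of the center of ${\rm Mod}(T,\partial T)\cong B_3$, extended by the identity outside $T$. Its image in ${\rm Mod}(T,\partial T)/\langle T_{\partial T}\rangle={\rm SL}_2(\mathbb{Z})$ is $-I$, so $h$ preserves the isotopy class of every simple closed curve in $T$ while reversing orientation; in particular $h\in {\rm Stab}(c)$, and being orientation preserving, $h$ must swap the two local sides of $c$. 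Consequently $\theta(h)$ is supported on the disk with two marks into which $T$ collapses upon capping $c$, acts there as a half-twist, and so lies in $\rho(\pi_1(C(\Sigma,2)))\setminus \rho(\pi_1(PC(\Sigma,2)))$.

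The main obstacle is verifying $h\in {\rm Stab}(\phi)$. Extend $c,e$ to a geometric symplectic basis $(c,e,a_2,b_2,\dots,a_g,b_g)$ of $H_1(S,\mathbb{Z})$ with each $a_i,b_i$ disjoint from $T$. On the $a_i,b_i$ we have $h^*\phi=\phi$ trivially since $h$ is the identity off $T$; on $c$ and $e$ we compute $\phi(h(c))=\phi(c^{-1})=-\phi(c)$ and $\phi(h(e))=\phi(e^{-1})=-\phi(e)$, which equal $\phi(c)$ and $\phi(e)$ respectively precisely because we work in $\mathbb{Z}/2\mathbb{Z}$ and because we arranged $\phi(e)=0$. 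Corollary 2.6 of \cite{HJ89} then gives $h^*\phi=\phi$, so $h\in {\rm Stab}(c)[\phi]$; combining $\theta(h)$ with the subgroups from (i) generates all of $\rho(\pi_1(C(\Sigma,2)))$.
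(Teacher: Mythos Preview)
Your proof is correct and follows essentially the same strategy as the paper: split $\pi_1(C(\Sigma,2))$ into its index-two pure subgroup plus a single swapping element, handle the pure part by iterating Lemma~\ref{pointpush1} (the paper does this as two successive cappings $\Sigma^2\to\Sigma^1\to\Sigma$, you phrase it via Fadell--Neuwirth), and then produce one element exchanging the two punctures. The only real difference is in this last step: the paper disposes of it in one sentence (``exchanging $C_1$ and $C_2$ also preserves $\hat\phi$''), whereas you construct the element explicitly as $h=(T_cT_e)^3$ and verify $h^*\phi=\phi$ on a symplectic basis. Your construction is a genuine addition of detail. One small remark: the adjustment of $e$ to have $\phi(e)=0$ is unnecessary here, since in $\mathbb{Z}/2\mathbb{Z}$ one has $-\phi(e)=\phi(e)$ regardless; but it does no harm.
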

\begin{proof} Let $\pi_1(PC(\Sigma,2))$ be the intersection of the 
fibre of the Birman exact sequence (\ref{birman3})
with the subgroup of ${\rm Mod}(\Sigma_2)$
which fixes each of the two distinguished punctures. 
Following Section 4.2.5 of \cite{FM12},
the group $\pi_1(PC(\Sigma,2))$ can be described as follows. 

Let $C_1,C_2$ be the distinguished boundary components of the surface 
$\Sigma^2=S-c$. Let $\Sigma^1$ be the surface 
obtained from $\Sigma^2$ by attaching a disk to the boundary circle 
$C_1$. 
Let $P{\rm Stab}(c)$ and $P{\rm Mod}(\Sigma^2)$
be the index two subgroup of ${\rm Stab}(c)$ and ${\rm Mod}(\Sigma^2)$ 
which preserves each of the
two boundary components $C_1,C_2$ 
of $S-c$.
The inclusion $\Sigma^2\to \Sigma^1$ induces a surjective homomorphism
\[\Xi:P{\rm Stab}(c)/\mathbb{Z}\to {\rm Mod}(\Sigma^1)/\mathbb{Z}\] where as before
${\rm Mod}(\Sigma^1)$ is required to fix 
the boundary component $C_2$ of $\Sigma^1$
and where the group $\mathbb{Z}$ acts as the
group of Dehn twists
about $c$ and about $C_2$. 
The kernel ${\rm ker}(\Xi)$ 
of this homomorphism is isomorphic to $\pi_1(\Sigma^1)$ (see \cite{FM12} for more
information on this version of the Birman exact sequence).

The spin structure $\phi$ pulls back to a spin structure
$\hat \phi$ on $\Sigma^2$. 
Since $\phi$ is a $\mathbb{Z}/2\mathbb{Z}$-spin structure on 
$S$ and $\phi(c)=1$, the value of 
$\hat \phi$ 
on each of the two boundary circles $C_1,C_2$
coincides with the value of 
a spin structure on the boundary of an embedded disk. This implies that 
$\hat \phi$ induces a spin structure $\phi^\prime$ on $\Sigma^1$.
Or, equivalently, $\hat \phi$ is the pull-back of a spin structure 
$\phi^\prime$ on $\Sigma^1$ via the inclusion $\Sigma^2\to \Sigma^1$. 
 By Lemma \ref{pointpush1}, the group ${\rm ker}(\Xi)=
\pi_1(\Sigma^1)$ stabilizes $\hat \phi$, that is, we have 
${\rm ker}(\Xi)\subset {\rm Mod}(\Sigma^2)[\hat \phi]$. 

Apply Lemma \ref{pointpush1} a second time 
to the homomorphism
${\rm Mod}(\Sigma^1)/\mathbb{Z}\to {\rm Mod}(\Sigma)$ where $\Sigma$ is obtained from 
$\Sigma^1$ by attaching a disk to $C_2$. 
As the group $\pi_1(PC(\Sigma,2))$ can be 
described as the quotient by its center $\mathbb{Z}^2$ 
of the kernel of the homomorphism $P{\rm Mod}(\Sigma^2)\to {\rm Mod}(\Sigma)$ 
which is obtained 
by applying the Birman
exact sequence twice, first to a map which caps off the boundary component
$C_1$, followed
by the map which caps off $C_2$, this shows that $\pi_1(PC(\Sigma,2))\subset
\theta ({\rm Stab}(c)[\phi]$. As exchanging $C_1$ and $C_2$ also preserves
$\hat \phi$ the proposition follows. 
\end{proof}

We are now ready to give a complete description of the stabilizer 
in ${\rm Mod}(S)[\phi]$ of 
a nonseparating simple closed curve $c$ on $S$ with $\phi(c)=1$
where as before, $\phi$ is 
a $\mathbb{Z}/2\mathbb{Z}$-spin
structure on a compact surface $S$ of genus $g\geq 3$.

Cut $S$ open along $c$ and write
$\Sigma^2=S-c$. The spin structure $\phi$ of $S$ pulls back to a 
$\mathbb{Z}/2\mathbb{Z}$-spin structure 
$\hat \phi$ on $\Sigma^2$. 
Denote as before by $\Sigma$ the surface of genus $g-1$ with empty or connected
boundary obtained from
$\Sigma^2$ by capping off the two distinguished boundary
components. We have

\begin{proposition}\label{parity}
  The $\mathbb{Z}/2\mathbb{Z}$-spin structure $\phi$ on $S$ induces
  a $\mathbb{Z}/2\mathbb{Z}$-spin structure $\phi_c$ on
  $\Sigma$ whose parity 
   coincides with the parity of $\phi$. If $\Pi:{\rm Stab}(c)/\mathbb{Z}\to 
   {\rm Mod}(\Sigma)$ denotes the surjective homomorphism induced by the 
  inclusion $S-c\to \Sigma$ then 
  \[\Pi^{-1}{\rm Mod}(\Sigma)[\phi_c]={\rm Stab}(c)[\phi]/\mathbb{Z}.\] 
\end{proposition}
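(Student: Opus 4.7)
The plan is to handle three steps in order: extend the pulled-back spin structure $\hat\phi$ on $\Sigma^2 = S - c$ across the capping disks to produce $\phi_c$; match the parity of $\phi_c$ with that of $\phi$; and then prove the two inclusions of the subgroup equality.

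For the first step, the two boundary circles of $\Sigma^2$ are each isotopic in $S$ to $c$, so they have $\hat\phi$-value $\phi(c) = 1$, which in $\mathbb{Z}/2\mathbb{Z}$ coincides with the value a spin structure assigns to the boundary of an embedded disk (orientation reversal being immaterial modulo $2$). Applying the argument from the proof of Proposition \ref{surject} twice, once for each capping disk, produces the spin structure $\phi_c$ on $\Sigma$. For the parity, choose a geometric symplectic basis $a_1, b_1, \ldots, a_g, b_g$ of $H_1(S, \mathbb{Z})$ with $a_g$ isotopic to $c$. Then $a_1, b_1, \ldots, a_{g-1}, b_{g-1}$ can be realized inside $\Sigma^2 \subset \Sigma$ and form a geometric symplectic basis for $H_1(\Sigma, \mathbb{Z})$ on which $\phi_c$ agrees with $\phi$ by construction. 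Since $\phi(c) + 1 \equiv 0 \pmod 2$, the $i = g$ summand of the Arf formula \eqref{arf} vanishes, and
\[{\rm Arf}(\phi) = \sum_{i=1}^{g-1}(\phi_c(a_i)+1)(\phi_c(b_i)+1) = {\rm Arf}(\phi_c).\]

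The inclusion ${\rm Stab}(c)[\phi]/\mathbb{Z} \subset \Pi^{-1}({\rm Mod}(\Sigma)[\phi_c])$ is immediate: for $f \in {\rm Stab}(c)[\phi]$ and $\bar f = \Pi(\theta(f)) \in {\rm Mod}(\Sigma)$, every simple closed curve $\alpha$ on $\Sigma$ can be isotoped into $\Sigma^2$, so $\phi_c(\bar f_* \alpha) = \phi(f_* \alpha) = \phi(\alpha) = \phi_c(\alpha)$. For the reverse inclusion, take $\bar f \in \Pi^{-1}({\rm Mod}(\Sigma)[\phi_c])$ and any lift $f \in {\rm Stab}(c)$; I will find $k \in \{0, 1\}$ with $T_c^k f \in {\rm Stab}(c)[\phi]$. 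By Corollary 2.6 of \cite{HJ89}, it suffices to verify $\phi((T_c^k f)_* \gamma) = \phi(\gamma)$ on the spanning set $\{a_1, b_1, \ldots, a_{g-1}, b_{g-1}, c, b_g\}$ of $H_1(S, \mathbb{Z})$. For $\gamma \in \{a_i, b_i : i < g\}$, $\bar f$ preserves $\phi_c$ and $\gamma$ lies in $\Sigma^2$, so $\phi(f_* \gamma) = \phi(\gamma)$; moreover $\iota(\gamma, c) = 0$ makes the power of $T_c$ irrelevant. For $\gamma = c$, the equality $f_* c = \pm c$ gives $\phi(f_* c) = \phi(c) \pmod 2$, and again $\iota(c, c) = 0$. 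Finally $\iota(b_g, c) = \pm 1$, so twist linearity yields
\[\phi((T_c^k f)_* b_g) = \phi(f_* b_g) + k \pmod 2,\]
and exactly one of $k = 0, 1$ matches $\phi(b_g)$.

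The main obstacle is precisely the reverse inclusion: because $\phi(c) = 1$ one has $T_c \notin {\rm Stab}(c)[\phi]$, so an arbitrary lift of $\bar f$ need not preserve $\phi$. The key observation making the argument work is that the only direction in which the discrepancy can arise is along the dual curve $b_g$, and a single application of $T_c$ toggles the spin value on $b_g$ by exactly $1 \pmod 2$, which suffices to correct any discrepancy.
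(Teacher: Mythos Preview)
Your argument is correct. The construction of $\phi_c$ and the parity computation match the paper's proof essentially verbatim, and the easy inclusion is handled the same way.

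For the reverse inclusion your route differs from the paper's. The paper proceeds in two steps: first it lifts an element $\theta\in {\rm Mod}(\Sigma)[\phi_c]$ to a diffeomorphism of $\Sigma$ that is the identity on a disk containing both marked points, producing a diffeomorphism $\Theta$ of $S$ that is the identity on an entire one-holed torus containing $c$; this forces $\Theta^*\phi=\phi$ on all of $H_1(S,\mathbb{Z})$ at once and establishes surjectivity of $\Pi$ restricted to ${\rm Stab}(c)[\phi]/\mathbb{Z}$. Then it invokes the separately proved Proposition~\ref{surject} (which in turn rests on Lemma~\ref{pointpush1}) to show that the kernel $\pi_1(C(\Sigma,2))$ of $\Pi$ lies in ${\rm Stab}(c)[\phi]/\mathbb{Z}$. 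Your argument instead takes an arbitrary lift $f\in{\rm Stab}(c)$, checks agreement of $(T_c^k f)^*\phi$ with $\phi$ on a geometric symplectic basis, and observes that the only possible failure occurs on the single curve $b_g$ dual to $c$, where it can always be repaired by a suitable $k\in\{0,1\}$. This is more elementary and entirely self-contained: it bypasses Proposition~\ref{surject} and the disk-pushing analysis behind it. The paper's geometric lifting argument, on the other hand, makes the role of the point-pushing subgroup explicit and integrates naturally with the inductive structure used later in Sections~\ref{structureof} and~\ref{structureeven}. One small remark: where you write ``$\bar f$ preserves $\phi_c$'' you mean $\Pi(\bar f)$; and the appeal to Corollary~2.6 of \cite{HJ89} needs the curves $a_1,b_1,\dots,a_{g-1},b_{g-1},c,b_g$ to span $H_1(S,\mathbb{Z})$, which holds since $S$ has empty or connected boundary in this setting.
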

\begin{proof} As $\phi$ is a $\mathbb{Z}/2\mathbb{Z}$-spin structure,
the value of $\phi$ on a boundary circle of $S-c$ corresponding to a copy of $c$
coincides with the value of a $\mathbb{Z}/2\mathbb{Z}$-spin 
structure on the boundary of a disk. Thus 
$\phi$ induces a spin structure $\phi_c$ on $\Sigma$.

To compare the parities of 
the spin structures $\phi$ and $\phi_c$,
  assume that
  $\Sigma$ is obtained from $S-c$ by attaching disks $D_1,D_2$
  to the two boundary components of $S$ which correspond to the
  two copies of $c$. 
  Choose a geometric symplectic basis $a_1,b_1,\dots,a_{g-1},b_{g-1}$ 
  for $\Sigma$, consisting of simple closed
  oriented curves which do not intersect the disks $D_1,D_2$.
  Then
  $a_1,b_1,\dots,a_{g-1},b_{g-1}$ can be viewed as a system of curves
  in $\Sigma^2=\Sigma-(D_1\cup D_2)$ which maps to a curve system with the same
  properties in $S$ by the map $\Sigma^2\to S$. This curve system  
  can be extended to a geometric symplectic
  basis for $S$
  containing the 
  curve $c$, equipped with any orientation. As $\phi(c)=1$
  we have
  $\phi(c)+1=0$. The claim now follows from
  the fact that $\phi_c(u)=\phi(\hat u)$ for
  $u\in \{a_1,b_1,\dots,a_{g-1},b_{g-1}\}$ where $\hat u$ is the image of $u$
  under the inclusion $\Sigma^2\to S$, together with 
  the formula (\ref{arf}) for the
  Arf invariant. 

We are left with showing that ${\rm Stab}(c)[\phi]/\mathbb{Z}=
\Pi^{-1}{\rm Mod}(\Sigma)[\phi_c]$. Observe first that as $\phi_c$ is induced
from $\phi$, we have $\Pi{\rm Stab}(c)[\phi]/\mathbb{Z}\subset {\rm Mod}(\Sigma)[\phi_c]$. 

To show that in fact equality holds let $\Sigma_2$ be the 
surface obtained from $S-c$ by replacing the boundary components by punctures. 
The group ${\rm Stab}(c)[\phi]/\mathbb{Z}$ can be identified with 
a subgroup $\Gamma_c$ of ${\rm Mod}(\Sigma_2)$.
We view the punctures of $\Sigma_2$ as marked points $p_1,p_2$ in $\Sigma$.

Let $\theta$ be any diffeomorphism of $\Sigma$ which preserves $\phi_c$. 
Then $\theta$ is isotopic to a diffeomorphism of $\Sigma$ which  equals the
identity on a disk $D\subset \Sigma$
containing both points $p_1,p_2$. Thus $\theta$ lifts to a
diffeomorphism $\theta^\prime$ of $\Sigma_2$ which preserves the pull-back of 
$\phi_c$ to a spin structure on $\Sigma_2$.

The  
boundary circle $\partial D$ of $D$ can be 
viewed as a simple closed curve in $S-c$. Via the projection
$S-c\to S$ which identifies the two distinguished boundary components of $S-c$,
the curve $\partial D$ 
 projects to a separating simple closed curve in $S$
which decomposes $S$ into a one-holed torus $T$ containing $c$ and a surface of 
genus $g-1$ with connected boundary. The diffeomorphism 
$\theta^\prime$ lifts to a diffeomorphism
$\Theta$ of $S$ which is the identity on $T$.

Then $\Theta^*\phi$ is a spin structure on $S$ which defines the same
function on $H_1(S,\mathbb{Z})$ as $\phi$.
Using once more the result of Humphries and Johnson \cite{HJ89}
(see Theorem 3.9 of \cite{Sa19}), this implies that
$\Theta$ stabilizes $\phi$. As $\Theta$ projects to the mapping class 
of $\Sigma$ defined by
the diffeomorphism $\theta$, this 
shows surjetivity of the 
homomorphism $\Pi:{\rm Stab}(c)[\phi]/\mathbb{Z}\to {\rm Mod}(\Sigma)[\phi_c]$. 

On the other hand, by Proposition \ref{surject} the kernel of 
the homomorphism $\Pi$ also is contained
in ${\rm Stab}(c)[\phi]/\mathbb{Z}$. Together this completes the proof of the proposition.
\end{proof}

The next observation uses 
Proposition 4.9
of \cite{Sa19}. For its formulation, recall from Section \ref{graphsofcurves}
the definition of the graph ${\cal C\cal G}_1^+$. Its vertices
are nonseparating simple closed 
curves with prescribed value $\pm 1$ of the spin structure.  
The graph ${\cal C\cal G_1}^+$ is well defined if the genus $g$ of $S$ is at least two
although it may not have edges.

Note that in the statement of Proposition \ref{connect}, we allow that the 
surface $S$ has non-empty boundary, and we consider 
$\mathbb{Z}/r\mathbb{Z}$-spin structures where $r$ may be larger than $2g-2$.
This is crucial for  
an inductive approach towards higher spin structures via cutting surfaces
open along separating simple closed curves, and it is used in the proof of 
Theorem \ref{main3}. 


\begin{proposition}\label{connect}
Let $\phi$ be a $\mathbb{Z}/r\mathbb{Z}$-spin structure on a compact surface 
$S$ of genus $g\geq 2$ with empty or connected boundary. 
Then for any two directed edges 
$e_1,e_2$ of the graph ${\cal C\cal G}_1^+$ 
there exists a mapping class 
$\zeta\in {\rm Mod}(S)[\phi]$ with $\zeta(e_1)=e_2$.
In particular, the action of
  ${\rm Mod}(S)[\phi]$ on  ${\cal C\cal G}_1^+$ is vertex transitive.
\end{proposition}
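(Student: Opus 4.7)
The plan is to prove the directed edge transitivity claim in two stages: first, vertex transitivity of $\operatorname{Mod}(S)[\phi]$ on ${\cal C\cal G}_1^+$, and second, transitivity of each vertex stabilizer on the directed edges emanating from it. Composing the two, given directed edges $e_1 = (c_1, d_1)$ and $e_2 = (c_2, d_2)$, I would first find $\psi_1 \in \operatorname{Mod}(S)[\phi]$ with $\psi_1(c_1) = c_2$, and then find $\psi_2 \in \operatorname{Stab}(c_2)[\phi]$ with $\psi_2(\psi_1(d_1)) = d_2$; the composition $\zeta = \psi_2 \psi_1$ is the required mapping class, and the last claim of the proposition is subsumed.

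The first step is Proposition~4.9 of \cite{Sa19}, flagged in the introduction as the main external input borrowed from Salter's work. It gives transitivity of $\operatorname{Mod}(S)[\phi]$ on nonseparating simple closed curves with prescribed unsigned spin value $\pm 1$; since the vertex set of ${\cal C\cal G}_1^+$ consists precisely of such curves, this is vertex transitivity. For the second step I would proceed as follows. Cut $S$ along $c_2$ and cap off the two resulting boundary components to obtain a surface $\Sigma$ of genus $g - 1$ with empty or connected boundary. The condition $\phi(c_2) = \pm 1$, combined with the pair-of-pants relation (\ref{pairofpants}), ensures that the pullback $\hat\phi$ to $S - c_2$ takes the spin value of the boundary of an embedded disk on each new boundary circle, so $\phi$ descends to a spin structure $\phi_{c_2}$ on $\Sigma$. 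This is Proposition~\ref{parity} in the $\mathbb{Z}/2\mathbb{Z}$ setting; for general $r$ the descent still works, and the image of $\operatorname{Stab}(c_2)[\phi]$ in $\operatorname{Mod}(\Sigma)$ is controlled by Lemma~\ref{pointpush1} applied twice (first capping one boundary circle, then the other).

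A neighbor $d$ of $c_2$ in ${\cal C\cal G}_1^+(S, \phi)$ corresponds under capping off to a nonseparating simple closed curve in $\Sigma$ with $\phi_{c_2}(d) = \pm 1$, i.e., a vertex of ${\cal C\cal G}_1^+(\Sigma, \phi_{c_2})$; the nonseparating condition is preserved because $S - (c_2 \cup d)$ connected is equivalent to $d$ being nonseparating in $S - c_2$, hence in $\Sigma$. By Proposition~\ref{parity} (resp.\ its $r$-spin analog) the natural map $\operatorname{Stab}(c_2)[\phi]/\mathbb{Z} \to \operatorname{Mod}(\Sigma)[\phi_{c_2}]$ is surjective, so any mapping class in $\operatorname{Mod}(\Sigma)[\phi_{c_2}]$ sending $\psi_1(d_1)$ to $d_2$—obtained by applying the vertex transitivity statement on the pair $(\Sigma, \phi_{c_2})$—lifts through this surjection, and then through the $\mathbb{Z}$-quotient by the Dehn twist $T_{c_2}$ (which is automatically in $\operatorname{Stab}(c_2)[\phi]$), to the desired $\psi_2$.

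The most delicate point, and the one I would expect to be the main obstacle, is verifying that the descent to $(\Sigma, \phi_{c_2})$ and the surjectivity in the Birman sequence extend uniformly to $\mathbb{Z}/r\mathbb{Z}$-spin structures for arbitrary $r \geq 2$, rather than only $r = 2$ as in the cleanly stated Proposition~\ref{parity}. This is essentially why Lemma~\ref{pointpush1} was proved in the generality of arbitrary $r$: combining its two cases (according as $\phi$ of the capped-off boundary is $\pm 1$) with the analysis of the double Birman sequence in the proof of Proposition~\ref{surject} supplies exactly the surjectivity and descent statements needed for general $r$. A secondary point, easily checked, is that the composition argument handles the ``directed'' aspect of the edge automatically, since we move the ordered pair $(c_1, d_1)$ to $(c_2, d_2)$ in the prescribed order.
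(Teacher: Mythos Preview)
Your two–step reduction is a natural idea, but it has a genuine gap for $r>2$ and is not the route the paper takes.

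The paper proves the proposition \emph{directly}, without cutting or induction: for each directed edge $e=(c,d)$ it builds a geometric symplectic basis ${\cal B}(e)=\{a_1,b_1,\dots,a_g,b_g\}$ with $a_1=c$, $a_2=d$ and with the $\phi$--values of all remaining basis curves forced into a standard pattern (using Lemma~\ref{torus}, Dehn--twist adjustments, and Salter's $+_\epsilon$ moves, exactly as in the proof of Proposition~4.9 of \cite{Sa19}). Any two such bases are related by a diffeomorphism, and by Humphries--Johnson (Corollary~2.6 of \cite{HJ89}) this diffeomorphism lies in ${\rm Mod}(S)[\phi]$. This argument is uniform in $r$ and in $g\geq 2$.

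Your approach breaks at the descent step when $r>2$. After cutting $S$ along $c_2$, the two new boundary circles $C_1,C_2$ of $S-c_2$, oriented as boundary, satisfy $\{\phi(C_1),\phi(C_2)\}=\{+1,-1\}$: one side of $c_2$ inherits the chosen orientation and the other its reverse. For $r=2$ one has $+1=-1$ and both circles carry the disk--boundary value, so $\phi$ descends to the fully capped $\Sigma$; this is exactly what makes Proposition~\ref{parity} work. For $r>2$ the two values are distinct, and only the circle with value $-1$ can be capped spin--compatibly. Your assertion that ``$\hat\phi$ takes the spin value of the boundary of an embedded disk on each new boundary circle'' is therefore false for $r>2$, and Lemma~\ref{pointpush1} does not rescue it: part~(2) of that lemma says precisely that when $\phi(C)=+1$ the point--pushing group is \emph{not} contained in the $\phi$--stabilizer (only the finite--index subgroup $\Lambda_m$ is), which is the obstruction to descent rather than a cure for it. Consequently neither the induced spin structure $\phi_{c_2}$ on $\Sigma$ nor the surjectivity $\Pi^{-1}{\rm Mod}(\Sigma)[\phi_{c_2}]={\rm Stab}(c_2)[\phi]/\mathbb{Z}$ is available for general $r$.

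One might try to repair this by capping only the $-1$ side, landing in a surface $\Sigma^1$ of genus $g-1$ with a remaining boundary circle of $\phi$--value $+1$, and then invoking vertex transitivity on $\Sigma^1$. But that is essentially the proposition itself one genus down (and for $g=2$ it drops to genus~$1$, below the hypothesis), while the needed surjectivity onto ${\rm Mod}(\Sigma^1)[\phi^\prime]$ is not established anywhere in the paper for $r>2$. The direct symplectic--basis construction avoids all of this.
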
  
\begin{proof} The proof consists of an adjustment of the argument in the proof 
of Proposition 4.9 of \cite{Sa19}. 

Recall that 
a geometric symplectic basis for $S$
is a set $\{a_1,b_1,\dots,a_{2g},b_{2g}\}$ of simple closed curves
on $S$ such that $a_i,b_i$ intersect in a single point, and 
$a_i\cup b_i$ is disjoint from $a_j\cup b_j$ for $j\not=i$. 

A vertex of ${\cal C\cal G}_1^+$ is a simple closed curve $c$ on 
$S$ with $\phi(c)=\pm 1$. In the sequel we always orient such a vertex $c$ 
in such a way that $\phi(c)=1$. 
For a given directed edge $e$ of 
${\cal C\cal G}_1^+$ with ordered endpoints $c,d$, 
we aim at constructing 
a geometric symplectic basis ${\cal B}(e)$ such that
$a_1=c,a_2=d,
\phi(a_i)=0$ for $i\geq 3$, $\phi(b_i)=0$ for $i\leq g-1$ and
$\phi(b_g)=0$ or $1$ as predicted by the parity of $\phi$.
If such a basis ${\cal B}(e_1),{\cal B}(e_2)$ 
can be found for any two directed edges $e_1,e_2$ of ${\cal C\cal G}_1^+$
with ordered endpoints $c_1,d_1$ and $c_2,d_2$, then
there exists a diffeomorphism $\zeta$ of $S$ which maps ${\cal B}(e_1)$ to 
${\cal B}(e_2)$ and maps $c_1,d_1$ to $c_2,d_2$. 
The pullback $\zeta^*\phi$ of $\phi$ is a spin structure on $S$ whose values
on ${\cal B}(e_1)$ coincide with the values of $\phi$. By a result of 
Humphries and Johnson \cite{HJ89}, see Theorem 3.9 of \cite{Sa19},
this implies that $\zeta^*\phi=\phi$ and hence the isotopy class of
$\zeta$ is contained in ${\rm Mod}(S)[\phi]$ and maps 
the directed edge $e_1$ to the directed edge $e_2$.

To simplify further, 
choose any geometric symplectic
basis 
\[{\cal B}=\{\alpha_1,\beta_1,\dots,\alpha_g,\beta_g\}\] for $S$ 
with $\alpha_1=c$, $\alpha_2=d$. A small tubular neighborhood of 
$\alpha_i\cup \beta_i$ is a one-holed torus $T_i$ embedded in $S$. 
By Lemma \ref{torus}, for all $i\geq 3$ we may replace $\alpha_i$ by an oriented 
simple closed curve in $T_i$, again denoted by $\alpha_i$, which satisfies
$\phi(\alpha_i)=0$. 

Assume that $\beta_i$ $(i=1,2)$ 
is oriented in such a way that 
$\iota(\beta_i,\alpha_i)=1$ where
$\iota$ is the symplectic form. As $\phi(T_{\alpha_i}(\beta_i))=
\phi(\beta_i)+1$, 
via perhaps replacing $\beta_i$ by its image under a suitably chosen power of
a Dehn twist about $\alpha_i$ we may assume that $\phi(\beta_i)=0$.
Therefore for the construction of a geometric symplectic
basis ${\cal B}(e)$ with the required properties, it suffices 
to modify successively the curves $\beta_i$ 
$(i\geq 3)$ while keeping $\alpha_j$ $(j\geq 1)$ and $\beta_k$ for 
$k<i$ fixed
such that $\phi$ assumes the prescribed values on the
modified curves.

We follow the proof of Proposition 4.9 of \cite{Sa19}. For $1\leq i\leq g$ 
let $\delta_i$ be the boundary curve
of the torus $T_i$ which is a small tubular neighborhood of $\alpha_i\cup \beta_i$,  
equipped with the orientation as
an oriented boundary circle of $S-T_i$ $(i\geq 1)$. 
By homological coherence (Proposition 3.8 of \cite{Sa19}), 
we have $\phi(\delta_i)=1$ for all $i$.

Thus if $\epsilon$ is an embedded arc in $S$
connecting $\beta_3$ to
$\delta_1$ whose interior is 
disjoint from $\alpha_3$ and all $\delta_j$
for $j\not=3$, then  
$\phi(\beta_3+_\epsilon \delta_1)=\phi(\beta_3)+2$.
Moreover, $\beta_3+_\epsilon \delta_1$ is disjoint from 
$\delta_j$ for all $j\not= 3$.

A small tubular neighborhood of $\alpha_3\cup (\beta_3+_\epsilon \delta_1)$ is 
a one-holed torus $\hat T_3$ disjoint from the tori $T_i$ for $i\not=3$. 
Thus we can repeat this construction
with an arc connecting $\beta_3+_\epsilon \delta_1$ to $\delta_1$
which is disjoint from $\alpha_3$ and whose interior is 
disjoint from all $\delta_j$ for $j\not=3$. Repeating further if necessary,  
we can find a simple closed curve $\beta_3^\prime$ intersecting
$\alpha_3$ in a single point and disjoint from the curves
$\delta_j$ for $j\not= 3$ so that $\phi(\beta_3^\prime)\in \{0,1\}$.

Let $\delta_3^\prime$ be the boundary of a tubular neighborhood of
$\alpha_3\cup \beta_3^\prime$. Then $\delta_3^\prime$ is disjoint
from all the curves $\delta_j$ for $j\not= 3$. 
As in the proof of Proposition 4.9 of \cite{Sa19}, repeat this
procedure with the curve $\beta_4$ and the curves
$\delta_1,\delta_2,\delta_3^\prime,\dots,\delta_{g}$. In finitely many
steps 
we can change the geometric 
symplectic basis ${\cal B}$
to a geometric symplectic basis
${\cal B}^\prime=\{\alpha_1,\beta_1,\alpha_2,\beta_2,\alpha_3,
\beta_3^{\prime},\dots,\alpha_{g},\beta_{g}^{\prime}\}$
which fulfills $\phi(\beta_j^{\prime})=0$ or $1$ for all $3\leq j\leq g$.

It remains to further alter $\beta_j^\prime$ for $3\leq j\leq g-1$
to a nonseparating simple closed curve $\beta_j^{\prime\prime}$ with 
$\phi(\beta_j^{\prime\prime})=0$, and to alter
$\beta_g^\prime$ to a simple closed curve $\beta_g^{\prime\prime}$
with  
$\phi(\beta_{g}^{\prime\prime})=0$ or $1$ depending on the parity of
the $\mathbb{Z}/r\mathbb{Z}$-spin structure $\phi$. 
This construction is carried out in detail in the proof of Proposition 4.9 of \cite{Sa19}
and will not be presented here as it would require the introduction of a significant amount
of new notation. It takes place in a subsurface of $S$
of genus $g-2$ which is disjoint from
$\alpha_1,\beta_1,\alpha_2,\beta_2$ 
and contains $\alpha_i,\beta_i$ for $3\leq i\leq g$. 
The resulting geometric
symplectic basis has the properties we are looking for.
\end{proof}

\begin{remark}
The proof of Proposition \ref{connect} can also be used to show the
following. Under the assumption of the proposition, let $c,d\subset S$ 
be two nonseparating simple closed curves with $\phi(c)=\phi(d)=0$; then
there exists some $\zeta\in {\rm Mod}(S)[\phi]$ with $\zeta(c)=d$. In fact,
this case is more explicitly covered by Proposition 4.2 and Proposition 4.9 of 
\cite{Sa19}. 
\end{remark}

The next statement is an extension of Proposition \ref{connect} to surfaces
with more than one boundary component under some restrictions on the 
spin structure.

\begin{corollary}\label{twoconnect}
Let $\phi$ be a $\mathbb{Z}/r\mathbb{Z}$-spin structure on a compact surface 
$S$ of genus $g\geq 2$ 
with non-empty boundary which is induced from a spin structure $\phi^\prime$ on
a compact surface $\Sigma$ of genus $g$ with empty or connected boundary
by an inclusion $S\to \Sigma$
which maps each boundary component of $S$ to the boundary of an
embedded disk in $\Sigma$. 
Then for any two vertices $c,d$ of ${\cal C\cal G}_1^+$ there exists a mapping class
$\zeta\in {\rm Mod}(S)[\phi]$ with $\zeta(c)=d$. In particular, the action of 
${\rm Mod}(S)[\phi]$ is transitive on the vertices of ${\cal C\cal G}_1^+$. 
\end{corollary}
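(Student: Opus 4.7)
The plan is to reduce the statement to Proposition \ref{connect} applied to the surface $\Sigma$ with empty or connected boundary. First I would realize $S$ as the complement in $\Sigma$ of a disjoint collection of open disks $D_1,\dots,D_n$, so that $\partial S=\bigsqcup_i\partial D_i$. Any vertex $c$ of $\mathcal{CG}_1^+$ for $(S,\phi)$ is then automatically a vertex of the analogous graph for $(\Sigma,\phi')$: capping in the disks $D_i$ does not disconnect $S-c$, so $c$ remains nonseparating in $\Sigma$, and $\phi'(c)=\phi(c)\in\{\pm 1\}$ because $\phi$ is the restriction of $\phi'$.

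Applying Proposition \ref{connect} to $(\Sigma,\phi')$ yields $\zeta_0\in{\rm Mod}(\Sigma)[\phi']$ with $\zeta_0(c)=d$. It remains to modify $\zeta_0$ by postcomposition with some $\eta\in{\rm Mod}(\Sigma)[\phi']$ that fixes $d$, so that $\eta\zeta_0$ permutes the collection $\{\partial D_1,\dots,\partial D_n\}$ up to isotopy. Any such composition admits a representative preserving $\partial S$ setwise and hence restricts to an element of ${\rm Mod}(S)$, which automatically lies in ${\rm Mod}(S)[\phi]$ since $\phi=\phi'|_S$.

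To construct $\eta$ I would introduce a marked point $p_i$ inside each $D_i$ and work in a subsurface of $\Sigma$ disjoint from $d$. The crucial input is the normalization axiom $\phi'(\partial D_i)=1$, which via Lemma \ref{pointpush1}(2) identifies the spin-preserving point-pushing maps at each $p_i$ with the finite-index subgroup $\Lambda_m$ of $\pi_1$. Together with half-twists supported inside small disks in $\Sigma-d$ containing two of the marked points, which are automatically spin-preserving because they are supported in simply connected regions, these elements generate enough of the mapping class group of $\Sigma-d$ relative to the marked points to send the configuration $\{\zeta_0(p_i)\}$ back to $\{p_i\}$ via a spin-preserving mapping class fixing $d$.

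The main obstacle is to verify cleanly the transitivity of this spin-preserving subgroup on unordered $n$-tuples of disjoint points in $\Sigma-d$. A natural way to organize the argument is induction on $n$: the base case $n=0$ is Proposition \ref{connect}, while the inductive step first moves $\zeta_0(p_1)$ to $p_1$ by a spin-preserving point-pushing along an arc in $\Sigma-d$, then restricts to the cut surface around $p_1$ and invokes the inductive hypothesis, checking at each stage that the ambient spin structure restricts correctly to the smaller subsurface.
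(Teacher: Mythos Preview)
Your overall strategy --- reduce to Proposition~\ref{connect} on $\Sigma$, then correct by an element of the disk-/point-pushing kernel --- is exactly the paper's approach. However, you have invoked the wrong case of Lemma~\ref{pointpush1}, and this makes you believe the correction step is much harder than it actually is.

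The normalization axiom does give $\phi'(\partial D_i)=1$, but only when $\partial D_i$ is oriented as the boundary of the \emph{disk} $D_i$. Lemma~\ref{pointpush1} requires $C$ to carry the orientation induced from $S$, i.e.\ as a component of $\partial S$; this is the opposite orientation, so in fact $\phi(C)=-1$. Hence it is part~(1) of Lemma~\ref{pointpush1} that applies, not part~(2): the \emph{entire} point-pushing group $\pi_1(\Sigma)$ at each $p_i$ already lies in the spin-preserving subgroup. Iterating over the $n$ boundary components, the full kernel of ${\rm Mod}(S)\to{\rm Mod}(\Sigma)$ is contained in ${\rm Mod}(S)[\phi]$.

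With this correction your ``main obstacle'' evaporates: there is no restriction to $\Lambda_m$, no need for half-twists, and no transitivity of a proper subgroup to verify. The paper simply lifts $\zeta_0$ to some $\Theta\in{\rm Mod}(S)$ by representing it by a diffeomorphism equal to the identity on each $D_i$, notes that $\Theta\in{\rm Mod}(S)[\phi]$ and that $\Theta(c)=d'$ is isotopic to $d$ in $\Sigma$, and then picks any $\beta$ in $\ker\bigl({\rm Mod}(S)\to{\rm Mod}(\Sigma)\bigr)$ with $\beta(d')=d$. Since that entire kernel is spin-preserving, $\beta\Theta\in{\rm Mod}(S)[\phi]$ sends $c$ to $d$. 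Your inductive scheme would eventually succeed, but once the orientation is sorted out it is superfluous.
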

\begin{proof}
Let $\Psi:S\to \Sigma$ be the natural embedding. Let $c,d$ be vertices of the
graph ${\cal C\cal G}_1^+$ for the spin structure $\phi$ on $S$. 
Then $c,d$ are nonseparating simple closed curves and hence their images
$\Psi(c),\Psi(d)$ are nonseparating simple closed curves on $\Sigma$.
Furthermore, as $\phi$ is the pull-back of a spin structure $\phi^\prime$
on $\Sigma$, we have $\phi^\prime(\Psi(c))=
\phi^\prime(\Psi(d))=1$.

By Proposition \ref{connect}, 
there exists a mapping class
$\theta\in {\rm Mod}(\Sigma)(\phi^\prime)$ which maps $\Psi(c)$
to $\Psi(d)$. We can choose a diffeomorphism of $\Sigma$
representing $\theta$ which equals the identity on each component of
$\Sigma-S$. Thus there exists a lift 
$\Theta$ of $\theta$ to a mapping class of $S$. This mapping class 
is contained in ${\rm Mod}(S)[\phi]$, and it maps the
simple closed curve 
$c$ to a simple closed curve $d^\prime$ whose image under
 $\Psi$ is isotopic to $\Psi(d)$. 

Using once more the Birman exact sequence, this implies that there exists
a mapping class $\beta$ in the kernel of the homomorphism 
${\rm Mod}(S)\to {\rm Mod}(\Sigma)$  
which maps $d^\prime$ to $d$.
But by an iterated application of Lemma \ref{pointpush1}, 
this kernel is contained in ${\rm Mod}(S)[\phi]$
and hence $c$ can be mapped to $d$
by an element of ${\rm Mod}(S)[\phi]$. 
\end{proof}

The \emph{augmented Teichm\"uller space} $\overline{\cal T}(S)$ of 
the compact surface $S$ 
is the union of the
Teichm\"uller space with so-called \emph{boundary strata}. Each of these
boundary strata is defined by a non-empty system ${\cal C}$ of pairwise disjoint essential
simple closed curves. The stratum defined by such a curve system
can be thought of as the Teichm\"uller space of the surface obtained from 
$S$ by shrinking each component of ${\cal C}$ to a node. 
In other words, such a stratum is 
a complex manifold which is naturally biholomorphic to 
the Teichm\"uller
space of the surface obtained by cutting $S$ open along the
components of ${\cal C}$ and replacing each boundary component of the resulting
bordered surface by a puncture. 

Using Fenchel Nielsen coordinates, 
the augmented Teichm\"uller space can be equipped with
a natural topology. 
For this topology, the usual Teichm\"uller space embeds into $\overline{\cal T}(S)$
as an open dense subset. Furthermore, the inclusion of the Teichm\"uller space
of a punctured surface defined by the curve system
${\cal C}$ onto a boundary stratum of $\overline{\cal T}(S)$ also
is an embedding. We refer to \cite{Wol10} for an detailed description and for 
a discussion of the following

\begin{theorem}\label{augmented}
The augmented Teichm\"uller space $\overline{\cal T}(S)$ 
is a non locally compact stratified space. The mapping class group ${\rm Mod}(S)$ of 
$S$ acts on $\overline{\cal T}(S)$, with quotient the Deligne Mumford compactification of 
the moduli space of curves of genus $g$. 
\end{theorem}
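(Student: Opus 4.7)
My plan is to follow Wolpert's treatment in three steps. First, I fix a pants decomposition $\cal P=\{\gamma_1,\dots,\gamma_{3g-3}\}$ of $S$ and the associated Fenchel-Nielsen length-twist coordinates $(\ell_i,\tau_i)$, giving a diffeomorphism $\cal T(S)\cong (\mathbb{R}^+)^{3g-3}\times \mathbb{R}^{3g-3}$. For a subsystem $\cal C\subseteq \cal P$ I define the stratum $\cal T_{\cal C}$ to consist of points with $\ell_i=0$ for $\gamma_i\in \cal C$, with the corresponding twists $\tau_i$ undefined, and identify it with the Teichm\"uller space of the (possibly disconnected) surface obtained by pinching every $\gamma_i\in \cal C$ and replacing the resulting boundary components by cusps; this uses the standard degeneration of a hyperbolic pair of pants with vanishing boundary length. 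For a general curve system $\cal C$, the same definition is applied using any pants decomposition extending $\cal C$. A sequence $X_n\in \cal T(S)$ is declared to converge to $Y\in \cal T_{\cal C}$ when $\ell_i(X_n)\to 0$ for $\gamma_i\in \cal C$ while the remaining length-twist coordinates converge to those of $Y$. A careful comparison between different Fenchel-Nielsen systems shows that the resulting topology on $\overline{\cal T}(S)$ does not depend on the chosen pants decomposition, so the stratification is well defined.

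Second, non-local compactness follows from the unconstrained twist parameters. At any $Y\in \cal T_{\cal C}$ the twist $\tau_i$ for $\gamma_i\in \cal C$ ranges freely over $\mathbb{R}$, so any neighborhood of $Y$ contains a family $X_n\in \cal T(S)$ with $\ell_i(X_n)\to 0$ and $\tau_i(X_n)\to \infty$. Since the hyperbolic length of any simple closed curve transverse to $\gamma_i$ grows without bound with $\tau_i$, no subsequence of $X_n$ converges in $\overline{\cal T}(S)$, and hence $Y$ admits no compact neighborhood.

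Third, I analyze the ${\rm Mod}(S)$-action. The mapping class group permutes the strata via its action on isotopy classes of curve systems. On each stratum $\cal T_{\cal C}$ the stabilizer of $\cal C$ in ${\rm Mod}(S)$ acts through the mapping class group of the noded surface, and the kernel of this action is the free abelian subgroup generated by the Dehn twists $T_{\gamma_i}$ for $\gamma_i\in \cal C$. These twists act on approaching points of $\cal T(S)$ by integer translations in $\tau_i$, so the combined quotient $\overline{\cal T}(S)/{\rm Mod}(S)$ is Hausdorff. Combining this with Bers's identification $\cal T(S)/{\rm Mod}(S)=\cal M_g$ shows that the quotient is stratified by moduli spaces of noded Riemann surfaces of genus $g$.

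The main obstacle is the identification of this quotient with the Deligne-Mumford compactification $\overline{\cal M}_g$ as a complex-analytic space rather than merely as a topological stratified space. This requires matching the real-analytic Fenchel-Nielsen picture near a boundary stratum with the complex plumbing construction defining $\overline{\cal M}_g$, and it rests on the Masur-Wolpert estimates comparing geodesic length near a pinched curve to the modulus of the corresponding plumbing parameter; this is the technical heart of the treatment in \cite{Wol10} and I would defer the full argument to that reference.
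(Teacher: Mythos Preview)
Your proposal is correct and aligns with the paper's treatment: the paper does not give its own proof of this theorem but simply refers to \cite{Wol10}, and your sketch follows exactly Wolpert's Fenchel--Nielsen approach before likewise deferring the hard analytic identification with $\overline{\cal M}_g$ to that reference. If anything, you have supplied more detail than the paper itself, which treats the result as background.
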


Fix again a  $\mathbb{Z}/2\mathbb{Z}$-spin
structure $\phi$ on a surface $S$ of genus $g\geq 2$.
Define the 
\emph{spin Teichm\"uller space} ${\cal T}_{\rm spin}(S)$ 
to be the Teichm\"uller space 
of $S$ together with this spin structure. The group
${\rm Mod}(S)[\phi]$ acts on
${\cal T}_{\rm spin}(S)$ as a group of biholomorphic transformations,
with quotient the \emph{spin moduli space} 
${\cal M}_\phi={\cal T}(S)/{\rm Mod}(S)[\phi]$.

We can define an augmented spin 
Teichm\"uller space $\overline{\cal T}_{\rm spin}(S)$
 as the union of spin Teichm\"uller space  
 with all strata of augmented Teichm\"uller space which are defined by 
systems of 
nonseparating simple closed curves $c$ on $S$ with $\phi(c)=1$. 
Equipped with the subspace topology, 
this is a subspace of $\overline{\cal T}(S)$ which is invariant under
the action of the spin mapping class group. As a corollary of the discussion in 
this section, we have

\begin{corollary}\label{partialbord}
The quotient $\overline{\cal T}_{\rm spin}(S)/{\rm Mod}(S)[\phi]$ 
is a partial bordification of the spin moduli space ${\cal T}_{\rm spin}(S)/{\rm Mod}(S)[\phi]$. 
Its boundary contains the spin moduli
space of the same parity on a surface of genus $g-1$ with two marked points (punctures) 
as an open dense subset.
\end{corollary}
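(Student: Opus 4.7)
The plan is to combine the stratified structure of $\overline{\cal T}(S)$ recalled in Theorem \ref{augmented} with the transitivity and stabilizer computations supplied by Propositions \ref{connect} and \ref{parity}. First I would check that $\overline{\cal T}_{\rm spin}(S)$ is invariant under ${\rm Mod}(S)[\phi]$: an element preserving $\phi$ permutes the nonseparating curves $c$ with $\phi(c)=\pm 1$, and hence permutes the admissible boundary strata. Proper discontinuity and the statement that ${\cal T}(S)/{\rm Mod}(S)[\phi]$ is open and dense in the quotient both follow from the corresponding facts for $\overline{\cal T}(S)/{\rm Mod}(S)$ via the finite index inclusion ${\rm Mod}(S)[\phi]\subset {\rm Mod}(S)$; density also uses the local Fenchel--Nielsen picture, in which every boundary point is a limit of smooth surfaces obtained by reopening pinched nodes. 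This establishes the partial bordification assertion.

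Next I would identify the codimension-one boundary. Fix a nonseparating simple closed curve $c\subset S$ with $\phi(c)=1$ and let $\Delta_c\subset \overline{\cal T}_{\rm spin}(S)$ denote the corresponding stratum, which by Theorem \ref{augmented} is biholomorphic to the Teichm\"uller space of the twice-punctured surface $\Sigma_2$ obtained by cutting $S$ along $c$. The Dehn twist $T_c$ generates an infinite cyclic subgroup of ${\rm Stab}(c)[\phi]$ which acts trivially on $\Delta_c$, and by Proposition \ref{parity} the quotient ${\rm Stab}(c)[\phi]/\mathbb{Z}$ coincides with $\Pi^{-1}{\rm Mod}(\Sigma)[\phi_c]$, where $\Sigma$ is the genus $g-1$ surface obtained by capping off the two new punctures and $\phi_c$ is the induced spin structure, of the same parity as $\phi$. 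Hence $\Delta_c/{\rm Stab}(c)[\phi]$ is precisely the spin moduli space of $(\Sigma,\phi_c)$ with two marked points, of the required parity. Proposition \ref{connect} now ensures that ${\rm Mod}(S)[\phi]$ acts transitively on the set of such curves $c$, so in $\overline{\cal T}_{\rm spin}(S)/{\rm Mod}(S)[\phi]$ all one-curve strata descend to a single copy of this spin moduli space.

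Finally I would show that the union of these one-curve strata is open and dense in the boundary of the quotient. Openness is immediate from the local stratification: any point of a deeper stratum is a limit of points in a one-curve stratum obtained by reopening all but one of the pinched nodes. The main obstacle I anticipate is verifying that the closure relation behaves well inside the restricted space $\overline{\cal T}_{\rm spin}(S)$, namely that whenever a multi-curve system $\{c_1,\dots,c_k\}$ defines a stratum of $\overline{\cal T}_{\rm spin}(S)$ each singleton $\{c_i\}$ also defines an admissible one-curve stratum. This is built into the definition since every $c_i$ satisfies $\phi(c_i)=\pm 1$ by hypothesis, so no boundary points are lost when passing to the restricted bordification. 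Combined with the identification from the previous step, this yields the corollary.
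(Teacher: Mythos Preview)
Your proposal is correct and follows precisely the route the paper intends: the paper states this result without explicit proof, labeling it ``a corollary of the discussion in this section,'' and the ingredients you invoke---Proposition~\ref{connect} for transitivity on one-curve strata, Proposition~\ref{parity} for the identification ${\rm Stab}(c)[\phi]/\mathbb{Z}=\Pi^{-1}{\rm Mod}(\Sigma)[\phi_c]$ and the parity preservation, together with the stratified structure from Theorem~\ref{augmented}---are exactly the results the section has assembled for this purpose. One minor point: in your final paragraph you write ``Openness is immediate'' and then describe what is really the density argument (reopening all but one node); the openness of the union of one-curve strata in the boundary comes rather from their being the top-dimensional boundary strata, but this is a cosmetic slip and the underlying reasoning is sound.
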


\begin{remark}
Corollary \ref{partialbord} can be thought of as describing a specific subset
of a Deligne Mumford compactification of the moduli space of curves with
a fixed spin structure. Such a Deligne Mumford compactification was constructed
by Cornalba \cite{Co89}. 
\end{remark}

\section{Structure of the spin mapping class group of odd parity}
\label{structureof}

The goal of this section is to prove the first part of Theorem \ref{main2}.

We begin with some additional information on the spin mapping class
 group. 
Fix a 
$\mathbb{Z}/r\mathbb{Z}$-spin structure $\phi$ 
on a closed surface $\Sigma_g$ of genus $g$ for some $r\geq 2$.
For a simple closed curve $c$ on $\Sigma_g$ with 
$\phi(c)=\pm 1$, this spin structure restricts to a spin structure
on the surface 
$\Sigma_{g-1}^2$ 
of genus $g-1$ with two boundary circles $c_1,c_2$ 
obtained by cutting $\Sigma_g$ open along $c$. We denote this spin
structure again by $\phi$. Define the 
group $\Gamma_{g-1}^2$ to be the following
quotient of the spin mapping class group 
${\rm Mod}(\Sigma_{g-1}^2)[\phi]$. 

The group ${\rm Mod}(\Sigma_{g-1}^2)[\phi]$ contains 
a rank two free abelian central subgroup generated by the
$r$-th powers of the left Dehn twists $T_{c_1},T_{c_2}$ 
about the 
boundary circles $c_1,c_2$ of $\Sigma_{g-1}^2$.  Define 
$\Gamma_{g-1}^2={\rm Mod}(\Sigma_{g-1}^2)[\phi]/\mathbb{Z}$
where the infinite cyclic subgroup $\mathbb{Z}$ is generated
by $T_{c_1}^rT_{c_2}^{-r}$. Then 
$\Gamma_{g-1}^2$ is isomorphic to the stabilizer 
in ${\rm Mod}(\Sigma_g)[\phi]$ of the curve $c$. 
Note that up to isomorphism, 
the group $\Gamma_{g-1}^2$ 
does not depend on the vertex $c\in {\cal C\cal G}_1$. Namely,
by  Proposition \ref{connect}, 
the stabilizers 
in ${\rm Mod}(\Sigma_g)[\phi]$
of nonseparating simple closed
curves $c$ with $\phi(c)=\pm 1$  
are all conjugate and hence isomorphic. 
 
Observe that the group $\Gamma_{g-1}^2$ is an infinite cyclic central extension of 
a finite index subgroup of the mapping class
group 
of a surface $\Sigma_{g-1,2}$ 
of genus $g-1$ with two punctures. 
Thus it makes sense
to talk about its action on isotopy classes of 
essential curves on the surfaces 
$\Sigma_{g-1,2}$ and $\Sigma_{g-1}^2$. The map $\Sigma_{g-1}^2\to 
\Sigma_{g-1,2}$ which contracts each boundary component to a puncture
defines a bijection on such isotopy classes.  

We have

\begin{proposition}\label{spinmap}
Let $\phi$ be a $\mathbb{Z}/r\mathbb{Z}$-spin structure on 
a closed surface $\Sigma_g$ of genus $g\geq 3$.  
There is a commutative diagram\\
\begin{equation}
\begin{tikzcd}
\Gamma_{g-1}^2 \arrow[r,"\iota_1"] \arrow[dr, "\iota_2"]
& \Gamma_{g-1}^2*_A\Gamma_{g-1}^2\rtimes \mathbb{Z}/2\mathbb{Z}
\arrow[d, "\rho"] \\
& {\rm Mod}(\Sigma_g)[\phi]
\end{tikzcd} 
\end{equation}
where the homomorphisms $\iota_1,\iota_2$ are inclusions, and the 
homomorphism $\rho$ is surjective. 
The subgroup $A$ of $\Gamma_{g-1}^2$ 
is the stabilizer in $\Gamma_{g-1}^2$ 
of a nonseparating simple closed curve $d$ 
on $\Sigma_{g-1}^2$ with $\phi(d)=\pm 1$. The
group $\mathbb{Z}/2\mathbb{Z}$ acts on $\Gamma_{g-1}^2*_A\Gamma_{g-1}^2$
by exchanging the two factors, and it acts as an automorphism
on $A$.  
\end{proposition}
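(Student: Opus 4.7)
The proof is an application of a Bass–Serre style argument to the action of ${\rm Mod}(\Sigma_g)[\phi]$ on the connected graph ${\cal C\cal G}_1^+$.

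\emph{Setup.} Fix an edge $(c,d)$ in ${\cal C\cal G}_1^+$, which is available and connected by Corollary \ref{connected}. By Proposition \ref{connect}, the group ${\rm Mod}(\Sigma_g)[\phi]$ acts transitively on directed edges of ${\cal C\cal G}_1^+$, so there exists a mapping class $\tau\in {\rm Mod}(\Sigma_g)[\phi]$ with $\tau(c)=d$ and $\tau(d)=c$. Let $H_c,H_d\subset {\rm Mod}(\Sigma_g)[\phi]$ denote the stabilizers of $c,d$ respectively; then $H_d=\tau H_c\tau^{-1}$, and by the discussion preceding the proposition, $H_c$ is naturally identified with $\Gamma_{g-1}^2$. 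Their intersection $A=H_c\cap H_d$ is the stabilizer of the ordered pair $(c,d)$, and inside $H_c\cong \Gamma_{g-1}^2$ is exactly the stabilizer of the simple closed curve $d$ on $\Sigma_g-c=\Sigma_{g-1}^2$, which is nonseparating with $\phi(d)=\pm 1$ because $(c,d)$ is an edge of ${\cal C\cal G}_1^+$. This identifies $A$ as described in the proposition.

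\emph{Construction of $\rho$.} The inclusions $H_c\hookrightarrow {\rm Mod}(\Sigma_g)[\phi]$ and $H_d\hookrightarrow {\rm Mod}(\Sigma_g)[\phi]$ agree on the common subgroup $A$, so the universal property of the amalgamated free product produces a homomorphism $H_c*_A H_d\to {\rm Mod}(\Sigma_g)[\phi]$. Composing with the isomorphism $\Gamma_{g-1}^2\to H_d$ induced by conjugation by $\tau$ on the second factor yields a homomorphism $\Gamma_{g-1}^2*_A \Gamma_{g-1}^2\to {\rm Mod}(\Sigma_g)[\phi]$, which restricts on the first factor to the inclusion $\iota_2$; this gives the required commutativity. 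The generator of $\mathbb{Z}/2\mathbb{Z}$ is sent to $\tau$. Conjugation by $\tau$ exchanges $H_c$ and $H_d$ and preserves $A$; this is the $\mathbb{Z}/2\mathbb{Z}$-action on the amalgamated product described in the statement.

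\emph{Surjectivity.} This is the standard path-lifting argument. Given any $g\in {\rm Mod}(\Sigma_g)[\phi]$, the vertex $g(c)$ can, by connectedness of ${\cal C\cal G}_1^+$, be joined to $c$ by an edge path $c=c_0,c_1,\dots,c_n=g(c)$. By transitivity of ${\rm Mod}(\Sigma_g)[\phi]$ on directed edges of ${\cal C\cal G}_1^+$, each consecutive edge $(c_{i-1},c_i)$ is the image of $(c,d)$ under some element; this element can be built from a product of conjugates of elements of $H_c$ and of $\tau$, hence lies in the image of $\rho$. Chaining these together produces an element $\beta\in {\rm image}(\rho)$ with $\beta(c)=g(c)$, so $\beta^{-1}g\in H_c\subset {\rm image}(\rho)$, giving $g\in {\rm image}(\rho)$.

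\emph{Main obstacle.} The delicate point is verifying well-definedness of $\rho$ on the semidirect product. Since $\tau^2$ fixes both $c$ and $d$, it lies in $A$, but for the semidirect product structure to be literally realized by $\tau$ we need the conjugation action of $\tau$ on $\Gamma_{g-1}^2*_A\Gamma_{g-1}^2$ to have order exactly two and to act on $A$ as an involution. This requires choosing $\tau$ carefully (or adjusting the identification of the two factors) so that the automorphism of $A$ induced by conjugation by $\tau^2\in A$ is trivial modulo the identifications built into the amalgamated product, and that the exchange of factors by $\tau$ is compatible with the chosen identification $H_d=\tau H_c\tau^{-1}$. Once this compatibility is verified, $\rho$ descends from the free product of $H_c$ and $\langle\tau\rangle$ to the desired semidirect product of the amalgamated free product with $\mathbb{Z}/2\mathbb{Z}$, and the statement follows.
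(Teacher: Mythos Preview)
Your approach is the same Bass--Serre style argument the paper uses: act on the connected graph ${\cal C\cal G}_1^+$, identify vertex and edge stabilizers, and deduce surjectivity by walking along an edge path. The setup and the construction of $\rho$ are fine and match the paper.

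The gap is in your surjectivity paragraph. You write that each edge $(c_{i-1},c_i)$ is the image of $(c,d)$ under some element of ${\rm Mod}(\Sigma_g)[\phi]$, and then assert that ``this element can be built from a product of conjugates of elements of $H_c$ and of $\tau$, hence lies in the image of $\rho$.'' That assertion is exactly what has to be proved; invoking transitivity of the full group ${\rm Mod}(\Sigma_g)[\phi]$ on directed edges (Proposition~\ref{connect}) only gives you an element of the ambient group, not of $\rho(\Gamma)$. What is missing is the inductive mechanism that forces this element into $\rho(\Gamma)$. The paper supplies it via Corollary~\ref{twoconnect}: once you know (by induction) that the stabilizer of $c_i$ lies in $\rho(\Gamma)$, you apply Corollary~\ref{twoconnect} to the cut surface $\Sigma_g-c_i$ to find, \emph{inside that stabilizer}, an element sending $c_{i-1}$ to $c_{i+1}$. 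This is the step that propagates membership in $\rho(\Gamma)$ along the path; transitivity on directed edges of the closed surface does not give it to you. Without Corollary~\ref{twoconnect} (or an equivalent statement about transitivity of vertex stabilizers on adjacent vertices), the argument is circular.

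Your ``main obstacle'' paragraph raises a fair point, and the paper does not dwell on it either: it simply takes $\Phi$ to be any element of the stabilizer of $c\cup d$ in ${\rm Mod}(\Sigma_g)[\phi]$ that exchanges $c$ and $d$, and treats the $\mathbb{Z}/2\mathbb{Z}$ as acting by the abstract swap automorphism of the amalgam. For the purposes of the proposition (and its applications downstream), what matters is only that the subgroup of ${\rm Mod}(\Sigma_g)[\phi]$ generated by $H_c$, $H_d$, and such an element is everything; the precise bookkeeping of whether $\tau^2=1$ is not needed.
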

\begin{proof}
Fix a pair of nonseparating simple closed disjoint curves 
$c,d$ on $\Sigma_g$ with $\phi(c)=\phi(d)=\pm 1$ which are connected
by an edge in the graph ${\cal C\cal G}_1^+$, that is, so that
$\Sigma_g-(c\cup d)$ is connected. 
Let $\Gamma_c,\Gamma_d\subset {\rm Mod}(\Sigma_g)[\phi]$ be the stabilizers of 
$c,d$ in the spin mapping class group of $\Sigma_g$. By Corollary \ref{twoconnect}, 
these groups 
are naturally isomorphic to the group $\Gamma_{g-1}^2$, and they
intersect in the index two subgroup $A=\Gamma_c\cap \Gamma_d$ 
of the stabilizer
of $c\cup d$ in 
${\rm Mod}(\Sigma_g)[\phi]$ consisting of all elements which preserve both $c,d$
individually. 
The full stabilizer of $c\cup d$ in ${\rm Mod}(\Sigma_g)[\phi]$ 
is a $\mathbb{Z}/2\mathbb{Z}$
extension of $\Gamma_c\cap \Gamma_d$, where the generator 
$\Phi$ of $\mathbb{Z}/2\mathbb{Z}$ acts as involution on 
$A=\Gamma_c\cap \Gamma_d$ exchanging $c$ and $d$. This involution 
extends to an involution of $\Gamma_c*_A\Gamma_d$ 
exchanging the two subgroups $\Gamma_c,\Gamma_d$.

By the universal property of free amalgamated products, 
there is 
a homomorphism 
\[\rho:\Gamma=\Gamma_c*_A\Gamma_d\rtimes \mathbb{Z}/2\mathbb{Z}
\to {\rm Mod}(\Sigma_g)[\phi]. \]
All we need to show is that $\rho$ is surjective, that is, that 
$\rho(\Gamma)= 
{\rm Mod}(\Sigma_g)[\phi]$.

As ${\rm Mod}(\Sigma_g)[\phi]$ acts transitively on the vertices of the graph 
${\cal C\cal G}_1^+$, for this it suffices to show that its subgroup 
$\rho(\Gamma)$ acts transitively on the vertices of ${\cal C\cal G}_1^+$
as well. Namely, by construction, the stabilizer of the vertex
$c$ of ${\cal C\cal G}_1^+$ in $\rho(\Gamma)$ coincides with its
stabilizer in ${\rm Mod}(\Sigma_g)[\phi]$. As $\rho(\Gamma)$ is a subgroup
of ${\rm Mod}(\Sigma_g)[\phi]$, this then implies equality.

To show transitivity of the action of $\rho(\Gamma)$ on the vertices
of ${\cal C\cal G}_1^+$ let $v\in {\cal C\cal G}_1^+$ be any vertex.
By Corollary \ref{connected}, 
the graph ${\cal C\cal G}_1^+$ is connected and hence we can find an edge path
$(c_i)\subset {\cal C\cal G}_1^+$ connecting $c_0=c$ to $c_k=v$. We also may assume that
$c_1=d$.

By the assumption $\phi(d)=\pm 1$, 
for one of the two boundary components $d_1,d_2$ of $\Sigma_g-d$, equipped with 
the orientation as a boundary component of $\Sigma_g-d$, say the component 
$d_1$, we have $\phi(d_1)=-1$. Thus we can attach a disk $D$ to $c_1$ and obtain
a surface $\Sigma^\prime$ with spin structure $\phi^\prime$ which induces 
the spin structure $\phi$ on $\Sigma_g-d$. As a consequence, 
the restriction of $\phi$
to $\Sigma_g-d$ fulfills the hypothesis in Corollary \ref{twoconnect}. 
As $c=c_0$ and $c_2$ are nonseparating 
simple closed curves in $\Sigma_g-d$ with 
$\phi(c)=\phi(c_2)=\pm 1$, 
Corollary \ref{twoconnect} shows that 
there exists an element 
$\Psi_1\in \Gamma_d\subset \rho(\Gamma)$ such that $\Psi_1(c)=c_2$. Then the stabilizer of 
$c_2$ in ${\rm Mod}(\Sigma_g)[\phi]$ equals $\Psi_1\Gamma_c\Psi_1^{-1}$ and hence
it is contained in $\rho(\Gamma)$. 
Thus we can apply 
Corollary \ref{twoconnect} to $\Psi_1\Gamma_c\Psi_1^{-1}$ and find an element
$\Psi_2\in \rho(\Gamma)$ which maps $c_1$ to $c_3$. 
Proceeding inductively and using the fact
that $\Gamma_c$ is conjugate to $\Gamma_d$ in $\rho(\Gamma)$ 
by the generator of the subgroup $\mathbb{Z}/2\mathbb{Z}$,  
this completes the proof of the proposition.
\end{proof}

Recall from the introduction the definition of
an admissible curve system on a closed surface $\Sigma_g$ of genus
$g\geq 2$.
The mapping class group of $\Sigma_g$ naturally acts on the family of all 
admissible curve systems on $\Sigma_g$. 
Recall also that the curve diagram of an admissible curve system is
a finite tree.


Since the curve diagram of an admissible curve system ${\cal C}$ 
is connected, 
each curve $c\in {\cal C}$ 
intersects at least one other simple closed curve on $\Sigma_g$ transversely
in a single point and hence it is nonseparating.

We need some technical information on admissible curve
systems. To this end let ${\cal C}$ be any admissible
curve system on an oriented surface $S$.
We require that the boundary of $S$ is empty, but we
allow for the moment that $S$ has punctures. For admissibility, 
we require
that all complementary components of ${\cal C}$ are either
topological disks or once punctured topological disks.

The union $\cup\{c\mid c\in {\cal C}\}$ is an
embedded graph $G$ in $S$ whose vertices are the intersection
points between the curves from ${\cal C}$.
Choose a basepoint
$x\in G$ which is contained in the interior of an edge of $G$.
This edge is contained in a simple closed curve $c_0\in {\cal C}$ which defines
a distinguished vertex $v_0$ 
in the curve diagram of ${\cal C}$.

Construct inductively a family $L$ of homotopy classes of 
loops in $G$ based at $x$ as follows.
Let $L_0$ be the family consisting of the two 
based loop which go once around
the simple closed curve
$c_0\in {\cal C}$ containing $x$ in either direction. 
Assume by induction that for some $k\geq 1$ 
we defined
a system of based loops $L_{k-1}$. 
Let $\{c_{k_1},\dots,c_{k_s}\}\subset {\cal C}$ 
be the curves in ${\cal C}$
whose distance in the curve diagram to the distinguished vertex $v_0$
equals $k$. Define
\[L_k=\{T_{c_{k_u}}^{\pm 1}d\mid u\leq s,d\in L_{k-1}\}\]
and let $L=L_b$ where $b\geq 1$ is the maximal distance of a vertex
in the curve diagram of ${\cal C}$ 
to the distinguished vertex $v_0$.

The following appears implicitly in \cite{PV96} and explicitly as 
Lemma 9.3 of \cite{Sa19}. 

\begin{lemma}\label{generate}
  The loops from the system $L$ generate the fundamental group 
  $\pi_1(S,x)$ of $S$.
\end{lemma}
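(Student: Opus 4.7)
The plan is an induction on the distance $k$ in the curve diagram tree, showing that the subgroup $H_k\leq\pi_1(S,x)$ generated by $L_0\cup L_1\cup\cdots\cup L_k$ contains a loop winding once around every curve of ${\cal C}$ at distance at most $k$ from the distinguished vertex $v_0$. Applying this at $k=b$ will complete the proof, provided these ``around-$c$'' loops generate $\pi_1(S,x)$. This reduction is handled first: since each complementary component of $G$ in $S$ is either a disk or a once-punctured disk, $\pi_1(S,x)$ is a quotient of $\pi_1(G,x)$ by the boundaries of the unpunctured disk components. Fixing, for each $c\in{\cal C}$, a path $\alpha_c$ in $G$ from $x$ to a point of $c$, and setting $\gamma_c=\alpha_c\cdot c\cdot\alpha_c^{-1}$, a standard rank computation using admissibility and the tree structure of the curve diagram shows that $\{\gamma_c:c\in{\cal C}\}$ generates $\pi_1(S,x)$.

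For the induction itself, the base case $k=0$ is immediate since $L_0$ already contains the two orientations of $\gamma_{c_0}$. For the inductive step, take a curve $c$ at distance $k$ from $v_0$. Because the curve diagram is a tree, $c$ has a unique neighbor $c'$ at distance $k-1$ in the diagram, and they meet transversely in a single point $p$. By the inductive hypothesis $\gamma_{c'}\in H_{k-1}$, and by construction $T_c^{\pm 1}(\gamma_{c'})\in L_k\subseteq H_k$. Choosing $\alpha_c$ as an extension of $\alpha_{c'}$ by a short arc of $c$ starting at $p$, the standard local computation of a Dehn twist at a single transverse intersection yields
\[T_c(\gamma_{c'})=\gamma_{c'}\cdot\gamma_c^{\pm 1}\]
(up to conjugation by an element already in $H_{k-1}$). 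Hence $\gamma_c\in H_k$. Iterating this procedure up to $k=b$ gives $\gamma_c\in H_b$ for every $c\in{\cal C}$, and by the reduction step $H_b=\pi_1(S,x)$.

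The main obstacle is pinning down the Dehn twist identity with the correct conjugating factor in the inductive step. This requires choosing the connecting paths $\alpha_c$ coherently across the tree --- for example, by propagating outward from $\alpha_{c_0}$ and, at each edge of the curve diagram, extending by a short arc across the corresponding intersection point. With such a compatible family, the action of $T_c$ on $\gamma_{c'}$ is visibly a local modification at $p$ that inserts $\gamma_c^{\pm 1}$ next to $\gamma_{c'}$ in the loop, exactly as the induction requires. The reduction step in the first paragraph is also not entirely automatic but follows from a direct Van Kampen argument exploiting that the curve diagram is a tree.
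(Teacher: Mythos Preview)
The paper does not prove this lemma; it simply records that the statement appears implicitly in \cite{PV96} and explicitly as Lemma~9.3 of \cite{Sa19}. So there is no in-paper argument to compare against --- your write-up is an attempted reconstruction of the proof behind those citations, and the overall strategy (reduce to showing that the conjugated curve-loops $\gamma_c$ generate $\pi_1(S,x)$, then manufacture each $\gamma_c$ from the Dehn-twist recursion by induction on tree-distance) is indeed the standard one.

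There is, however, a genuine gap in your inductive step. You assert ``by construction $T_c^{\pm1}(\gamma_{c'})\in L_k\subseteq H_k$'', but membership in $L_k$ requires $\gamma_{c'}\in L_{k-1}$, not merely $\gamma_{c'}\in H_{k-1}$. This is fine at $k=1$ (since $\gamma_{c_0}\in L_0$), but for $k\geq 2$ your induction hypothesis only places $\gamma_{c'}$ in the \emph{subgroup} $H_{k-1}$, not in the finite set $L_{k-1}$, so the displayed containment is unjustified.

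The repair is to prove instead that $T_c(H_{k-1})\subseteq H_k$. Any loop in $L_j$ with $j\leq k-2$ is, by its very construction, represented by a based curve supported in a regular neighbourhood of the curves at distance at most $j$ from $v_0$; since $c$ (at distance $k$) meets only its unique neighbour at distance $k-1$, this neighbourhood is disjoint from $c$ and $T_c$ fixes the loop. Elements of $L_{k-1}$ are sent into $L_k$ by definition. Hence $T_c$ carries each generator of $H_{k-1}$ into $H_k$, giving $T_c(\gamma_{c'})\in H_k$; combined with your local Dehn-twist identity this yields $\gamma_c\in H_k$ and the induction closes. A second, smaller point: the paper literally sets $L=L_b$, whereas your $H_b$ is generated by $L_0\cup\cdots\cup L_b$. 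The patched argument proves $\langle\bigcup_kL_k\rangle=\pi_1(S,x)$, which is what the application in Lemma~\ref{generatepointpush} actually needs and what the cited sources establish.
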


As a consequence we obtain (see Lemma 9.4 of \cite{Sa19}). 

\begin{lemma}\label{generatepointpush}
  Let ${\cal C}$ be an admissible curve system on a surface
  $S$, possibly with punctures. Let $p$ be a puncture of $S$ and
  assume that there are two curves $c_1,c_2\in {\cal C}$ which bound
  a once punctured annulus, with $p$ as puncture. Then the
  subgroup $\Gamma$ of ${\rm Mod}(S)$ generated by the Dehn twists about the
  curves from the curve system ${\cal C}$ contains the
  kernel of the homomorphism
  ${\rm Mod}(S)\to {\rm Mod}(\Sigma)$ where $\Sigma$ is obtained from
  $S$ by forgetting $p$.
\end{lemma}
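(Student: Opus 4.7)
By the Birman exact sequence for the forgetful map $S\to\Sigma$, the kernel $K$ of ${\rm Mod}(S)\to{\rm Mod}(\Sigma)$ is canonically isomorphic to the point pushing subgroup $\pi_1(\Sigma,p)$, generated by the point pushing maps $P_\gamma$ along simple based loops $\gamma$ in $\Sigma$. It therefore suffices to show that $P_\gamma\in\Gamma$ for a collection of loops $\gamma$ generating $\pi_1(\Sigma,p)$.

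My starting point is the hypothesis itself: since $c_1,c_2\in\mathcal{C}$ cobound a once punctured annulus $A\subset S$ with puncture $p$, the curves $c_1,c_2$ cobound the corresponding annulus in $\Sigma$, and the standard formula for point pushing gives
\[
T_{c_1}T_{c_2}^{-1}=P_{\gamma_0},
\]
where $\gamma_0$ is a simple loop in $\Sigma$ based at $p$ representing the core of $A$ (with a choice of basing path inside $A$). Thus $P_{\gamma_0}\in\Gamma$. For any $f\in\Gamma$ the conjugate
\[
fP_{\gamma_0}f^{-1}=T_{f(c_1)}T_{f(c_2)}^{-1}=P_{f_*\gamma_0}
\]
also lies in $\Gamma$, so $\Gamma\cap K$ contains the entire $\Gamma$-orbit $\{P_{f_*\gamma_0}:f\in\Gamma\}$. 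It remains to prove that the loops $f_*\gamma_0$ generate $\pi_1(\Sigma,p)$.

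This is where I invoke Lemma \ref{generate}. I choose the basepoint $x\in G=\bigcup_{c\in\mathcal{C}}c$ to lie on the curve $c_1$, and declare $c_0:=c_1$ the root of the curve tree. Then $L_0$ consists of the two loops going once around $c_1$ at $x$, whose homotopy classes in $\pi_1(S,x)$ are (a basepoint transport of) $\gamma_0^{\pm 1}$. Inductively $L_k$ is obtained by applying $T_{c}^{\pm 1}$ for curves $c\in\mathcal{C}$ at distance $k$ from $c_0$ in the curve diagram, which is exactly the operation of producing $f_*\gamma_0$ for successive $f\in\Gamma$. Lemma \ref{generate} then asserts that $L$ generates $\pi_1(S,x)$. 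Since $\pi_1(\Sigma,p)$ is the quotient of $\pi_1(S,p)$ by the normal subgroup generated by the meridian around $p$ (and the meridian itself lies in $K$ already, corresponding to the relation $T_{c_1}T_{c_2}^{-1}$ being a puncture-fixing element), the images of the $L_k$ generate $\pi_1(\Sigma,p)$, and therefore the corresponding point pushing maps generate $K$.

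The step I expect to be most delicate is the identification of the iteratively produced loops in $L$ with the conjugates $f_*\gamma_0$. Concretely, one must check that when a Dehn twist $T_c$ (for $c\in\mathcal{C}$) is applied to a loop that already equals (a basepoint transport of) $f_*\gamma_0$, the resulting loop in $L_{k+1}$ is again (a basepoint transport of) $(T_c f)_*\gamma_0$; this is a basepoint bookkeeping argument but requires that one tracks how the basepoint $x\in c_1$ interacts with the annulus $A$ containing $p$. Once this identification is in place the argument closes as above.
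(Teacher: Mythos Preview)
Your outline is the argument the paper points to (it does not give its own proof but cites Salter's Lemma 9.4 as a consequence of Lemma~\ref{generate}): realize the kernel as point pushing, recognize $T_{c_1}T_{c_2}^{-1}$ as the push along the core $\gamma_0$ of $A$, and use Lemma~\ref{generate} to see that the $\Gamma$-orbit of $\gamma_0$ generates $\pi_1(\Sigma)$.

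The step you flag is the only substantive gap, and it needs more than generic bookkeeping. The loops in $L$ live in $\pi_1(\Sigma,x)$ for $x\in c_1$, while the conjugation action you want is on $\pi_1(\Sigma,p)$; transporting along a path $\alpha$ from $p$ to $x$ introduces, for each word $w$, an inner twist by $\bar\alpha\cdot w(\alpha)\in\pi_1(\Sigma,x)$, and you must know these twists already lie in the subgroup you are building before you can conclude. The clean resolution is a structural fact you have not stated: since $c_1,c_2$ cobound the once-punctured annulus $A$ and curves in $\mathcal C$ meet pairwise in at most one point, any $c\in\mathcal C$ crossing $c_1$ must traverse $A$ and hence also cross $c_2$; because the curve diagram is a tree this forces $c_1$ and $c_2$ to be \emph{leaves} with a unique common neighbour $c_3$. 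Now choose $x\in c_1$ and the path $\alpha\subset A$ to lie in the component of $A\setminus c_3$ containing $p$. Then every $T_c$ with $c\neq c_1$ --- in particular every twist used in constructing $L$ --- fixes $\alpha$ pointwise, the basepoint transport commutes on the nose with these twists, and your identification of $L$ with $\{f_*\gamma_0\}$ goes through with no correction terms. (Minor slip: you write $\pi_1(S,p)$, but $p\notin S$; you mean $\pi_1(S,x)$ together with the change-of-basepoint isomorphism.)
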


For a closed surface $\Sigma_g$ of genus $g\geq 2$ consider the 
system ${\cal S}_g$
of $3g-2$ simple closed curve on $\Sigma_g$ shown in Figure 4. 
\begin{figure}[ht]
\begin{center}
\includegraphics[width=0.5\textwidth]{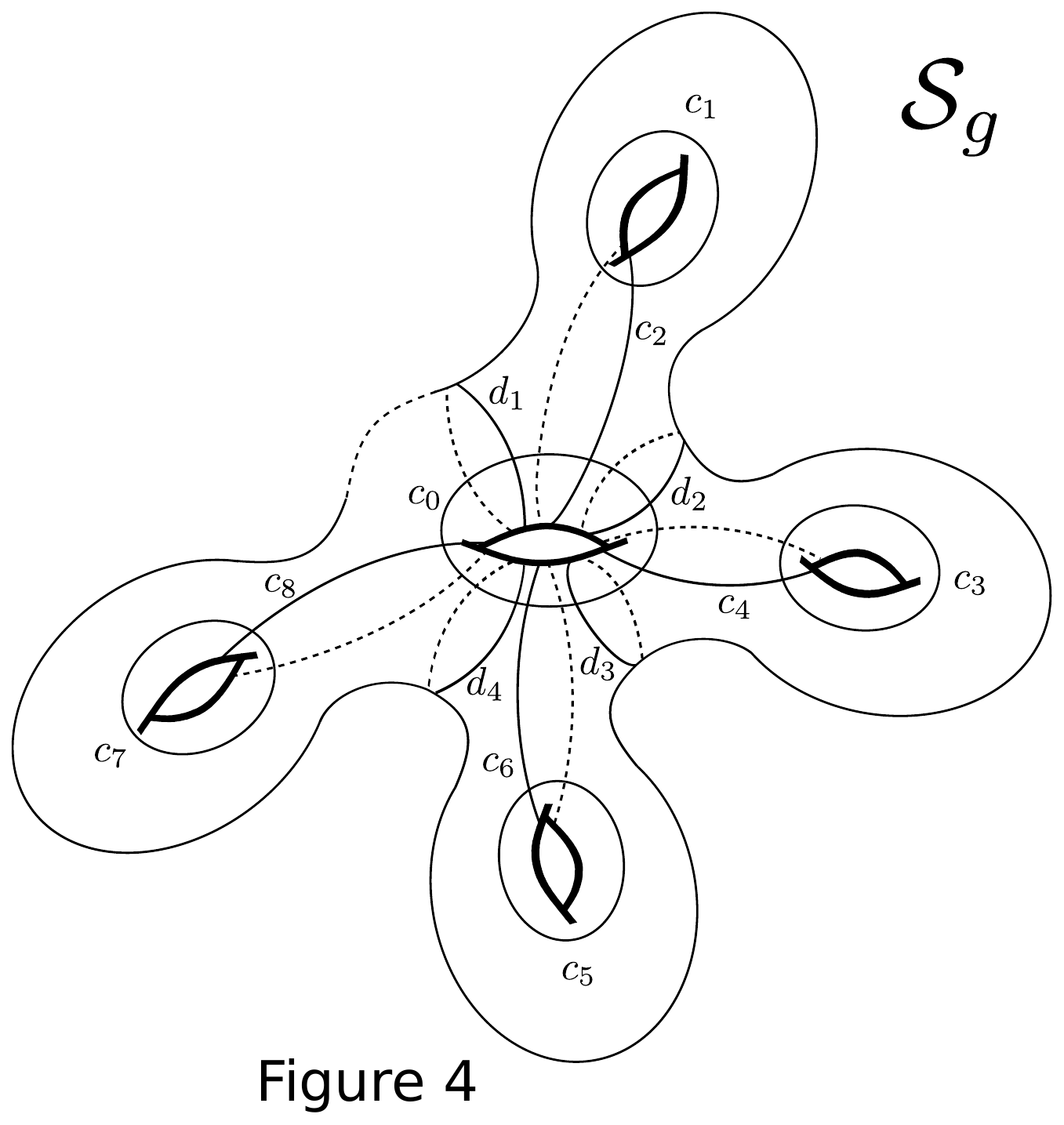}
\end{center}
\end{figure}

The following is well known but hard to locate in the literature.

\begin{lemma}\label{genus2}
For $g=2$, Dehn twists about the
curves from the curve system ${\cal S}_g$ 
generate the stabilizer of an odd spin structure in ${\rm Mod}(\Sigma_2)$.
\end{lemma}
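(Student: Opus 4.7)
The plan is to exploit the hyperelliptic structure of $\Sigma_2$, which reduces the statement to a familiar fact about braid groups. Every element of ${\rm Mod}(\Sigma_2)$ commutes with the unique hyperelliptic involution $\iota$, and Birman--Hilden theory provides a central extension
\[1\to \langle\iota\rangle\to {\rm Mod}(\Sigma_2)\xrightarrow{\pi} {\rm Mod}(S^2,W)\to 1,\]
where $W\subset S^2$ is the six-element branch locus of the hyperelliptic double cover $\Sigma_2\to S^2$. Under the standard dictionary (see e.g.\ \cite{FM12}), the six odd $\mathbb{Z}/2\mathbb{Z}$-spin structures on $\Sigma_2$ are in bijection with the six Weierstrass points; writing $w\in W$ for the point corresponding to $\phi$, the stabilizer ${\rm Mod}(\Sigma_2)[\phi]$ coincides with $\pi^{-1}({\rm Mod}(S^2,W)_w)$, where the subscript denotes the pointwise stabilizer of $w$.

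Next I would argue that the four curves of ${\cal S}_2$ are the preimages of four arcs $a_1,\dots,a_4$ in $S^2$ forming a chain through the five points of $W\smallsetminus\{w\}$, so that each Dehn twist $T_{c_i}$ projects via $\pi$ to the half-twist $\sigma_i$ about $a_i$. Removing a small open disk around $w$ identifies ${\rm Mod}(S^2,W)_w$ with the mapping class group of the five-punctured disk, which is the Artin braid group $B_5$ generated precisely by $\sigma_1,\dots,\sigma_4$. Consequently the subgroup $H=\langle T_{c_1},\dots,T_{c_4}\rangle$ of ${\rm Mod}(\Sigma_2)$ is mapped by $\pi$ surjectively onto ${\rm Mod}(S^2,W)_w$.

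To conclude $H={\rm Mod}(\Sigma_2)[\phi]$ it remains to check that $\iota\in H$. The regular neighborhood of $c_1\cup\ldots\cup c_4$ is a genus-two subsurface bounded by a single simple closed curve $\delta$ which bounds a disk in $\Sigma_2$, whence $T_\delta=1$. The even-chain relation for a chain of $2g=4$ curves yields $(T_{c_1}T_{c_2}T_{c_3}T_{c_4})^{10}=T_\delta=1$, so $(T_{c_1}T_{c_2}T_{c_3}T_{c_4})^{5}$ has order dividing two; a short computation on $H_1(\Sigma_2,\mathbb{Z})$ (or equivalently the observation that its image under $\pi$ is a full twist in $B_5$, which is trivial because it equals a Dehn twist about a curve bounding a disk on the sphere) identifies it with $\iota$. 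Combined with the surjectivity onto ${\rm Mod}(S^2,W)_w$ this gives $H=\pi^{-1}({\rm Mod}(S^2,W)_w)={\rm Mod}(\Sigma_2)[\phi]$.

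The main technical point is the precise matching between the six odd $\mathbb{Z}/2\mathbb{Z}$-spin structures and the six Weierstrass points under the Birman--Hilden correspondence; this is classical but requires some care and would be invoked via a reference rather than reproved here. The remaining steps are routine consequences of the chain relation and the standard presentation of $B_5$, which is presumably why the author remarks that the statement is well known.
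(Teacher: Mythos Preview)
Your argument is correct and follows the same route as the paper: identify the stabilizer of an odd spin structure with the stabilizer of a Weierstrass point via the hyperelliptic involution, then invoke the standard generation of $B_5$ by half-twists. One small slip---removing a disk about $w$ only gives a surjection $B_5\twoheadrightarrow{\rm Mod}(S^2,W)_w$ with kernel $Z(B_5)$, not an isomorphism---does not affect the surjectivity you need, and your explicit production of $\iota$ via the chain relation is a detail the paper leaves implicit.
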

\begin{proof}
${\cal S}_2$ is just a chain
of $4$ curves which are invariant under the hyperelliptic involution. 
The subgroup $\Gamma$ of ${\rm Mod}(\Sigma_2)$ generated by the 
 Dehn twists about these curves fixes one of the Weierstrass points, that is, one of  
the fixed points of the hyperelliptic involution.

Replacing the Weierstrass point by a small disk shows that we may view
$\Gamma$ as a subgroup of the quotient by its center of the 
\emph{symmetric mapping class group}  of 
a surface of genus $2$ with connected boundary, which equals the
braid group in 5 strands (Theorem 9.2  of \cite{FM12}). 
This group is known to be generated by the Dehn twists about the curves from the 
curve system ${\cal S}_2$ (see Section 9.1 of \cite{FM12}) and hence 
$\Gamma\subset {\rm Mod}(\Sigma_2)$ equals the stabilizer of one of the Weierstrass
points. This Weierstrass point defines a $\mathbb{Z}/2\mathbb{Z}$-spin structure on 
$\Sigma_2$ whose stabilizer in ${\rm Mod}(\Sigma_2)$ equals the stabilizer of the 
Weierstrass point.
This shows the lemma.
\end{proof}

 \begin{lemma}\label{preserve}
The Dehn twists about the curves from the system 
${\cal S}_g$ preserve an odd $\mathbb{Z}/2\mathbb{Z}$-spin
 structure on $\Sigma_g$. 
 \end{lemma}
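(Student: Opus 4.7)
The plan is to exhibit an odd $\mathbb{Z}/2\mathbb{Z}$-spin structure $\phi$ on $\Sigma_g$ such that $\phi(c) = 0$ for every $c \in {\cal S}_g$. This suffices, since by the twist-linearity axiom (Definition \ref{spin}), for any simple closed curve $d$,
\begin{equation*}
\phi(T_c(d)) = \phi(d) + \iota(d,c)\phi(c) = \phi(d),
\end{equation*}
so each Dehn twist $T_c$ with $c \in {\cal S}_g$ lies in ${\rm Mod}(\Sigma_g)[\phi]$.

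To construct such a $\phi$, I would invoke Humphries--Johnson (Theorem \ref{cohomology}): a $\mathbb{Z}/2\mathbb{Z}$-spin structure is determined by its values on any geometric symplectic basis. The curve system ${\cal S}_g$, as shown in Figure 4 and confirmed by the $g=2$ case in Lemma \ref{genus2} (a hyperelliptic-symmetric chain of $4$ curves), contains a chain of $2g$ curves. From this chain one extracts a geometric symplectic basis $\{a_1,b_1,\dots,a_g,b_g\}$ by taking consecutive intersecting pairs as $(a_i,b_i)$ and, where necessary, replacing some of the $b_i$ by curves obtained from chain curves via iterated geometric sums $+_\epsilon$ along disjoint arcs, so that the resulting pairs are mutually disjoint. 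Declaring $\phi(c) = 0$ on every chain curve and computing the induced values $\phi(b_i)$ by repeated application of Salter's addition formula $\phi(a +_\epsilon b) = \phi(a) + \phi(b) + 1$ (Lemma \ref{add}) then specifies $\phi$ uniquely.

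The oddness of $\phi$ is verified via the Arf formula (\ref{arf}) applied to this basis. The base case $g=2$ is the model: with ${\cal S}_2 = \{c_1,c_2,c_3,c_4\}$ and $\phi(c_i) = 0$, the symplectic basis $a_1=c_1,\ b_1=c_2,\ a_2=c_4,\ b_2= c_1 +_\epsilon c_3$ gives $\phi(b_2) = 0+0+1 = 1$ by Lemma \ref{add}, and hence
\begin{equation*}
{\rm Arf}(\phi) = (0+1)(0+1) + (0+1)(1+1) \equiv 1 \pmod 2,
\end{equation*}
so $\phi$ is odd. For general $g$, the analogous computation reading the chain structure off Figure 4 should yield ${\rm Arf}(\phi)\equiv 1$. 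The remaining curves of ${\cal S}_g$ (those not belonging to the extracted chain) are checked to satisfy $\phi = 0$ by writing each as an iterated $+_\epsilon$ sum of chain curves and applying Lemma \ref{add}.

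The main obstacle will be the bookkeeping in both the Arf-invariant computation and the verification $\phi(c) = 0$ for the curves of ${\cal S}_g$ not belonging to the chain: each such verification depends on the precise combinatorics of the configuration in Figure 4, and the parity of the iterated sums must come out correctly. The expected hyperelliptic-type symmetry of ${\cal S}_g$, already manifest in the $g=2$ case treated in Lemma \ref{genus2}, should render this a systematic rather than delicate computation, with the $g=2$ template above serving as the model to generalize.
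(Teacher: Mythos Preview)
Your strategy---define $\phi$ by declaring $\phi(c)=0$ on every curve of ${\cal S}_g$, then invoke twist-linearity and compute the Arf invariant---is sound in principle and genuinely different from the paper's argument. The paper instead exploits a cyclic symmetry: there is a free $\mathbb{Z}/(g-1)\mathbb{Z}$-action on $\Sigma_g$ by rotations about the central curve $c_0$ which preserves ${\cal S}_g$ and has quotient $\Sigma_2$; the system ${\cal S}_g$ descends to the chain ${\cal S}_2$, and the odd spin structure on $\Sigma_2$ established in Lemma~\ref{genus2} lifts back to $\Sigma_g$. This reduces everything to the $g=2$ case in one stroke and avoids any bookkeeping in higher genus.

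Your route, by contrast, requires knowing the combinatorics of Figure~4 precisely. Two points deserve caution. First, your assertion that ${\cal S}_g$ ``contains a chain of $2g$ curves'' is a guess from the $g=2$ case; the actual curve diagram of ${\cal S}_g$ is a tree with the central vertex $c_0$ of valence $g-1$ (one arm per orbit of the cyclic action), not a single chain, so extracting a symplectic basis is slightly more involved than you indicate. Second, the consistency check---that the unique $\phi$ determined by the basis really vanishes on the remaining $g-2$ curves of ${\cal S}_g$---and the Arf computation both depend on this structure and are not as routine as your $g=2$ template suggests. None of this is fatal, but the paper's symmetry argument sidesteps all of it, which is exactly what the cyclic covering buys.
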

 \begin{proof} There exists a cyclic subgroup $G$ of 
 the diffeomorphism group of $\Sigma_g$ 
 of order $g-1$ 
 which preserves ${\cal S}_g$ and acts freely on $\Sigma_g$ as a 
 group of rotations 
 about the center curve $c_0$. The group $G$  
 cyclically permutes the complementary
 components of ${\cal S}_g$. 
 
 As a consequence, the curve system ${\cal S}_g$ descends
 to a curve system on a closed 
 surface $\Sigma_2$ of genus $2$. The curve
 diagram of this system is just a line segment of length 4 and hence
 the Dehn twists about these curves preserve an odd spin structure
 on $\Sigma_2$ (see Lemma \ref{genus2}). 
 This spin structure lifts to a spin structure
 on $\Sigma_g$ which is invariant under the Dehn twist about the curves
 from ${\cal S}_g$. The parity of this spin structure is odd,
 as can also easily
 be checked explicitly using the formula (\ref{arf}).  
 This is what we wanted to show.
\end{proof}

We use Lemma \ref{generatepointpush} and 
Proposition \ref{spinmap}  to show

\begin{proposition}\label{cross}
Let $\phi$ be an odd $\mathbb{Z}/2\mathbb{Z}$-spin structure on a surface 
$\Sigma_g$ of genus $g\geq 2$. Then 
the group ${\rm Mod}(\Sigma_g)[\phi]$ is 
generated by the Dehn twists about the curves
from the curve system ${\cal S}_g$.
\end{proposition}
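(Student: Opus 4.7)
The plan is to argue by induction on the genus $g$, with base case $g=2$ provided by Lemma \ref{genus2}. Let $g\geq 3$, assume the statement for $g-1$, and let $\Gamma\subset {\rm Mod}(\Sigma_g)[\phi]$ denote the subgroup generated by the Dehn twists about the curves in ${\cal S}_g$. By Lemma \ref{preserve} we have $\Gamma\subset {\rm Mod}(\Sigma_g)[\phi]$, so only the reverse inclusion is at issue. The main tool is Proposition \ref{spinmap}: it suffices to exhibit a pair of disjoint curves $c,d\in {\cal S}_g$ with $\phi(c)=\phi(d)=\pm 1$ and $\Sigma_g-(c\cup d)$ connected, show that $\Gamma$ contains the stabilizer in ${\rm Mod}(\Sigma_g)[\phi]$ of both $c$ and $d$, and show that $\Gamma$ contains an element swapping $c$ and $d$; then $\Gamma$ surjects onto ${\rm Mod}(\Sigma_g)[\phi]$ under $\rho$.

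For the first and main step, I would exploit the $\mathbb{Z}/(g-1)$-rotational symmetry of ${\cal S}_g$ which was used in Lemma \ref{preserve}. Choose $c\in {\cal S}_g$ to be a curve on the "outside" of one of the symmetric arms; after cutting $\Sigma_g$ along $c$ and capping off, the induced curve system on $\Sigma_{g-1}$ is isotopic to ${\cal S}_{g-1}$, and by Proposition \ref{parity} the induced spin structure $\phi_c$ on $\Sigma_{g-1}$ has the same (odd) parity as $\phi$. By the inductive hypothesis, the Dehn twists about the projected curves generate ${\rm Mod}(\Sigma_{g-1})[\phi_c]$, and these Dehn twists all lift to elements of $\Gamma$. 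Combined with the surjection of Proposition \ref{parity}, this gives every element of $\Gamma_c/\mathbb{Z}$ modulo the point-pushing kernel. The Dehn twist $T_c$ itself lies in $\Gamma$ by definition, so the remaining piece is exactly the disk-pushing subgroup associated to the two marked points on $\Sigma_{g-1}$. Here I would apply Lemma \ref{generatepointpush}: the admissibility of ${\cal S}_g$ together with its symmetric arm structure guarantees a pair of curves in ${\cal S}_g\setminus\{c\}$ that cobound an annulus once punctured by each of the two marked points, which is precisely the hypothesis needed to conclude that the kernel lies in $\Gamma$. This shows the full stabilizer $\Gamma_c$ lies in $\Gamma$, and by the same argument applied to a second curve $d\in {\cal S}_g$ disjoint from $c$ (chosen from the same or an adjacent arm so that $\Sigma_g-(c\cup d)$ is connected and $\phi(d)=\pm 1$), the stabilizer $\Gamma_d$ lies in $\Gamma$ as well.

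For the swap element, I would find a short sequence of curves in ${\cal S}_g$ realising a homeomorphism exchanging $c$ and $d$: if $c,d$ are disjoint and a third curve $e\in {\cal S}_g$ meets each of $c,d$ transversely in one point and is disjoint from all other members needed, then a standard braid-type product of $T_c,T_d,T_e$ realises a half-twist swapping $c$ and $d$. In ${\cal S}_g$ such a configuration is visible within any single arm of three curves adjacent to the central curve $c_0$. Combining the three inclusions with Proposition \ref{spinmap} yields $\Gamma={\rm Mod}(\Sigma_g)[\phi]$.

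The hard part will be the first step, specifically verifying simultaneously three compatibility conditions between ${\cal S}_g$ and the cutting-and-capping construction: that the chosen curve $c$ has $\phi(c)=\pm 1$ (this will follow from the admissibility and the explicit description of $\phi$ as the lift under the cyclic cover from $\Sigma_2$), that the residual curves project to a system isotopic to ${\cal S}_{g-1}$ carrying an odd spin structure (requiring care about which arm is removed), and that two members of ${\cal S}_g$ cobound the once-punctured annulus required to invoke Lemma \ref{generatepointpush} for \emph{each} of the two capped points. Once these compatibilities are pinned down the rest of the argument is essentially formal.
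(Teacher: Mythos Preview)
Your proposal has a fundamental gap at the very first step. You want to choose $c,d\in{\cal S}_g$ with $\phi(c)=\phi(d)=\pm 1$, but no such curves exist: since the Dehn twist $T_c$ preserves the $\mathbb{Z}/2\mathbb{Z}$-spin structure $\phi$ if and only if $\phi(c)=0$ (by twist linearity), Lemma~\ref{preserve} forces $\phi(c)=0$ for every $c\in{\cal S}_g$. Proposition~\ref{spinmap}, however, is built on the graph ${\cal C\cal G}_1^+$ and genuinely requires the curve you cut along to have $\phi$-value $\pm 1$; the parity bookkeeping in Proposition~\ref{parity} and the point-pushing analysis of Proposition~\ref{surject} both depend on this. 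So the scheme ``cut along a curve of ${\cal S}_g$, observe $T_c\in\Gamma$ for free, and reduce to ${\cal S}_{g-1}$'' cannot be executed.

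The paper's proof cuts instead along an auxiliary curve $a_1\notin{\cal S}_g$ with $\phi(a_1)=1$, namely the curve meeting $c_1$ once and disjoint from the rest of ${\cal S}_g$. This choice makes the reduction to ${\cal S}_{g-1}$ work and makes Lemma~\ref{generatepointpush} applicable to each of the two punctures, but it creates two new obligations your outline does not anticipate: one must show $T_{a_1}^2\in\Gamma$ (since $T_{a_1}$ itself is not in ${\rm Mod}(\Sigma_g)[\phi]$), and one must produce inside $\Gamma$ an element exchanging the two sides of $a_1$ as well as an involution swapping $a_1$ with a disjoint curve $a_2$ of the same type. The paper handles these with explicit words coming from the hyperelliptic involution on a genus-two subsurface and from the Garside element of the $E_6$ Artin group realized by a subconfiguration of ${\cal S}_g$. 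Your braid-type swap argument, by contrast, would only exchange curves of $\phi$-value $0$ and does not address the $a_i$.
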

\begin{proof} 
Lemma \ref{preserve} shows that the 
subgroup $\Gamma$ of ${\rm Mod}(\Sigma_g)$ generated by the Dehn twists about the
curves from the curve system ${\cal S}_g$ 
is a subgroup of  
${\rm Mod}(\Sigma_g)[\phi]$.  
We have to show that it coincides
with ${\rm Mod}(\Sigma_g)[\phi]$.

We proceed by induction on the genus. 
By Lemma \ref{genus2}, the claim of the proposition holds true for $g=2$.
Thus let us assume that the proposition is known for some
$g-1\geq 2$. Consider the curve system ${\cal S}_g$ on a surface of genus
$g$. 
Using the labeling from Figure 4, let $a_1$ be the simple closed
curve on $\Sigma_g$ which intersects the curve $c_1$ in a single point
and is disjoint from any other curve from ${\cal S}_g$. We know that
$\phi(a_1)=1$. 
We aim at showing that
$\Gamma\cap {\rm Stab}(a_1)={\rm Mod}(\Sigma_g)[\phi]\cap
{\rm Stab}(a_1)$. 

To this end cut $\Sigma_g$ open along $a_1$. The resulting surface
is a surface $\Sigma_{g-1}^2$ of genus $g-1$ with two boundary
components. Replace these two boundary components by punctures and let
$\Sigma_{g-1,2}$ be the resulting twice punctured surface.
As before, the spin structure $\phi$ decends to a spin structure,
again denoted by $\phi$, on the surface $\Sigma_{g-1}$ obtained
by closing the punctures, and to a spin structure 
on $\Sigma_{g-1,2}$.
The curve system ${\cal S}_g$ descends to the curve system
${\cal S}_{g-1}$ on $\Sigma_{g-1}$.

By induction hypothesis, the Dehn twists about the curves from
the curve system ${\cal S}_{g-1}$ generate the spin mapping class
group ${\rm Mod}(\Sigma_{g-1})[\phi]$. On the other hand,
we can apply Lemma \ref{generatepointpush} to each of the two
punctures of $\Sigma_{g-1,2}$ as each of these two punctures
is contained in a once punctured annulus bounded by two
curves from the restriction of ${\cal S}_g$ to
$\Sigma_{g-1,2}$. We conclude that the point pushing
maps about these punctures are contained in
the group $\Gamma\cap {\rm Stab}(a_1)$. As a consequence,
the group $\Gamma\cap {\rm Stab}(a_1)$ surjects onto
the index two subgroup of ${\rm Mod}(\Sigma_{g-1,2})$
which fixes each of the two punctures.
 
We have to show that there also is an element of
$\Gamma\cap {\rm Stab}(a_1)$ which
exchanges the two boundary components of $\Sigma_g-a_1$. 
For this it suffices to find an 
element of $\Gamma$ which fixes 
the curves $c_1,c_2$ and exchanges $d_1,d_2$.

If $g=3$ then consider the 
hyperelliptic involution of the surface 
$\Sigma_2$ obtained by cutting $\Sigma_3$ open along the
simple closed curve $a_1$ and removing the punctures.
This element can be represented as an explicit word in the 
Dehn twists about the curve
$c_2,c_0,c_4,c_3$ (or, rather, their projection to $\Sigma_2$). 
The mapping class  $\psi$, viewed as an element of the
mapping class group of $\Sigma_3$, 
 preserves the curves $c_i$ and
exchanges $d_1$ and $d_2$.

For $g\geq 4$ the same argument can be used. 
Namely, the element $\psi$ still acts as an involution
on $\Sigma_g$ which preserves the curves $c_1,c_2$ and
exchanges $d_1$ and $d_2$. However this involution does
not preserve the curve system ${\cal S}_g$.

To summarize, we showed so far that
$\Gamma$ surjects onto ${\rm Stab}(a_1)[\phi]/\mathbb{Z}$. 
Thus to show that
$\Gamma\cap {\rm Stab}(a_1)={\rm Mod}(\Sigma_g)[\phi]\cap
{\rm Stab}(a_1)$ it suffices to show that
$\Gamma$ contains the square $T_{a_1}^2$ of the Dehn twist about $a_1$.
For an application of
Proposition \ref{spinmap}, we have to show furthermore that
$\Gamma$ contains an involution 
$\Psi$ which exchanges 
the curve $a_1$ with a curve disjoint from $a_1$.
We show first that $\Gamma$ contains an involution which
maps $a_1$ to $a_2$.

To this end consider again first the case $g=3$. 
The curve system ${\cal S}_3$ contains a curves system
${\cal E}_6
\subset {\cal S}_3$ obtained from  
${\cal S}_3$ by deleting the curve $d_2$. 
This is the curve system shown in Figure 2 in the introduction. 
By Theorem 1.4 of \cite{Ma00}, there exists 
an explicit word $c(E_6)$ 
in the Dehn twists about the curves from 
the system ${\cal E}_6$, the image of the so-called
\emph{Garside element} of the Artin group of type $E_6$, 
which acts 
as a reflection 
on the curve diagram of ${\cal E}_6$ exchanging the curves 
$c_1$ and $c_3$.
Then this reflection exchanges $a_1$ and $a_2$
and hence it has the desired properties. 

As before, this reasoning extends to any $g\geq 4$. 
Namely, the element $c(E_6)$, viewed as an element of the mapping
class group of $\Sigma_g$, still acts as an involution on
$\Sigma_g$ which exchanges $a_1$ and $a_2$ and preserves the
subsurface of $\Sigma_g$ filled by the curves $c_1,c_2,c_0,c_4,c_3,d_2$.

For an application of Proposition \ref{spinmap}, we are left with
showing that 
the square of the Dehn twist about $a_1$ is contained in 
$\Gamma$. By the above discussion, we know that
$\Gamma\cap {\rm Stab}(a_1)$ surjects onto
${\rm Mod}(\Sigma_{g-1,2})[\phi]$. In particular,
$\Gamma$ contains $T_{a_2}^2$, viewed as an element of
${\rm Stab}(a_1)\subset {\rm Mod}(\Sigma_g)$. Since
$a_1$ is the image of $a_2$ under an involution contained in
$\Gamma$, it follows that $T_{a_1}^2\in \Gamma$.

To summarize, we showed that $\Gamma\cap {\rm Stab}(a_1)=
{\rm Mod}(\Sigma_g)[\phi]\cap {\rm Stab}(a_1)$, furthermore 
$\Gamma$ contains an involution $\Psi$ which exchanges $a_1$ and $a_2$.
Proposition \ref{spinmap} now shows that $\Gamma={\rm Mod}(\Sigma_g)[\phi]$. 
This completes the proof of the Proposition.
\end{proof}


We use Proposition \ref{cross} as the base case for the proof of
Theorem \ref{main2} from the introduction. The curve system 
${\cal C}_g$ is shown in Figure 1 in the introduction.
Note that 
we have ${\cal C}_3={\cal S}_3$. 

\begin{theorem}\label{generatebycurves}
Let $\phi$ be an odd $\mathbb{Z}/2\mathbb{Z}$-spin structure 
on a surface $\Sigma_g$ of genus $g\geq 3$. Then the group 
${\rm Mod}(\Sigma_g)[\phi]$ 
is generated 
by the Dehn twists about the curves from the 
curve system ${\cal C}_g$.
\end{theorem}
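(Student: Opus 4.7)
The plan is to argue by induction on the genus $g\geq 3$, closely following the structure of the proof of Proposition \ref{cross}. The base case $g=3$ is immediate because by inspection of Figures 1 and 4 we have ${\cal C}_3={\cal S}_3$, so Proposition \ref{cross} applies. For the inductive step, assume the theorem holds for all genera $h$ with $3\leq h<g$, and let $\Gamma\subset {\rm Mod}(\Sigma_g)$ be the subgroup generated by the Dehn twists about the curves from ${\cal C}_g$. A direct check using the formula (\ref{arf}) (and the fact that the curve diagram of ${\cal C}_g$ is a tree of the same combinatorial shape as ${\cal S}_g$ up to the single differing curve) shows that $\Gamma\subset {\rm Mod}(\Sigma_g)[\phi]$, so it remains to prove the reverse inclusion.

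First I would single out a distinguished nonseparating curve $a_1\in {\cal C}_g$ with $\phi(a_1)=1$ which is intersected by only one other curve of the system, and a second curve $a_2\in{\cal C}_g$, disjoint from $a_1$, with $\phi(a_2)=1$ such that $\Sigma_g-(a_1\cup a_2)$ is connected. The plan is then to verify the two hypotheses needed to feed Proposition \ref{spinmap}: (i) the stabilizer intersection $\Gamma\cap {\rm Stab}(a_1)$ exhausts ${\rm Mod}(\Sigma_g)[\phi]\cap {\rm Stab}(a_1)$, and (ii) $\Gamma$ contains an involution exchanging $a_1$ with a disjoint curve of spin value $\pm 1$. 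Once both hold, Proposition \ref{spinmap} forces $\Gamma={\rm Mod}(\Sigma_g)[\phi]$, just as at the end of the proof of Proposition \ref{cross}.

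To establish (i), cut $\Sigma_g$ open along $a_1$ and cap off the two resulting boundary components to obtain $\Sigma_{g-1}$. By Proposition \ref{parity}, $\phi$ descends to an odd $\mathbb{Z}/2\mathbb{Z}$-spin structure $\phi_{a_1}$ on $\Sigma_{g-1}$, and ${\rm Stab}(a_1)[\phi]/\mathbb{Z}$ is the preimage in ${\rm Stab}(a_1)/\mathbb{Z}$ of ${\rm Mod}(\Sigma_{g-1})[\phi_{a_1}]$. By construction the curves of ${\cal C}_g$ other than $a_1$, viewed in $\Sigma_{g-1,2}$ and then projected to $\Sigma_{g-1}$, yield precisely the curve system ${\cal C}_{g-1}$, with two curves of ${\cal C}_g$ cobounding a once-punctured annulus around each puncture of $\Sigma_{g-1,2}$. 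The induction hypothesis gives the surjection onto ${\rm Mod}(\Sigma_{g-1})[\phi_{a_1}]$, Lemma \ref{generatepointpush} (applied at each of the two punctures) captures both point-pushing subgroups, and a Birman-style argument chains these together to show that $\Gamma\cap {\rm Stab}(a_1)$ surjects onto ${\rm Stab}(a_1)[\phi]/\mathbb{Z}$. It then remains to produce the square $T_{a_1}^{2}$ inside $\Gamma$ and an element exchanging the two boundary components of $\Sigma_g-a_1$; for the boundary swap I would use the same quasi-hyperelliptic word in Dehn twists about a short chain of curves in ${\cal C}_g$ as in the proof of Proposition \ref{cross}, and for $T_{a_1}^{2}$ I would obtain $T_{a_2}^{2}\in \Gamma\cap {\rm Stab}(a_1)$ from the surjection onto ${\rm Mod}(\Sigma_{g-1,2})[\phi]$ and then conjugate by the involution from (ii).

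For (ii), I would reuse the Garside-element argument from Proposition \ref{cross}: the sub-system ${\cal E}_6\subset {\cal C}_g$ (the $E_6$-shaped subconfiguration near $a_1,a_2$) supports an explicit word $c(E_6)$ in the Dehn twists about its curves which acts as an involution of $\Sigma_g$ swapping $a_1$ and $a_2$, stabilizing the filled $E_6$ subsurface and fixing everything outside it. Because ${\cal E}_6\subset {\cal C}_g$ for every $g\geq 3$, this element lies in $\Gamma$ regardless of $g$. The main obstacle I anticipate is the bookkeeping in step (i): verifying that after cutting along $a_1$, capping, and projecting, the curves of ${\cal C}_g\setminus\{a_1\}$ literally produce ${\cal C}_{g-1}$ (so that the induction hypothesis applies verbatim) and that the point-pushing subgroups together with the descended mapping class group and the boundary-swap involution genuinely combine to give all of ${\rm Stab}(a_1)[\phi]$, rather than only an index-two subgroup. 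Once this is carefully checked, the application of Proposition \ref{spinmap} is a formality and completes the induction.
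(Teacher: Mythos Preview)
Your plan diverges from the paper's argument in a substantive way, and the divergence hides a real gap. The paper does \emph{not} rerun the genus induction for ${\cal C}_g$. Instead it observes that ${\cal C}_g$ is obtained from ${\cal S}_g$ by deleting $d_3,\dots,d_{g-1}$, and then shows directly that each $T_{d_i}$ $(3\le i\le g-1)$ already lies in the group $\Gamma$ generated by the ${\cal C}_g$--twists. The mechanism is: first get $T_{a_1}^2\in\Gamma$ from a genus-$2$ subsurface argument, transport this to $T_{a_2}^2\in\Gamma$ via the $E_6$ Garside involution, and then use the chain relation $(T_{a_2}^2T_{c_3}T_{c_4})^3=T_{d_2}T_{d_3}$ to extract $T_{d_3}$; iterating along the chain produces the remaining $T_{d_i}$. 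Once all $T_{d_i}\in\Gamma$, Proposition \ref{cross} finishes the job.

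Your inductive scheme breaks at the sentence ``the curves of ${\cal C}_g$ other than $a_1$ \dots\ yield precisely the curve system ${\cal C}_{g-1}$, with two curves of ${\cal C}_g$ cobounding a once-punctured annulus around each puncture.'' First, $a_1$ (and $a_2$) are auxiliary curves, not members of ${\cal C}_g$. More seriously, the reduction ${\cal S}_g\rightsquigarrow{\cal S}_{g-1}$ in Proposition \ref{cross} exploits the full cyclic symmetry of ${\cal S}_g$: the two once-punctured annuli in $\Sigma_{g-1,2}$ are bounded by the two $d$--curves adjacent to the excised sector, namely $d_1$ and $d_{g-1}$. For $g\ge 4$ the curve $d_{g-1}$ has been deleted from ${\cal C}_g$, so Lemma \ref{generatepointpush} is unavailable at one of the two punctures and you cannot capture the full point-pushing subgroup. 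Cutting instead near the $d_1,d_2$ end does not help: then $d_1,d_2$ become redundant (parallel to $c$--curves) and the descended system has \emph{no} $d$--curves at all, hence is not ${\cal C}_{g-1}$. There is no choice of cutting curve for which ${\cal C}_g$ descends to ${\cal C}_{g-1}$ while retaining once-punctured annuli at both punctures; this asymmetry is exactly why the paper passes through ${\cal S}_g$ and the chain relation rather than inducting directly.
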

\begin{proof}
The curve system ${\cal C}_g$ is obtained
from the curve system ${\cal S}_g$ by deleting the 
curves $d_3,\dots,d_{g-1}$. 
Let $\Gamma$ be the subgroup of ${\rm Mod}(\Sigma_g)[\phi]$
generated by the Dehn twists about the curves from the
curve system ${\cal C}_g$. By Proposition \ref{cross}, it suffices to show
that the Dehn twists $T_{d_i}$ for $i=3,\dots,g-1$ are contained in 
$\Gamma$. Moreover, as  
${\cal D}_3={\cal C}_3$,
we may assume
that $g\geq 4$.

Let $a_i$ be the simple closed curve which 
intersects $c_{2i-1}$ in a single point and does not intersect
any other curve from ${\cal S}_g$. 
We first claim that $T_{a_1}^2\in \Gamma$. 

To show the claim consider the subsurface
$\Sigma_2^1$ of $\Sigma_g$ which is filled by 
the curves $a_1,c_0,c_1,c_2,d_1,d_2$. This is a surface of genus
$2$ with connected boundary. 
The curves $d_1,d_2$ bound a one-holed annulus containing
the boundary circle $C$ of $\Sigma_2^1$.

By homological coherence (Proposition 3.8 of \cite{Sa19}),
we have $\phi(C)=1$. Thus the
spin structure $\phi$ descends to a spin structure on
$\Sigma_2^1$, on the surface $\Sigma_{2,1}$ obtained from 
$\Sigma_2^1$ by
replacing the boundary component by a puncture 
and on the surface
$\Sigma_2$ obtained from $\Sigma_{2,1}$ by forgetting the
puncture, again denoted by $\phi$. The curves 
of the curve  system ${\cal C}_g$ which are 
contained in $\Sigma_2^1$ 
define a curve system ${\cal F}$ on $\Sigma_2^1$ which 
descends to a curve system on $\Sigma_2$.
The curve diagram of this system is just a line segment of length 4.
By Lemma \ref{genus2}, the Dehn twists about the curves from 
${\cal F}$ project onto ${\rm Mod}(\Sigma_2)[\phi]$. 

On the other hand, ${\cal F}$ also contains two simple closed
curves which enclose the boundary component of $\Sigma_{2,1}$.
It now follows from Lemma \ref{generatepointpush} that
the subgroup of ${\rm Mod}(\Sigma_{2,1})$ generated by the 
Dehn twists about the curves from ${\cal F}$ equals
${\rm Mod}(\Sigma_{2,1})[\phi]$. In particular, this group contains 
$T_{a_1}^2$ and therefore 
$T_{a_1}^2\in \Gamma$.

We claim next that $T_{a_2}^2\in \Gamma$. To this end 
consider the subsurface 
$\Sigma_{3}^1$ of $\Sigma_g$ which is
filled by the system of 
curves ${\cal G}=\{c_1,c_2,c_0,c_4,c_3,d_1,d_2,d_3\}$. This is
a surface of genus $3$ with connected boundary.
The curves $d_1,d_3$ bound a one-holed annulus containing
the boundary circle $A$ of $\Sigma_3^1$.

The subsurface $\Sigma_3^1$ of $\Sigma_g$ 
contains the curves $c_1,c_2,c_0,c_4,c_3,d_2$ whose
curve diagram is the Dynkin diagram of type $E_6$
(see Figure 2 in the introduction).
There is an involution of $\Sigma_3^1$ 
which fixes the curves $c_0,d_2$ and 
exchanges $c_2,c_4$ and $a_1,a_2$. By Theorem 1.4 of 
\cite{Ma00}, this involution is contained in the subgroup of 
the mapping class group of $\Sigma_3^1$ which is generated by the
Dehn twists about the curves $c_1,c_2,c_0,c_4,c_5,d_2$. 
As a consequence, there is an element of $\Gamma$ which 
exchanges $a_1$ and $a_2$. This implies that
$T_{a_2}^2\in \Gamma$.

By the chain relation for Dehn twists of surfaces 
(see p.108 of \cite{FM12}), we have
$(T_{a_2}^2T_{c_3}T_{c_4})^3=T_{d_2}T_{d_3}$. Since
$T_{d_2}\in \Gamma$, we conclude that 
$T_{d_3}\in \Gamma$. 

Now repeat this argument, replacing the curves $c_j$ by
$c_{j+2}$ and the curve $a_i$ by $a_{i+1}$ 
where the first step discussed above is the case $i=1$.  
In finitely
many such steps we find that indeed 
$T_{d_i}\in \Gamma$ for all $i$. This is what we wanted to show.
\end{proof}

\section{Structure of the spin mapping class group of even parity}\label{structureeven}

The goal of this section is to prove the second part of Theorem \ref{main2}.
Our strategy is to reduce this result to the first part of Theorem \ref{main2} by
 a change of parity construction. 

Consider for the moment an
arbitrary
$\mathbb{Z}/r\mathbb{Z}$-spin structures $\phi$ on
a compact surface $S$ of genus $g\geq 4$.
%
In the appendix we introduce a graph ${\cal C\cal G}_2^+$ 
whose vertices are ordered pairs $(a,b)$ 
of nonseparating simple closed curves which intersect in a single
point and hence they fill a one-holed torus $T(a,b)$. Furthermore, it is 
required that $\phi(a)=2$ and $\phi(b)=0$. 
The spin structure on $S$ restricts to a spin structure $\hat \phi$
on $\Sigma(a,b)=S-T(a,b)$. 

By homological coherence (Proposition 3.5 of \cite{Sa19}), 
if we orient the boundary circle $c$ of $\Sigma(a,b)$ as the oriented
boundary of $\Sigma(a,b)$ then we have $\phi(c)=1$. Thus 
if $r=2$ then  
$\phi$ descends to a spin structure $\hat \phi$ 
on the surface $\Sigma$ obtained
from $\Sigma(a,b)$ by capping off the boundary. This spin structure
$\hat \phi$ 
has a parity, either even or odd.

\begin{lemma}\label{nextcase}
A $\mathbb{Z}/2\mathbb{Z}$-spin structure $\phi$ 
on $S$ induces a 
$\mathbb{Z}/2\mathbb{Z}$-spin structure $\hat \phi$ on 
the surface $\Sigma$ whose parity is opposite to the parity of $\phi$.
\end{lemma}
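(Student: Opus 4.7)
The plan is to derive the parity change directly from the Arf invariant formula (\ref{arf}) by building a geometric symplectic basis for $S$ that restricts nicely to one for $\Sigma$, so that everything reduces to the contribution of the single pair $(a,b)$.

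First I would record the well-definedness of $\hat\phi$, which is really already given in the paragraph preceding the lemma: by homological coherence the boundary circle $c$ of $\Sigma(a,b)$, oriented as the boundary of $\Sigma(a,b)$, satisfies $\phi(c)=1$, which is precisely the value a $\mathbb{Z}/2\mathbb{Z}$-spin structure must take on the boundary of an embedded disk. Hence $\phi$ restricted to $\Sigma(a,b)$ extends across the capping disk to a $\mathbb{Z}/2\mathbb{Z}$-spin structure $\hat\phi$ on $\Sigma$, and this $\hat\phi$ agrees with $\phi$ on every simple closed curve in $\Sigma(a,b)$ (their Johnson lifts lie away from the capping disk).

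Next I would produce the appropriate basis. Since $T(a,b)$ is a one-holed torus filled by $a,b$, the pair $(a,b)$ is the first symplectic pair of a geometric symplectic basis of $S$. Choose the remaining pairs $(a_i,b_i)$ for $i=2,\dots,g$ to lie in $\Sigma(a,b)$ and to be disjoint from a tubular neighborhood of the boundary circle $c$. This is possible because $\Sigma(a,b)$ is a compact surface of genus $g-1$ with one boundary component, and any symplectic basis for the capped-off surface $\Sigma$ can be realized by curves in the complement of the capping disk. Pushing these $(a_i,b_i)$ into $\Sigma$ via the inclusion $\Sigma(a,b)\hookrightarrow\Sigma$ yields a geometric symplectic basis for $\Sigma$ on which $\hat\phi(a_i)=\phi(a_i)$ and $\hat\phi(b_i)=\phi(b_i)$ for all $i\geq 2$.

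Finally I would just apply the formula (\ref{arf}). Since $\phi(a)\equiv 2\equiv 0$ and $\phi(b)=0$ modulo $2$, the $(a,b)$-contribution equals $(0+1)(0+1)=1$, so
\[
{\rm Arf}(\phi)=1+\sum_{i=2}^{g}(\phi(a_i)+1)(\phi(b_i)+1)=1+{\rm Arf}(\hat\phi)\in\mathbb{Z}/2\mathbb{Z},
\]
proving that the parities are opposite. There is no real obstacle; the only point requiring care is the construction of a geometric symplectic basis of $S$ whose last $g-1$ pairs sit entirely inside $\Sigma(a,b)$ away from $c$, but this is routine surface topology.
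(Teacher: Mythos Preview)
Your proof is correct and follows essentially the same approach as the paper: build a geometric symplectic basis for $S$ consisting of $(a,b)$ together with $g-1$ pairs lying in $\Sigma(a,b)$, observe that $\hat\phi$ agrees with $\phi$ on the latter, and read off the parity flip from the Arf formula since the $(a,b)$-term contributes $(0+1)(0+1)=1$. The paper's version is slightly terser (it starts from a basis of $\Sigma$ and extends by $a,b$ rather than starting from $(a,b)$ and completing), but the argument is the same.
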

\begin{proof}
Choose a geometric symplectic basis
  $a_1,b_1,\dots, a_{g-1},b_{g-1}$ for 
  $\Sigma$. This basis then lifts to a curve system on the
  surface $\Sigma(a,b)=S-T(a,b)$. 
  Using the inclusion $\Sigma(a,b)\to S$, this basis can be extended
  to a geometric symplectic basis of $S$ by adding
  $a,b$. As $\phi(a)=\phi(b)=0$, the parity of 
  $\phi$ is opposite to the parity of $\hat \phi$.
\end{proof}

The next observation is an analog of Proposition \ref{connect}.
Note that we only require $g\geq 3$ here.

\begin{proposition}\label{connect2}
Let $\phi$ be a $\mathbb{Z}/r\mathbb{Z}$-spin structure on a compact surface 
$S$ of genus $g\geq 3$ with empty or connected boundary. 
Then for any two vertices 
$c,d$ of the graph ${\cal C\cal G}_2^+$
there exists a mapping class 
$\zeta\in {\rm Mod}(S)[\phi]$ with $\zeta(c)=d$.
In particular, the action of
  ${\rm Mod}(S)[\phi]$ is transitive on the vertices of the graph
 ${\cal C\cal G}_2^+$.
\end{proposition}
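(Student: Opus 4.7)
The plan is to adapt the argument of Proposition \ref{connect} essentially verbatim, the only change being that the first pair in the geometric symplectic basis is now forced to be the vertex $(a,b)$. The underlying reduction is the same: by the Humphries--Johnson rigidity theorem (Theorem 3.9 of \cite{Sa19}), two $\mathbb{Z}/r\mathbb{Z}$-spin structures that agree on a geometric symplectic basis are equal, so any diffeomorphism carrying a geometric symplectic basis ${\cal B}_1$ to a geometric symplectic basis ${\cal B}_2$ lies in ${\rm Mod}(S)[\phi]$ provided $\phi$ takes identical values on corresponding curves of ${\cal B}_1$ and ${\cal B}_2$.

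I would therefore aim to associate to each vertex $(a,b)$ of ${\cal C\cal G}_2^+$ a geometric symplectic basis
\[{\cal B}(a,b) = \{\alpha_1 = a,\; \beta_1 = b,\; \alpha_2, \beta_2, \ldots, \alpha_g, \beta_g\}\]
whose $\phi$-values depend only on $\phi$, not on the particular vertex. First I would extend $(a,b)$ to an arbitrary geometric symplectic basis; this is possible because $(a,b)$ already forms a dual pair in the symplectic $\mathbb{Z}$-module $H_1(S,\mathbb{Z})$, and any such partial basis extends. With the pair $(\alpha_1,\beta_1) = (a,b)$ now frozen, I would run on the remaining pairs $(\alpha_i,\beta_i)$, $i \geq 2$, exactly the normalisation procedure from the proof of Proposition \ref{connect}: use Lemma \ref{torus} to replace $\alpha_i$ inside the torus $T(\alpha_i,\beta_i)$ by a curve with $\phi(\alpha_i) = 0$; adjust $\beta_i$ by a suitable power of $T_{\alpha_i}$ so that $\phi(\beta_i) = 0$; then use the boundary-sum construction with the curves $\delta_j = \partial T(\alpha_j,\beta_j)$ to push the values of $\phi(\beta_i)$ for $i \geq 3$ to the prescribed form dictated by $\phi$ (either all $0$, or with $\phi(\beta_g) \in \{0,1\}$ determined by the parity invariant when $r$ is even).

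The main subtlety will be bookkeeping: I must verify that the modifications of $\alpha_i, \beta_i$ for $i \geq 2$ can genuinely be localised in the complement of $T(a,b)$, so that the frozen first pair is undisturbed. The tori $T_i = T(\alpha_i,\beta_i)$ for $i \geq 2$ are pairwise disjoint and disjoint from $T(a,b)$, and since $g \geq 3$ the complementary subsurface $S - T(a,b)$ has genus $g - 1 \geq 2$, leaving ample room to route the connecting arcs $\epsilon$ used in the sum construction $\beta_i \mapsto \beta_i +_\epsilon \delta_j$. When $j = 1$, the arc simply terminates on $\delta_1 = \partial T(a,b)$ without entering the interior of $T(a,b)$, so no modification of $a$ or $b$ occurs. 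The value $\phi(\delta_1)$ used in this step is computed from $\phi(a), \phi(b)$ by homological coherence (Proposition 3.8 of \cite{Sa19}), and since $\phi(a) = 2$, $\phi(b) = 0$ are fixed for every vertex, $\phi(\delta_1)$ is a universal constant independent of $(a,b)$; the remaining $\phi(\delta_j)$ for $j \geq 2$ equal $1$ after the earlier normalisation, exactly as in Proposition \ref{connect}.

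Given this construction, the proof finishes in one line: for any two vertices $(a_1,b_1)$ and $(a_2,b_2)$, form the bases ${\cal B}(a_1,b_1)$ and ${\cal B}(a_2,b_2)$, pick any diffeomorphism of $S$ carrying one ordered basis to the other, and invoke Humphries--Johnson to conclude that its mapping class lies in ${\rm Mod}(S)[\phi]$; by construction it sends $(a_1,b_1)$ to $(a_2,b_2)$. The hard part of the write-up will be spelling out enough of the sum construction of Proposition \ref{connect} to convince the reader that freezing the first pair causes no issues; the algebraic input (Humphries--Johnson rigidity plus Lemma \ref{torus}) is identical to the earlier case.
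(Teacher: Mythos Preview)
Your proposal is correct and matches the paper's approach exactly: the paper's own proof consists of the single sentence ``The proof is very similar to the proof of Proposition \ref{connect} and will be omitted,'' and your write-up fills in precisely that adaptation. One small slip to clean up: the step ``adjust $\beta_i$ by a suitable power of $T_{\alpha_i}$ so that $\phi(\beta_i)=0$'' does nothing once you have arranged $\phi(\alpha_i)=0$ for $i\geq 2$, since twist linearity then gives $\phi(T_{\alpha_i}\beta_i)=\phi(\beta_i)$; the normalisation of $\phi(\beta_i)$ for $i\geq 2$ must come entirely from the boundary-sum construction with the $\delta_j$ (starting at $i=2$, not $i=3$), exactly as for $i\geq 3$ in Proposition \ref{connect}.
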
  
\begin{proof} The proof is very similar to the proof 
of Proposition \ref{connect} and will be omitted.
\end{proof}

Consider again 
a $\mathbb{Z}/r\mathbb{Z}$-spin
structure $\phi$ on a closed surface $\Sigma_g$ of genus $g\geq 3$. 
Let $c$ be a separating simple closed curve on $\Sigma_g$ 
which is the boundary of a small neighborhood of a vertex
$(a,b)\in {\cal C\cal G}_2^+$. Then $c$ 
decomposes $\Sigma_g$ into a one holed torus $\Sigma_1^1$ and a surface 
$\Sigma_{g-1}^1$ of genus $g-1$ with connected boundary. 
The spin structure restricts to a spin structure on
$\Sigma_1^1$. If $r$ is even then this spin structure has a parity,
and this parity is odd.

Since $c$ is separating, 
the group ${\rm Mod}(\Sigma_{g-1}^1)[\phi]\times {\rm Mod}(\Sigma_1^1)[\phi]$  
contains 
a rank two free abelian central subgroup generated by the
left Dehn twists $T_{c_1},T_{c_2}$ 
about the 
boundary circles $c_1,c_2$ of $\Sigma_{g-1}^1,\Sigma_1^1$.  Define 
\[\Gamma_{g-1,2}^2={\rm Mod}(\Sigma_{g-1}^1)[\phi]
  \times {\rm Mod}(\Sigma_1^1)[\phi]/\mathbb{Z}\]
where the infinite cyclic subgroup $\mathbb{Z}$ is generated
by $T_{c_1}T_{c_2}^{-1}$. Then 
$\Gamma_{g-1,2}^2$ is isomorphic to the stabilizer 
in ${\rm Mod}(\Sigma_g)[\phi]$ of the curve $c$. 
Note that up to isomorphism, 
the group $\Gamma_{g-1,2}^2$ 
does not depend on $c$ since
by  Proposition \ref{connect2}, 
the stabilizers 
in ${\rm Mod}(\Sigma_g)[\phi]$
of vertices of ${\cal C\cal G}_2^+$  
are all conjugate and hence isomorphic. 
 
Observe that the group $\Gamma_{g-1,2}^2$ is an infinite cyclic central extension of 
the product of a finite index subgroup of the mapping class
group 
of a surface $\Sigma_{g-1,1}$ 
of genus $g-1$ with one puncture and a once punctured torus
$\Sigma_{1,1}$. Thus it makes sense
to talk about its action on isotopy classes of 
essential curves on the surfaces 
$\Sigma_{g-1,1}$ and $\Sigma_{1,1}$. The map $\Sigma_{g-1}^1\coprod
\Sigma_1^1\to 
\Sigma_{g-1,1}\coprod \Sigma_{1,1}$ which contracts
each boundary component to a puncture
defines a bijection on such isotopy classes.  

The following observation is the analog of Proposition \ref{spinmap}.

\begin{proposition}\label{spinmap2}
Let $\phi$ be a $\mathbb{Z}/r\mathbb{Z}$-spin structure on 
a closed surface $\Sigma_g$ of genus $g\geq 4$.  
There is a commutative diagram\\
\begin{equation}
\begin{tikzcd}
\Gamma_{g-1,2}^2 \arrow[r,"\iota_1"] \arrow[dr, "\iota_2"]
& \Gamma_{g-1,2}^2 *_A\Gamma_{g-1,2}^2\rtimes \mathbb{Z}/2\mathbb{Z}
\arrow[d, "\rho"] \\
& {\rm Mod}(\Sigma_g)[\phi]
\end{tikzcd} 
\end{equation}
where the homomorphisms $\iota_1,\iota_2$ are inclusions, and the 
homomorphism $\rho$ is surjective. 
The subgroup $A$ of $\Gamma_{g-1,2}^2$ 
is the stabilizer in $\Gamma_{g-1,2}^2$ 
of a 
separating simple closed curve $d$ 
on $\Sigma_{g-1}^2$ which is defined by a 
vertex of the graph ${\cal C\cal G}_2^+$. The curve $d$  
decomposes $\Sigma_{g-1}^1$ into a one-holed torus
and a surface of genus $g-2$ with two boundary components. 
The group $\mathbb{Z}/2\mathbb{Z}$ acts on $\Gamma_{g-1,2}^2*_A\Gamma_{g-1,2}^2$
by exchanging the two factors, and it acts as an automorphism
on $A$.  
\end{proposition}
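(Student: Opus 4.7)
The plan is to mirror the proof of Proposition \ref{spinmap} almost verbatim, using the graph ${\cal C\cal G}_2^+$ and the separating curves it defines in place of ${\cal C\cal G}_1^+$ and its nonseparating vertices. First I would fix two vertices $v_1 = (a_1, b_1), v_2 = (a_2, b_2)$ of ${\cal C\cal G}_2^+$ whose one-holed tori $T_1, T_2$ are disjoint and whose complement $\Sigma_g - (T_1 \cup T_2)$ is a connected surface of genus $g-2$ with two boundary components $c = \partial T_1$ and $d = \partial T_2$. By Proposition \ref{connect2} the stabilizers $\Gamma_c, \Gamma_d \subset {\rm Mod}(\Sigma_g)[\phi]$ are each isomorphic to $\Gamma_{g-1,2}^2$, and their intersection $A = \Gamma_c \cap \Gamma_d$ is the index two subgroup of the stabilizer of the unordered pair $\{c, d\}$ consisting of mapping classes preserving both curves individually. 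Viewed inside $\Gamma_c$, this $A$ is exactly the stabilizer of the vertex $v_2 \in {\cal C\cal G}_2^+$ described in the statement.

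Second, I would exhibit an involution $\Phi \in {\rm Mod}(\Sigma_g)[\phi]$ exchanging $T_1$ and $T_2$. The complementary genus $g-2$ surface admits an orientation-preserving involution swapping its two boundary circles (here $g \geq 4$ is used so that this surface has genus at least $2$), and this can be glued to the canonical isomorphism $T_1 \to T_2$ sending $a_1 \mapsto a_2, b_1 \mapsto b_2$. Since the resulting involution of $\Sigma_g$ sends a geometric symplectic basis to one on which $\phi$ takes the same values, the Humphries--Johnson theorem (Theorem 3.9 of \cite{Sa19}) ensures $\Phi$ preserves $\phi$. Then $\Phi$ exchanges $\Gamma_c$ and $\Gamma_d$ by conjugation and acts as an automorphism of $A$, so the stabilizer of $\{c, d\}$ in ${\rm Mod}(\Sigma_g)[\phi]$ is a $\mathbb{Z}/2\mathbb{Z}$-extension of $A$. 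The universal property of amalgamated free products then yields the homomorphism
\[
\rho : \Gamma_{g-1,2}^2 *_A \Gamma_{g-1,2}^2 \rtimes \mathbb{Z}/2\mathbb{Z} \longrightarrow {\rm Mod}(\Sigma_g)[\phi],
\]
with $\iota_1, \iota_2$ the two canonical inclusions of $\Gamma_{g-1,2}^2 \cong \Gamma_c$ as first and second factors.

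Third, surjectivity of $\rho$ would be proved by the same transitivity trick used in Proposition \ref{spinmap}: since $\Gamma_c \subset \rho(\Gamma)$ is the full stabilizer of $v_1$ in ${\rm Mod}(\Sigma_g)[\phi]$, it suffices to show that $\rho(\Gamma)$ acts transitively on vertices of ${\cal C\cal G}_2^+$. Using the connectedness of ${\cal C\cal G}_2^+$ proved in the appendix, pick an edge path $v_1 = w_0, w_1 = v_2, w_2, \dots, w_k = w$ in ${\cal C\cal G}_2^+$ to an arbitrary target vertex $w$, and inductively produce elements $\Psi_i \in \rho(\Gamma)$ with $\Psi_i(v_1) = w_i$ by applying Proposition \ref{connect2} to the subsurface complementary to the one-holed torus around $w_{i-1}$ (which has genus $g-1$ with connected boundary), alternating between $\Gamma_c$ and its conjugates by the accumulated $\Psi_i$, exactly as in the final paragraph of the proof of Proposition \ref{spinmap}.

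The hard part will be the clean construction of the spin-preserving involution $\Phi$ and the verification that each inductive step along the edge path satisfies the hypotheses of Proposition \ref{connect2}. The existence of $\Phi$ crucially requires the middle surface to have genus at least $2$, which is precisely why the proposition assumes $g \geq 4$ rather than the weaker $g \geq 3$ sufficient for Proposition \ref{connect2}. The inductive step in turn requires that the restriction of $\phi$ to each complementary subsurface of genus $g-1$ is of the form covered by Proposition \ref{connect2}, which should follow from homological coherence (Proposition 3.8 of \cite{Sa19}) together with the fact that the curves $c, d$ are separating, but deserves careful bookkeeping.
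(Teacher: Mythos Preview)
Your approach is essentially the same as the paper's: fix an edge of ${\cal C\cal G}_2^+$, identify the vertex stabilizers with $\Gamma_{g-1,2}^2$, build $\rho$ via the universal property, and prove surjectivity by the edge-path transitivity argument using connectedness of ${\cal C\cal G}_2^+$ together with Proposition~\ref{connect2}. The paper is in fact terser than you are about the involution $\Phi$; it simply asserts that the full stabilizer of $c\cup d$ is a $\mathbb{Z}/2\mathbb{Z}$-extension of $\Gamma_c\cap\Gamma_d$.

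One point to correct: you misidentify where the hypothesis $g\geq 4$ is actually needed. An orientation-preserving involution of a compact orientable surface with two boundary circles that exchanges them exists in \emph{every} genus, including genus $0$ and $1$; and once you choose it to fix a geometric symplectic basis of the middle piece curve-by-curve (e.g.\ realize $\Sigma_g$ as a surface with $g$ handles and swap the first two), the Humphries--Johnson argument goes through without any lower bound on the genus of the middle surface. The place where $g\geq 4$ genuinely enters is the connectedness of ${\cal C\cal G}_2^+$, which is Proposition~\ref{connected2} in the appendix and is only established for $g\geq 4$. You do invoke this result in your third step, so your argument is complete; just relocate the justification for the genus hypothesis there.
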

\begin{proof}
Fix a pair of vertices of the graph ${\cal C\cal G}_2^+$ which
are connected by an edge. These two vertices then determine a pair of 
disjoint separating simple closed curves 
$c,d$ on $\Sigma_g$ which cut from $\Sigma_g$ a one-holed torus each.
These tori are disjoint.
Let $\Gamma_c,\Gamma_d\subset {\rm Mod}(\Sigma_g)[\phi]$ be the stabilizers of 
$c,d$ in the spin mapping class group of $\Sigma_g$. By Corollary \ref{twoconnect}, 
these groups 
are naturally isomorphic to the group $\Gamma_{g-1,2}^2$, and they
intersect in the index two subgroup $A=\Gamma_c\cap \Gamma_d$ 
of the stabilizer
of $c\cup d$ in 
${\rm Mod}(\Sigma_g)[\phi]$ consisting of all elements which preserve both $c,d$
individually. 
The full stabilizer of $c\cup d$ in ${\rm Mod}(\Sigma_g)[\phi]$ 
is a $\mathbb{Z}/2\mathbb{Z}$
extension of $\Gamma_c\cap \Gamma_d$, where the generator 
$\Phi$ of $\mathbb{Z}/2\mathbb{Z}$ acts as involution on 
$A=\Gamma_c\cap \Gamma_d$ exchanging $c$ and $d$. This involution 
extends to an involution of $\Gamma_c*_A\Gamma_d$ 
exchanging the two subgroups $\Gamma_c,\Gamma_d$.

By the universal property of free amalgamated products, 
there is 
a homomorphism 
\[\rho:\Gamma=\Gamma_c*_A\Gamma_d\rtimes \mathbb{Z}/2\mathbb{Z}
\to {\rm Mod}(\Sigma_g)[\phi]. \]
All we need to show is that $\rho$ is surjective, that is, that 
$\rho(\Gamma)= 
{\rm Mod}(\Sigma_g)[\phi]$.

As ${\rm Mod}(\Sigma_g)[\phi]$ acts transitively on the vertices of the graph 
${\cal C\cal G}_2^+$, for this it suffices to show that its subgroup 
$\rho(\Gamma)$ acts transitively on the vertices of ${\cal C\cal G}_2^+$
as well. Namely, by construction, the stabilizer of the vertex
$c$ of ${\cal C\cal G}_1^+$ in $\rho(\Gamma)$ coincides with its
stabilizer in ${\rm Mod}(\Sigma_g)[\phi]$. As $\rho(\Gamma)$ is a subgroup
of ${\rm Mod}(\Sigma_g)[\phi]$, this then implies equality.

To show transitivity of the action of $\rho(\Gamma)$ on the vertices
of ${\cal C\cal G}_2^+$ let $v\in {\cal C\cal G}_2^+$ be any vertex.
By Proposition \ref{connected2}, 
the graph ${\cal C\cal G}_2^+$ is connected and hence we can find an edge path
$(c_i)\subset {\cal C\cal G}_2^+$ connecting $c_0=c$ to $c_k=v$. We also may assume that
$c_1=d$.

By Proposition \ref{connect2}, 
there exists an element 
$\Psi_1\in \Gamma_d\subset \rho(\Gamma)$ such that
$\Psi_1(c_0)=c_2$. Then the stabilizer of 
$c_2$ in ${\rm Mod}(\Sigma_g)[\phi]$ equals $\Psi_1\Gamma_c\Psi_1^{-1}$ and hence
it is contained in $\rho(\Gamma)$. 
Thus we can apply 
Corollary \ref{twoconnect} to $\Psi_1\Gamma_c\Psi_1^{-1}$ and find an element
$\Psi_2\in \rho(\Gamma)$ which maps $c_1$ to $c_3$. 
Proceeding inductively and using the fact
that $\Gamma_c$ is conjugate to $\Gamma_d$ in $\rho(\Gamma)$ 
by the generator of the subgroup $\mathbb{Z}/2\mathbb{Z}$,  
this completes the proof of the proposition.
\end{proof}

For a surface $S$ of genus $g\geq 3$ consider the following
system ${\cal U}_g$
of $3g-2$ simple closed curve on $S$. 
\begin{figure}[ht]
\begin{center}
\includegraphics[width=0.6\textwidth]{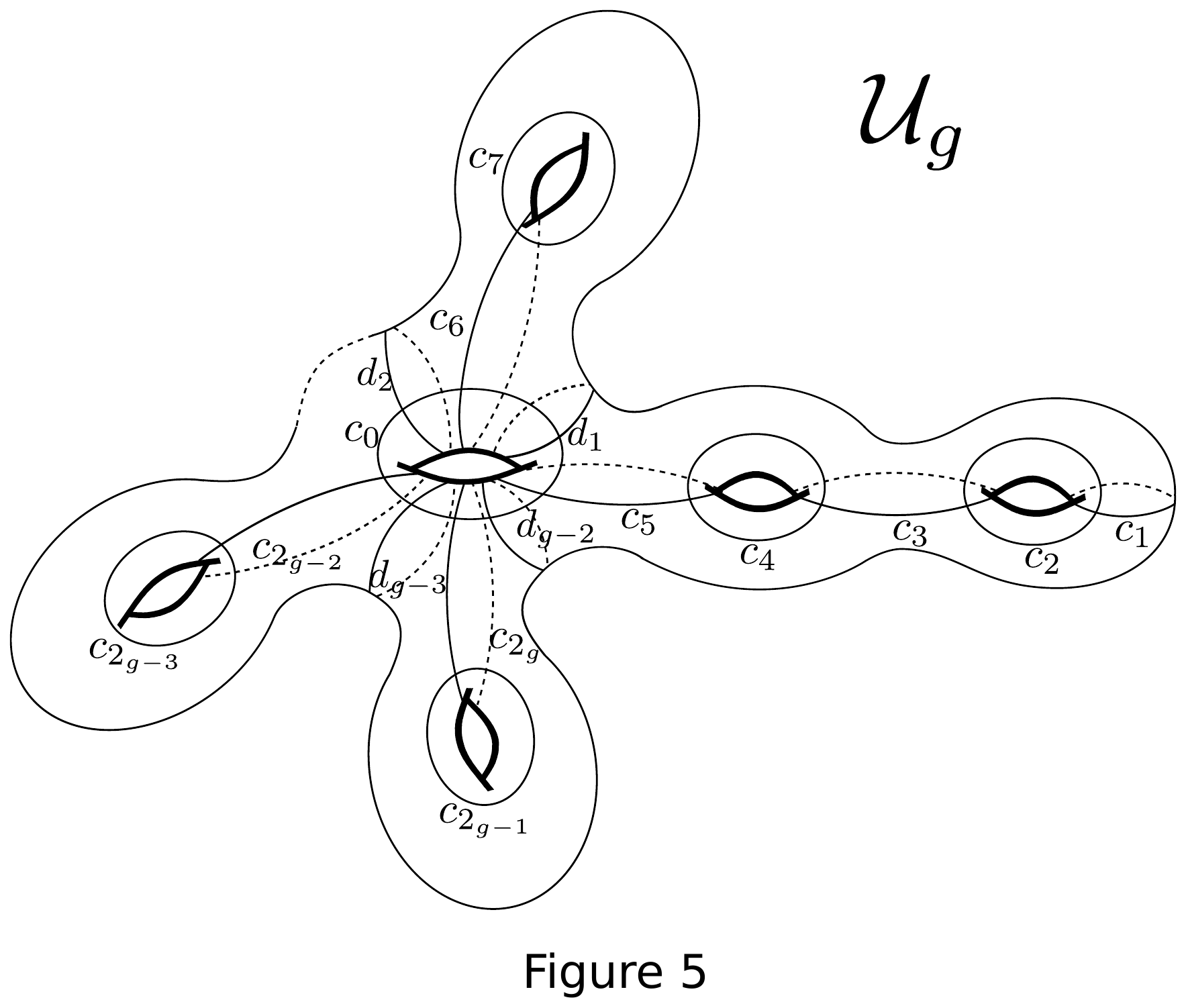}
\end{center}
\end{figure}
Note that for $g=3$, the system ${\cal S}_g$ is just a chain
of $7$ curves which are invariant under a hyperelliptic involution.
It follows from Lemma \ref{preserve} and Lemma \ref{nextcase} 
that the Dehn twists about these curves preserve an even
$\mathbb{Z}/2\mathbb{Z}$-spin structure on $\Sigma_g$.

We use Lemma \ref{generatepointpush} and 
Proposition \ref{spinmap}  to show

\begin{proposition}\label{cross2}
Let $\phi$ be an even $\mathbb{Z}/2\mathbb{Z}$-spin structure on a surface 
$\Sigma_g$ of genus $g\geq 4$. Then 
the group ${\rm Mod}(\Sigma_g)[\phi]$ is 
generated by the Dehn twists about the curves
from the curve system ${\cal U}_g$.
\end{proposition}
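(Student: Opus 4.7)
The plan is to mirror the inductive proof of Proposition \ref{cross}, but replace its use of Proposition \ref{spinmap} (decomposition along a nonseparating curve $a_1$ with $\phi(a_1)=1$) by Proposition \ref{spinmap2} (decomposition along a \emph{separating} simple closed curve $c$ cutting off a one-holed torus, i.e.\ a vertex of ${\cal C\cal G}_2^+$), and to feed into the argument the already proved odd-parity Theorem \ref{generatebycurves}, using Lemma \ref{nextcase} as the parity switch that makes this transition possible. Writing $\Gamma$ for the subgroup of ${\rm Mod}(\Sigma_g)$ generated by the Dehn twists about the curves of ${\cal U}_g$, Lemma \ref{preserve} combined with Lemma \ref{nextcase} already yields $\Gamma\subset {\rm Mod}(\Sigma_g)[\phi]$, and the task is to prove equality, which I would do by induction on $g\geq 4$.

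For the inductive step, I would first select a pair of disjoint curves $a,b$ from ${\cal U}_g$ intersecting in a single point and filling a one-holed torus $T$ whose boundary is a separating curve $c$ that decomposes $\Sigma_g$ into $T$ and a genus-$(g-1)$ surface $\Sigma_{g-1}^1$. Capping $c$ off in $\Sigma_{g-1}^1$ yields, by Lemma \ref{nextcase}, a closed surface $\Sigma_{g-1}$ carrying an \emph{odd} $\mathbb{Z}/2\mathbb{Z}$-spin structure $\hat\phi$. The aim is to prove $\Gamma\cap {\rm Stab}(c)={\rm Mod}(\Sigma_g)[\phi]\cap {\rm Stab}(c)$ by splitting the stabilizer along the two factors of the product underlying $\Gamma_{g-1,2}^2$. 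On the torus side, the two Dehn twists $T_a,T_b$ already generate ${\rm Mod}(\Sigma_1^1)$, which coincides with the spin stabilizer there since the restricted spin structure has odd parity and is preserved by every element. On the $\Sigma_{g-1}^1$ side, the curves of ${\cal U}_g$ lying in $\Sigma_{g-1}^1$ should be arranged to descend (after capping) to the curve system ${\cal C}_{g-1}$ of Theorem \ref{generatebycurves}, which generates ${\rm Mod}(\Sigma_{g-1})[\hat\phi]$; Lemma \ref{generatepointpush} then lifts this to the appropriate index-two subgroup of ${\rm Mod}(\Sigma_{g-1,1})[\hat\phi]$, once two curves in ${\cal U}_g$ are identified as bounding a once-punctured annulus around the new puncture.

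To close the induction via Proposition \ref{spinmap2}, I would next exhibit in $\Gamma$ an involution exchanging $c$ with a disjoint separating curve $c'$ that also cuts off a one-holed torus, in analogy with the use of the Garside element of the Artin group of type $E_6$ (via Theorem 1.4 of \cite{Ma00}) in the proof of Proposition \ref{cross}. Equivalently, a visible symmetry of the picture of ${\cal U}_g$ swapping two disjoint one-holed-torus fillings would realize the involution as an explicit word in Dehn twists from ${\cal U}_g$. With both ingredients in place, Proposition \ref{spinmap2} forces $\rho(\Gamma_{c}*_A\Gamma_{c}\rtimes \mathbb{Z}/2\mathbb{Z})\subset \Gamma$, and hence $\Gamma={\rm Mod}(\Sigma_g)[\phi]$.

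The main obstacle I expect is the bookkeeping verification that the combinatorics of ${\cal U}_g$ really furnishes all three of (a) a subsystem inside $\Sigma_{g-1}^1$ descending to ${\cal C}_{g-1}$ on $\Sigma_{g-1}$ after capping off $c$; (b) a pair of curves enclosing the resulting puncture, so that Lemma \ref{generatepointpush} applies; and (c) an $E_6$-like subsystem supporting the required swapping involution. A secondary point is the base case $g=4$: Theorem \ref{generatebycurves} supplies generators of the odd spin mapping class group on $\Sigma_3$, but the detailed matching of ${\cal U}_4$ with ${\cal C}_3$ on the genus-three side and with $T_a,T_b$ on the torus side must be spelled out by hand. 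The central $\mathbb{Z}$-relation appearing in the definition of $\Gamma_{g-1,2}^2$ is automatic because $T_c\in \Gamma$ follows from the torus-side Dehn twists via the chain relation on $T$.
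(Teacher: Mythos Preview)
Your overall architecture differs from the paper's in an important way. The paper does use Proposition \ref{spinmap2} and the parity switch of Lemma \ref{nextcase}---but \emph{only for the base case} $g=4$, where cutting off the torus filled by $c_1,c_2$ leaves a genus-$3$ surface carrying the odd system ${\cal S}_3={\cal C}_3$, to which Proposition \ref{cross} applies. For the inductive step $g\geq 5$ the paper reverts to a \emph{nonseparating} cut (along a curve $a_7$ with $\phi(a_7)=1$) and Proposition \ref{spinmap}, so that ${\cal U}_g$ descends to ${\cal U}_{g-1}$ and the even-parity inductive hypothesis applies directly. Your plan to use the separating cut at every stage is not really an induction: each instance reduces to the already-proved odd case on $\Sigma_{g-1}$ and never invokes the statement for ${\cal U}_{g-1}$.

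That direct-reduction reformulation could still succeed, but two of your bookkeeping items (a) and (b) hide real work. First, the curves of ${\cal U}_g$ lying in $\Sigma_{g-1}^1$ form (a copy of) ${\cal S}_{g-1}$, not ${\cal C}_{g-1}$; this is harmless since Proposition \ref{cross} already covers ${\cal S}_{g-1}$, but it should be stated correctly. Second---and this is the genuine gap---for the separating cut there is \emph{no} pair of curves from ${\cal U}_g$ bounding a once-punctured annulus around the new puncture, so Lemma \ref{generatepointpush} does not apply out of the box. In the $g=4$ base case the paper obtains the needed point-pushing map by a nontrivial detour: a $D_5$ subsystem together with Matsumoto's relation (Theorem 1.5 of \cite{Ma00}) yields $T_{a_4}^3T_{a_4'}$, the chain relation yields $T_{a_4}T_{a_4'}$, and combining these gives $T_{a_4}T_{a_4'}^{-1}$; only then does Lemma \ref{generatepointpush} finish the job. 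Your uniform-spinmap2 approach would need this same maneuver at every $g$, which is considerably more than your item (b) suggests. Similarly, the involution the paper uses in the separating (base) case comes from the hyperelliptic chain $c_0,\dots,c_7$, not from an $E_6$ Garside element; the $E_6$ element enters only in the nonseparating inductive step.
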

\begin{proof} 
We observed above that the 
subgroup $\Gamma$ of ${\rm Mod}(\Sigma_g)$ generated by the Dehn twist about the
curves from the curve system ${\cal U}_g$ 
is a subgroup of  
${\rm Mod}(\Sigma_g)[\phi]$.  
We have to show that it coincides
with ${\rm Mod}(\Sigma_g)[\phi]$.

To this end we proceed by induction on the genus, beginning with genus 4.
Let $a$ be the separating simple closed curve which intersects $c_3$
in two points and is disjoint from the remaining curves from the system
${\cal U}_4$. It decomposes ${\cal U}_4$ into a one holed torus
$\Sigma_1^1$ containing the curves $c_1,c_2$, and a surface $\Sigma_3^1$ 
of genus $3$ with connected
boundary which contains the curve system ${\cal S}_3$.
As we are looking at a $\mathbb{Z}/2\mathbb{Z}$-spin structure
we know that the pair $(c_1,c_2)$ and hence the curve $a$ 
defines a vertex in ${\cal C\cal G}_2^+$. 
The spin structure $\phi$
induces a spin structure on $\Sigma_3^1$ and $\Sigma_1^1$, again denoted by $\phi$.
It also induces a spin structure on the closed surface $\Sigma_3$ of genus $3$ 
obtained from $\Sigma_3^1$ by capping off the boundary, again denoted
by $\phi$.

It is well known that the mapping class group of one holed tori is 
generated by a pair of Dehn twists about simple closed curves which intersect in a single point.
Thus we have ${\rm Mod}(\Sigma_1^1)\subset \Gamma\cap {\rm Stab}(a)$.

On the other hand, by Proposition \ref{cross}, the Dehn twists about the curves from the
system ${\cal S}_3$ generate the spin mapping class group
${\rm Mod}(\Sigma_3)[\phi]$ of $\Sigma_3$. Thus the projection of 
$\Gamma$ to ${\rm Mod}(\Sigma_3)[\phi]$ is surjective.

To apply Proposition \ref{spinmap2} we have to show 
that the point pushing group of ${\rm Mod}(\Sigma_{3,1})[\phi]$ 
is contained in the projection of $\Gamma\cap 
{\rm Stab}(a)$. We use once more Lemma \ref{generatepointpush}
to this end.

Consider the curves 
$c_0,c_7,c_6,d_1,c_5$ which define a curve system on 
the surface $\Sigma_{3,1}$ whose curve diagram is the Dynkin diagram $D_5$. 
By Theorem 1.5 of \cite{Ma00}, there exists
an explicit word in the Dehn twists about these curves which defines the 
product $T_{a_4}^3T_{a_4^\prime}$ where
$a_4$ is simple closed curve in $\Sigma_3^1$
which intersects $c_4$ in a single point 
and is disjoint from all other curves and where $a_4^\prime$ is the simple closed curve 
which bounds together with $a_4$ a once punctured annulus in $\Sigma_3^1$.

On the other hand, 
the chain relation \cite{FM12} yields that 
$T_{a_4}^\prime T_{a_4}=(T_{c_1}T_{c_2}T_{c_3})^4$.
 Since $T_{a_4},T_{a_4^\prime}$ commute we deduce 
that $(T_{a_4}^\prime)^{-2}T_{a_4}^{-2}T_{a_4}^3T_{a_4^\prime}=
T_{a_4}T_{a_4^\prime}^{-1}\in \Gamma$.
As a consequence, the group $\Gamma\cap {\rm Stab}(a)$ contains
the point pushing map $T_{a_4}T_{a_4^\prime}^{-1}$ about the based loop  
$\alpha$ in $\Sigma_{3}$ which is homotopic to the common projection of
$a_4,a_4^\prime$. Lemma \ref{generatepointpush} now shows that
$\Gamma\cap {\rm Stab}(a)$ contains indeed the point pushing
group of ${\rm Mod}(\Gamma_{3,1})[\phi]$.

By Proposition \ref{spinmap2}, we are left with finding an element
$\Psi\in \Gamma$ which maps $a$ to a curve disjoint from $a$.
However, the curve system ${\cal U}_4$ contains
a subsystem consisting of the curves $c_i$ $(i=0,\dots, 7)$.
The Dehn twists about these curves are well known to generate
the stabilizer of a Weierstrass point in the 
\emph{hyperelliptic mapping class group}, that is, the
subgroup of the mapping class group which commutes with
a hyperelliptic involution (\cite{FM12} and compare Lemma \ref{genus2}). 
This group is isomorphic to the quotient of the Artin braid group in
$2g+1$ strands by its center, and it 
contains an element $\psi$ which maps $a$ to a disjoint curve,
e.g. the boundary of a small neighborhood of
$c_0\cup c_5$. The proposition for $g=4$ now follows from
Proposition \ref{spinmap2}.

By induction, 
let us now assume that the proposition is known for some
$g-1\geq 4$. Consider the curve system ${\cal U}_g$ on a surface of genus
$g$. 
Using the labeling from Figure 5, let $a_7$ be the simple closed
curve on $\Sigma_g$ which intersects the curve $c_7$ in a single point
and is disjoint from any other curve from ${\cal U}_g$. We know that
$\phi(a_7)=1$. 
We aim at showing that
$\Gamma\cap {\rm Stab}(a_7)={\rm Mod}(\Sigma_g)[\phi]\cap
{\rm Stab}(a_7)$. 

To this end cut $\Sigma_g$ open along $a_7$. The resulting surface
is a surface $\Sigma_{g-1}^2$ of genus $g-1$ with two boundary
components. Replace these two boundary components by punctures and let
$\Sigma_{g-1,2}$ be the resulting twice punctured surface.
As before, the spin structure $\phi$ descends to a spin structure,
again denoted by $\phi$, on the surface $\Sigma_{g-1}$ obtained
by closing the punctures, and to a spin structure 
on $\Sigma_{g-1,2}$.
The curve system ${\cal U}_g$ descends to the curve system
${\cal U}_{g-1}$ on $\Sigma_{g-1}$.

By induction hypothesis, the Dehn twists about the curves from
the curve system ${\cal U}_{g-1}$ generate the spin mapping class
group ${\rm Mod}(\Sigma_{g-1})[\phi]$. On the other hand,
we can apply Lemma \ref{generatepointpush} to each of the two
punctures of $\Sigma_{g-1,2}$ as each of these two punctures
is contained in a once punctured annulus bounded by two
curves from the restriction of ${\cal U}_g$ to
$\Sigma_{g-1,2}$. We conclude that the point pushing
maps about these punctures are contained in
the group $\Gamma\cap {\rm Stab}(a_7)$. As a consequence,
the group $\Gamma\cap {\rm Stab}(a_7)$ surjects onto
${\rm Mod}(\Sigma_{g-1,1})$.

To summarize, we showed so far that
$\Gamma$ surjects onto ${\rm Stab}(a_7)[\phi]/\mathbb{Z}$
where $\mathbb{Z}$ is the intersection of 
${\rm Mod}(\Sigma_g)[\phi|$ with the infinite cyclic group of 
Dehn twists about $a_7$. 
Thus to show that
$\Gamma\cap {\rm Stab}(a)={\rm Mod}(\Sigma_g)[\phi]\cap
{\rm Stab}(a)$ it suffices to show that
$\Gamma$ contains the square $T_{a_7}^2$ of the Dehn twist about $a_7$
as well as an involution $\Psi$ which exchanges $a_7$ with a simple closed
curve disjoint from $a_7$. 

To find an involution $\Psi$ as required, 
consider first the case $g=4$. 
The curve system ${\cal U}_4$ contains a curve system
${\cal E}_6
\subset {\cal U}_4$ consisting of the curves $c_7,c_6,c_0,d_1,c_5,c_4$.
By Theorem 1.4 of \cite{Ma00}, there exists 
an explicit word $c(E_6)$ 
in the Dehn twists about the curves from 
the system ${\cal E}_6$, the image of the so-called
\emph{Garside element} of the Artin group of type $E_6$, 
which acts as a reflection 
on the curve diagram of ${\cal E}_6$ exchanging the curves 
$c_7$ and $c_4$.
Then this reflection maps $a_7$ to a disjoint curve $a_7^\prime$ 
and hence it has the desired properties. 

This reasoning extends to any $g\geq 5$. 
Namely, the element $c(E_6)$, viewed as an element of the mapping
class group of $\Sigma_g$, still acts as an involution on
$\Sigma_g$ which maps $a_7$ to a disjoint curve $a_7^\prime$  
and preserves the
subsurface of $\Sigma_g$ filled by the curves $c_7,c_6,c_0,d_1,c_5,c_4$.
Thus there always exists an involution $\Psi\in \Gamma$ which maps 
$a_7$ to a disjoint curve $a_7^\prime$. 

For an application of Proposition \ref{spinmap}, we are left with
showing that 
the square of the Dehn twist about $a_7$ is contained in 
$\Gamma$. By the above discussion, we know that
$\Gamma\cap {\rm Stab}(a_7)$ surjects onto
${\rm Mod}(\Sigma_{g-1,2})[\phi]$. In particular,
$\Gamma$ contains $T_{a_7^\prime}^2$, viewed as an element of
${\rm Stab}(a_7)\subset {\rm Mod}(\Sigma_g)$. Since
$a_7$ is the image of $a_7^\prime$ under an involution contained in
$\Gamma$, it follows that $T_{a_7}^2\in \Gamma$.

To summarize, we showed that $\Gamma\cap {\rm Stab}(a_7)=
{\rm Mod}(\Sigma_g)[\phi]\cap {\rm Stab}(a_7)$, furthermore 
$\Gamma$ contains an involution $\Psi$ which exchanges $a_7$ and $a_7^\prime$.
Proposition \ref{spinmap} now shows that $\Gamma={\rm Mod}(\Sigma_g)[\phi]$. 
This completes the proof of the Proposition.
\end{proof}


We use Proposition \ref{cross} as the base case for the proof of
the second part of 
Theorem \ref{main2} from the introduction. The curve system 
${\cal V}_g$ is defined as in the Theorem \ref{main2}. Note that 
we have ${\cal V}_3={\cal U}_3$. 

\begin{theorem}\label{generatebycurves2}
Let $\phi$ be an even $\mathbb{Z}/2\mathbb{Z}$-spin structure 
on a surface $\Sigma_g$ of genus $g\geq 4$. Then the group 
${\rm Mod}(\Sigma_g)[\phi]$ 
is generated 
by the Dehn twists about the curves from the 
curve system ${\cal V}_g$.
\end{theorem}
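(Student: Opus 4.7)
The plan is to adapt the proof of Theorem \ref{generatebycurves} to the even-parity setting, using Proposition \ref{cross2} in place of Proposition \ref{cross}. Let $\Gamma\leq{\rm Mod}(\Sigma_g)[\phi]$ denote the subgroup generated by Dehn twists about the curves in ${\cal V}_g$. Since $|{\cal V}_g|=2g+1$ while $|{\cal U}_g|=3g-2$, the system ${\cal V}_g$ is obtained from ${\cal U}_g$ by omitting $g-3$ curves, which (exactly paralleling the passage from ${\cal S}_g$ to ${\cal C}_g$ in the odd case) should be the curves $d_3,\dots,d_{g-1}$. By Proposition \ref{cross2} it therefore suffices to show $T_{d_i}\in\Gamma$ for $3\leq i\leq g-1$; note that for $g=4$ this reduces to the single statement $T_{d_3}\in\Gamma$, and $T_{d_2}\in\Gamma$ is already available since $d_2\in{\cal V}_g$.

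For each index $i$ in turn I would run the same three-step argument used in the proof of Theorem \ref{generatebycurves}. First, locate in $\Sigma_g$ a subsurface of genus three with connected boundary filled by a six-curve sub-configuration of ${\cal V}_g$ whose curve diagram is the Dynkin diagram $E_6$; by Theorem 1.4 of \cite{Ma00} the Garside element of the associated Artin group projects to an involution in $\Gamma$ which acts on the $E_6$-diagram by the nontrivial reflection and exchanges a pair of simple closed curves $a_i,a_{i+1}$, each intersecting the central chain of $c$-curves transversely in a single point. Second, produce $T_{a_i}^2\in\Gamma$ by the same argument as in the odd-parity proof: apply Lemma \ref{generatepointpush} together with Lemma \ref{genus2} to a genus-two subsurface $\Sigma_2^1$ of $\Sigma_g$ filled by a four-chain of $c$-curves from ${\cal V}_g$ and flanked by a pair of $d$-curves from ${\cal V}_g$ bounding a once-punctured annulus around a boundary component. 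Third, conjugate by the $E_6$-involution to obtain $T_{a_{i+1}}^2\in\Gamma$, and invoke the chain relation
\[
(T_{a_{i+1}}^2\,T_{c_{2i+1}}\,T_{c_{2i+2}})^3 \;=\; T_{d_{i-1}}\,T_{d_i},
\]
which extracts $T_{d_i}$ from the previously accessible $T_{d_{i-1}}$. Iterating for $i=3,\dots,g-1$ adjoins every missing Dehn twist, completing the reduction to Proposition \ref{cross2}.

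The main obstacle I anticipate is a parity check at the second step. In the odd-parity proof, the genus-two subsurface $\Sigma_2^1$ inherits an odd $\mathbb{Z}/2\mathbb{Z}$-spin structure and Lemma \ref{genus2} applies directly; in the present even-parity setting, the restriction of $\phi$ to the chosen subsurface must again be odd so that Lemma \ref{genus2} remains applicable. This should follow from homological coherence (Proposition 3.8 of \cite{Sa19}) applied to the separating boundary circle of $\Sigma_2^1$ — guaranteeing that $\phi$ descends across the capped boundary — combined with Lemma \ref{nextcase} to track the resulting parity after capping. The subsurface $\Sigma_2^1$ must be chosen so that the descended spin structure on its closure is odd, which is possible because the change-of-parity principle of Lemma \ref{nextcase} provides exactly the required parity flip when one cuts off a one-holed torus whose $\phi$-values are controlled. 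Once this compatibility is in place the remainder of the argument is syntactically identical to the odd-parity case.
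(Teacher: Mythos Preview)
Your overall strategy---reduce to Proposition \ref{cross2} by manufacturing the missing $T_{d_i}$---is correct, but the execution has a real gap, and it is not quite the parity issue you flag.

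First, a bookkeeping point that matters: in ${\cal U}_g$ the $d$-curves are $d_1,\dots,d_{g-2}$ (not $d_1,\dots,d_{g-1}$), and ${\cal V}_g$ is obtained by deleting $d_2,\dots,d_{g-2}$. Thus ${\cal V}_g$ contains \emph{only} $d_1$ among the $d$-curves; in particular $d_2\notin{\cal V}_g$, contrary to what you assume. For $g=4$ the single missing twist is $T_{d_2}$, not $T_{d_3}$ (there is no $d_3$).

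This breaks your second step before any parity question arises. You want a genus-two subsurface filled by a four-chain of $c$-curves and ``flanked by a pair of $d$-curves from ${\cal V}_g$'' bounding a once-punctured annulus; but ${\cal V}_g$ supplies only one $d$-curve, so no such configuration exists inside ${\cal V}_g$. The odd-parity proof used $d_1,d_2\in{\cal C}_g$ for exactly this purpose; here that pair is unavailable, and your appeal to Lemma \ref{nextcase} does not produce the missing curve.

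The paper handles this differently. It first observes that $d_{g-2}$ is the image of $d_1$ under the hyperelliptic involution of the genus-$3$ subsurface filled by $d_1,d_{g-2},c_0,\dots,c_4$; since that involution is a word in Dehn twists about chain curves in ${\cal V}_g$, one gets $T_{d_{g-2}}\in\Gamma$ for free. With $d_1$ and $d_{g-2}$ now both available, the paper works inside the genus-$4$ subsurface filled by $c_1,\dots,c_6,d_1,d_2,d_{g-2}$, on which the available curves form a copy of ${\cal U}_4$; Proposition \ref{cross2} applied to this subsurface, together with a point-pushing argument (Lemma \ref{generatepointpush}) and a half-twist from \cite{Ma00}, yields $T_{d_2}T_{d_{g-2}}^{-1}\in\Gamma$ and hence $T_{d_2}\in\Gamma$. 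Only after securing $T_{d_1}$ and $T_{d_2}$ does the argument revert to the inductive template you describe. The upshot is that the bootstrap for the first missing $d$-twist goes through a genus-$4$ subsurface and Proposition \ref{cross2}, not through a genus-$2$ subsurface and Lemma \ref{genus2}.
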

\begin{proof}
The curve system ${\cal V}_g$ is obtained
from the curve system ${\cal U}_g$ by deleting the
curves $d_2,\dots,d_{g-2}$. 

Let $\Gamma$ be the subgroup of ${\rm Mod}(\Sigma_g)[\phi]$
generated by the Dehn twists about the curves from the
curve system ${\cal V}_g$. By Proposition \ref{cross}, it suffices to show
that the Dehn twists $T_{d_i}$ for $i=2,\dots,g-2$ are contained in 
$\Gamma$. 

To see that $T_{d_{g-2}}\in \Gamma$, note that
$d_{g-2}$ is the image of $d_1$ under the hyperelliptic involution of 
the surface of genus $3$ with connected boundary filled by
the curves $d_1,d_{g-2},c_0,c_1,c_2,c_3,c_4$.

Consider the surface $S$  filled by $c_1,\dots,c_6,d_1,d_2,d_{g-2} $. 
This is a surface of genus $4$ with connected boundary.
The union of the system ${\cal V}_g$ with the curve $d_{g-2}$ 
intersects $S$ in a curve system 
of type ${\cal U}_4$. By Proposition \ref{cross2} and what we have proved so far, 
the stabilizer of the surface $S$ in the 
group $\Gamma$ surjects onto the
spin mapping class group of the surface obtained from $S$ by capping off the boundary.
In particular, if we denote by $e_1,e_3$ the nonseparating simple closed
curves which intersect $c_4$ in a single point, do not intersect any other curve
and form a bounding pair, then $T_{e_1}T_{e_2}^{-1}\in \Gamma$. 

Now by Theorem 1.4 of \cite{Ma00}, the stabilizer in $\Gamma$ of the surface of 
genus 3 with two boundary components obtained from $S$ by removing the 
one-holed torus $T$ filled by $c_1,c_2$ contains 
a half-twist which exchanges the two boundary components of the surface.
Let $S^\prime$ be the surface obtained from 
$S-T$ by replacing two boundary components by punctures. 
Lemma \ref{generatepointpush}, applied to the Dehn twists about the curves
$e_1,e_2$ enclosing the boundary component of $T$, shows that 
the subgroup of the mapping class group of $S^\prime$ which is the 
point pushing group of the puncture corresponding to 
the boundary of $T$ is contained in the projection of the stabilizer of 
$S-T$ in $\Gamma$. 
But then the same holds true for the point pushing 
group of the second puncture of $S^\prime$. 

This shows that we have $T_{d_2}T_{d_{g-2}}^{-1}\in \Gamma$. As 
$T_{d_{g-2}}\in \Gamma$, we conclude that the same holds true for $T_{d_2}$.
To generate the remaining twists about the curves $d_i$ we argue as in the
proof of Theorem \ref{generatebycurves}, using the Dehn twists
$T_{d_1}$ and $T_{d_2}$. 
\end{proof}

\section{Generating the $\mathbb{Z}/4\mathbb{Z}$-spin
  mapping class group in genus 3}\label{special}

The goal of this section
is to prove Theorem \ref{main3} from the
introduction.
Our strategy is similar
to the strategy used in 
Section  \ref{structureof}. We first introduce one more
graph of curves which will be
useful to this end. 

Consider an odd $\mathbb{Z}/2\mathbb{Z}$-spin structure $\phi$ on a 
surface $\Sigma_3$ of genus 3.
A separating simple closed curve $a$ on $\Sigma_3$ 
decomposes $\Sigma_3$ into a 
one-holed torus $T$ and a surface $\Sigma_2^1$ of genus 2 with connected boundary. 
By homological
coherence (Proposition 3.15 of \cite{Sa19}), we have $\phi(a)=1$. 
In particular,
$\phi$ induces a 
spin structure on the surface $\Sigma_2^1$ which has a parity. Define $a$ to be 
\emph{odd} if this parity is odd. Note that
a vertex of the graph ${\cal C\cal G}_2^+$ defined in the appendix and used
in Section \ref{structureeven} 
defines a separating
simple closed curve which is \emph{even}, that is, it is not odd.

Let ${\cal O\cal S}$ be the graph whose vertices are odd separating simple closed
curves on $(\Sigma_3,\phi)$ and where two such curves are connected by an edge
if they are disjoint. 
Let $\Phi$ be a $\mathbb{Z}/4\mathbb{Z}$-spin structure on $\Sigma_3$ whose
$\mathbb{Z}/2\mathbb{Z}$-reduction equals $\phi$. The 
stabilizer 
${\rm Mod}(\Sigma_3)[\phi]$ and its subgroup ${\rm Mod}(\Sigma_3)[\Phi]$ 
act on ${\cal O\cal S}$ as a group of simplicial automorphisms. 
The following observation is similar to Proposition \ref{connect}.
It uses some special properties of $\mathbb{Z}/4\mathbb{Z}$-spin
structures.

\begin{lemma}\label{transitive4}
  \begin{enumerate}
    \item 
      The group ${\rm Mod}(\Sigma_3)[\Phi]$ acts
      transitively on the vertices of ${\cal O\cal S}$.
    \item Let $a\in {\cal O\cal S}$ be any vertex. Then the stabilizer of
      $a$ in ${\rm Mod}(\Sigma_3)[\Phi]$ acts transitively on the
      edges of ${\cal O\cal S}$ issuing from $a$.
 \end{enumerate}       
\end{lemma}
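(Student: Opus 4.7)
The plan is to mimic the change-of-coordinates argument of Proposition \ref{connect}, adapted to the $\mathbb{Z}/4\mathbb{Z}$-structure $\Phi$ and to separating rather than non-separating curves. Both claims reduce to the statement that, given an odd separating curve $a$ (and in the case of (2), a prescribed choice of one-holed torus $T_a$), any two normalized geometric symplectic bases adapted to $a$ can be matched, so that Salter's rigidity principle (Theorem 3.9 of \cite{Sa19}) produces a spin-preserving diffeomorphism.

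Concretely, for part (1), fix odd separating curves $a,a'$ on $\Sigma_3$. Each decomposes $\Sigma_3$ into a one-holed torus with even induced $\phi$-parity and a genus-two piece with odd induced $\phi$-parity; this follows because the Arf invariants of the two sides add modulo $2$ to the Arf invariant of $\phi$, and $a$ is declared odd precisely when the genus-two piece is odd. The first step is to construct, on each side of $a$, a geometric symplectic sub-basis $\{\alpha_1,\beta_1\}$ of $H_1(T_a)$ and $\{\alpha_2,\beta_2,\alpha_3,\beta_3\}$ of $H_1(\Sigma_2^1(a))$. Homological coherence (Proposition 3.8 of \cite{Sa19}) fixes the values of $\Phi$ on the oriented boundary of each side to be $\pm 1$. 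Using twist linearity, I would then modify each $\beta_i$ by powers of $T_{\alpha_i}$ to shift $\Phi(\beta_i)$ by multiples of $\Phi(\alpha_i)$, and swap $\alpha_i\leftrightarrow \beta_i$ when helpful, to bring the $\Phi$-values into a canonical form depending only on the parities of the two sides.

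The special property of $\mathbb{Z}/4\mathbb{Z}$ that the proof exploits is this: on a piece of fixed genus and $\phi$-parity with boundary value $\Phi(\partial)\in\{1,-1\}$ prescribed, the $\Phi$-values on a normalized geometric symplectic basis are determined by the $\mathbb{Z}/4\mathbb{Z}$-refined Arf-type invariant of $\Phi$. For a fixed $\mathbb{Z}/4\mathbb{Z}$-spin structure $\Phi$ on $\Sigma_3$, this refined invariant is the same on both sides of $a$ and $a'$, so the normalized bases have identical $\Phi$-values. Taking any diffeomorphism $\zeta$ sending ${\cal B}(a)$ to ${\cal B}(a')$ gives $\zeta(a)=a'$ (since $a$ is the boundary of a tubular neighborhood of $\alpha_1\cup\beta_1$) and $\zeta^{\ast}\Phi=\Phi$ by Theorem 3.9 of \cite{Sa19}, proving (1).

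For part (2), fix $a\in {\cal O\cal S}$ and two edges $b,b'$ at $a$. Since $b$ is separating and disjoint from $a$, it lies in $\Sigma_2^1(a)$ and cuts off from it a one-holed torus $T_b$; an analogous description holds for $b'$. Here I would rerun the normalization argument with $\{\alpha_1,\beta_1\}\subset T_a$ held fixed and only $\{\alpha_2,\beta_2,\alpha_3,\beta_3\}\subset \Sigma_2^1(a)$ modified, producing a diffeomorphism of $\Sigma_3$ supported in $\Sigma_2^1(a)$ which preserves $\Phi$ and sends $b$ to $b'$. The hardest step is verifying that the iterative normalization inside $\Sigma_2^1(a)$ is unobstructed: one must simultaneously realize the prescribed $\Phi$-values on all four basis curves while arranging that $T_b$ appears as a tubular neighborhood of $\alpha_2\cup\beta_2$. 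I expect to use the device of Lemma \ref{add} --- replacing $\beta_i$ by $\beta_i +_\epsilon \delta_j$ along an arc $\epsilon$ chosen disjoint from the already-normalized curves --- to shift $\Phi(\beta_i)$ by prescribed amounts modulo $4$, combined with the one-holed torus argument of Lemma \ref{torus} to arrange $\Phi(\alpha_i)=0$. The main obstacle is bookkeeping: ensuring that each correction in $\Phi$-value can be realized without disturbing curves already normalized at earlier stages, exactly as in the proof of Proposition 4.9 of \cite{Sa19}.
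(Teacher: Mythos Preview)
Your overall strategy --- normalize a geometric symplectic basis on each side of the separating curve and then invoke the Humphries--Johnson rigidity (Theorem 3.9 of \cite{Sa19}) --- is exactly the paper's approach. The gap is in the normalization step for part (1). On the genus-two side $V$, the odd $\phi$-parity forces (after finding $\alpha_1$ with $\phi(\alpha_1)=1$) that $\phi(\alpha_2)=\phi(\beta_2)=0$, so $\Phi(\alpha_2),\Phi(\beta_2)\in\{0,2\}$. Twisting $\beta_2$ by powers of $T_{\alpha_2}$ only shifts $\Phi(\beta_2)$ by multiples of $\Phi(\alpha_2)\in\{0,2\}$, and swapping $\alpha_2\leftrightarrow\beta_2$ does not enlarge the orbit: the configuration $(\Phi(\alpha_2),\Phi(\beta_2))=(2,0)$ cannot be brought to $(0,0)$ by these moves alone. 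Your appeal to a ``$\mathbb{Z}/4\mathbb{Z}$-refined Arf-type invariant'' is not justified --- no such invariant is established or cited --- and as stated it assumes exactly the uniqueness of normal form that you are trying to prove.

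The paper resolves this with the device you postpone to part (2). Homological coherence gives $\Phi(a)=1$ for the oriented boundary of $V$, so the arc-sum $\alpha_2+_\epsilon a$ of Lemma \ref{add} shifts $\Phi(\alpha_2)$ by $1+1=2$ while the new curve remains disjoint from the tubular neighbourhood $T'$ of $\alpha_1\cup\beta_1$ and still meets $\beta_2$ once. An analogous arc-sum of $\beta_2$ with the boundary $\delta$ of $T'$ (also of $\Phi$-value $1$) handles $\beta_2$. This reduces every case to the single canonical list $\Phi=(1,0;1,0;0,0)$, after which your diffeomorphism argument goes through. Once this correction is in place, part (2) is indeed the same argument carried out inside $\Sigma_2^1(a)$, and is easier because one fewer torus needs to be normalized.
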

\begin{proof}
A vertex $a$ of ${\cal O\cal S}$ decomposes $\Sigma_3$ into 
a one-holed torus $T$ and a surface $\Sigma_3-T$ of genus 2 with connected boundary
and odd spin structure. Since the parity of the spin structure of $\phi$ on $\Sigma_3$ is odd, 
the torus $T$ contains a simple closed curve 
$c$ with $\phi(c)=1$ and hence $\Phi(c)=\pm 1$. Via perhaps changing the orientation of
$c$ we may assume that $\Phi(c)=1$, furthermore there is a simple closed curve $d$ 
in $T$ which intersects $c$ in a single point and satisfies $\Phi(d)=0$. 

By homological coherence (Proposition 3.15 of \cite{Sa19}), if we orient $a$ as 
the oriented boundary of the surface $V=\Sigma_3-T$ then we have 
$\Phi(a)=1$. Since the spin structure induced on $V$ is odd, a geometric
symplectic basis for $V$ consists of simple closed curves
$a_1,b_1,a_2,b_2$ with $\phi(a_1)=1$ and hence $\Phi(a_1)=\pm 1$
(up to ordering). 
A tubular neighborhood $T^\prime$ of 
$a_1\cup b_1$ is an embedded bordered torus in $V$. Choose an orientation
for $a_1$ so that $\Phi(a_1)=1$. After
perhaps replacing $b_1$ by its image under a multiple of 
a Dehn twist about
$a_1$ we may assume that $\Phi(b_1)=0$. 

Consider the pair of curves $a_2,b_2$. Since the spin structure on $V$ is odd, we have
$\phi(a_2)=\phi(b_2)=0$ and hence $\Phi(a_2),\Phi(b_2)\in \{0,2\}$. Our goal is to 
modify $a_2,b_2$ so that $\Phi$ vanishes on the modified curves.
Thus assume without loss of generality that
$\Phi(a_2)=2$. 
Connect $a_2$ to the boundary curve $a$ of $V$ by an embedded arc
$\epsilon$ which is disjoint from $T^\prime$ and $b_2$, and connect $b_2$ to the boundary 
$\delta$ of 
$T^\prime$ by an embedded arc $\eta$ which is disjoint from $\epsilon$ and $a_2$.  
Since $\Phi(a)=1$ for the orientation 
as a boundary curve of $V$, we obtain that $\Phi(a_2+_\epsilon a)=0$, furthermore
this curve is disjoint from $T^\prime$ and intersects $b_2$ in a single point.
Replace $a_2$ by $a_2+_\epsilon a$. 
Similarly, if $\Phi(b_2)=2$ then we replace $b_2$ by $b_2+_\eta \delta$. 
This process yields a geometric symplectic basis for $\Sigma_3$ consisting of 
simple closed curves disjoint from $a$.

Given any 
other odd separating curve $a^\prime$ on $\Sigma_3$ we can find in the same way
a geometric symplectic basis for $\Sigma_3$ consisting of curves disjoint from $a^\prime$.
Then there is a mapping class which maps $a$ to $a^\prime$ and identifies the 
geometric symplectic bases in such a way that the values of $\Phi$ on these curves
match up. By the result of Humphries and Johnson \cite{HJ89}, this implies that this 
mapping class is contained in ${\rm Mod}(\Sigma_3)[\Phi]$. 
In other words, there is an element of ${\rm Mod}(\Sigma_3)[\Phi]$ which maps 
$a$ to $a^\prime$. This shows the first part of the lemma.

The proof of the second part of the lemma is completely analogous
but easier and will be omitted.
\end{proof}

\begin{lemma}\label{graphconnected3}
The graph ${\cal O\cal S}$ is connected.
\end{lemma}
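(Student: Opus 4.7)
The plan is to reduce connectedness of ${\cal O\cal S}$ to that of the graph ${\cal C\cal G}_1^+$, which is established in Corollary \ref{connected}, exploiting the correspondence between odd separating curves on $\Sigma_3$ and \emph{even} embedded one-holed tori. For any vertex $a \in {\cal O\cal S}$ the one-holed torus $T_a$ cut off by $a$ is even for $\phi$ (since by the Arf formula (\ref{arf}) the genus-$2$ complement of $T_a$ is odd while $\phi$ is odd overall), so $T_a$ contains a nonseparating simple closed curve $c$ with $\phi(c) = 1$. Conversely, for any such $c$ together with a transverse dual curve $\hat c$, the boundary $\partial N(c\cup \hat c)$ of a regular neighborhood is an odd separating curve.

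First I would establish an \emph{edge lifting}: if $c, d$ are adjacent in ${\cal C\cal G}_1^+$---that is, disjoint nonseparating curves with $\phi(c) = \phi(d) = 1$ and $\Sigma_3 - (c\cup d)$ connected---then there exist disjoint even one-holed tori $T_\alpha \supset c$ and $T_\beta \supset d$, whose boundaries $\alpha := \partial T_\alpha$ and $\beta := \partial T_\beta$ are adjacent in ${\cal O\cal S}$. Since $d$ is nonseparating in the genus-$2$-with-boundary surface $\Sigma_3 - c$, there exists a curve $\hat c$ dual to $c$ and disjoint from $d$; set $T_\alpha := N(c\cup \hat c)$. A standard separation analysis shows that $d$ remains nonseparating in the genus-$2$ surface $\Sigma_3 - T_\alpha$, so one may then choose a curve $\hat d$ dual to $d$ inside $\Sigma_3 - T_\alpha$ and set $T_\beta := N(d\cup \hat d)$; these tori are disjoint by construction.

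Given any two vertices $a, a' \in {\cal O\cal S}$, I choose $c \subset T_a$ and $c' \subset T_{a'}$ with $\phi$-value one, connect them by an edge path $c = c_0, c_1, \dots, c_k = c'$ in ${\cal C\cal G}_1^+$ (possible by Corollary \ref{connected}), and apply the edge lifting to each consecutive pair $(c_i, c_{i+1})$ to produce disjoint odd separating curves $\alpha_i \supset c_i$ and $\beta_i \supset c_{i+1}$. The remaining task is to connect within ${\cal O\cal S}$ the vertices $\beta_{i-1}$ and $\alpha_i$ (for each intermediate $i$, both enclosing the same curve $c_i$ but generically differing), and likewise $a$ with $\alpha_0$ and $\beta_{k-1}$ with $a'$. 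All of these reduce to the following \emph{stitching lemma}: any two vertices $\gamma, \gamma' \in {\cal O\cal S}$ whose enclosing tori $T_\gamma, T_{\gamma'}$ share a common simple closed curve $c$ with $\phi(c) = 1$ lie in the same component of ${\cal O\cal S}$.

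The main obstacle is the stitching lemma. I would approach it by producing a third vertex $\gamma'' \in {\cal O\cal S}$ disjoint from both $\gamma$ and $\gamma'$; such a $\gamma''$ is necessarily the boundary of an even one-holed torus embedded in the complement $\Sigma_3 - (T_\gamma \cup T_{\gamma'})$. Putting $T_\gamma, T_{\gamma'}$ in generic position so that $T_\gamma \cap T_{\gamma'}$ is a thin annular neighborhood of $c$, an Euler-characteristic count identifies this complement as a connected genus-one surface $S'$ with two boundary components $\gamma, \gamma'$. By homological coherence (Proposition 3.8 of \cite{Sa19}) and additivity of the Arf invariant along the separating curves $\gamma, \gamma'$ (together with the known parities $\phi|_{T_\gamma}, \phi|_{T_{\gamma'}}$ even and $\phi|_{\Sigma_3}$ odd), the parity of the induced spin structure on $S'$ is forced, and a combination of Lemma \ref{torus} with Lemma \ref{add} applied to suitably chosen simple loops in $S'$ produces a nonseparating simple closed curve with $\phi$-value one. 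Its thin one-holed-torus neighborhood in $S'$ has boundary the sought-after $\gamma''$, completing the argument.
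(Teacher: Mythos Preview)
Your approach is genuinely different from the paper's, which applies the Putman trick (Lemma~\ref{putmantrick}) directly: using Lemma~\ref{transitive4} and Theorem~\ref{generatebycurves}, the group ${\rm Mod}(\Sigma_3)[\phi]$ acts vertex-transitively on ${\cal O\cal S}$ and is generated by the Dehn twists about the curves of ${\cal C}_3$, so it suffices to exhibit, for a single well-chosen vertex $a$, a path from $a$ to $T_{c_i}(a)$ for each generator. Only $T_{c_2}$ moves $a$, and a single intermediate vertex does the job. This is a few lines.

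Your route via ${\cal C\cal G}_1^+$ is more geometric and avoids the heavy input of Theorem~\ref{generatebycurves}, but the stitching lemma is broken as stated. You assert that two distinct one-holed tori $T_\gamma, T_{\gamma'}\subset \Sigma_3$ sharing a curve $c$ can be isotoped so that $T_\gamma\cap T_{\gamma'}$ is an annular neighbourhood of $c$. In genus~3 this configuration is impossible: such an intersection would force $\gamma=\partial T_\gamma$ and $\gamma'=\partial T_{\gamma'}$ to be disjoint, but any two disjoint non-isotopic separating curves in $\Sigma_3$ that each bound a one-holed torus bound \emph{disjoint} tori (an easy case analysis using that a one-holed torus contains no essential separating curve). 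Hence $T_\gamma\cap T_{\gamma'}=\emptyset$, contradicting $c\subset T_\gamma\cap T_{\gamma'}$. So whenever $\gamma\not=\gamma'$ the curves $\gamma,\gamma'$ necessarily intersect, and your Euler-characteristic computation of the complement $S'$ never gets off the ground.

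The stitching problem can be salvaged, but not in one step: writing $T_\gamma=N(c\cup\hat c)$ and $T_{\gamma'}=N(c\cup\hat c')$, you must interpolate between the dual arcs $\hat c,\hat c'$ by a path in the nonseparating arc graph ${\cal A}(C_1,C_2)$ of $\Sigma_3-c$ (Lemma~\ref{connectedarc}), and at each step produce a vertex of ${\cal O\cal S}$ disjoint from the two adjacent tori. Even then you must verify that the genus-one two-holed complement at each step actually contains a nonseparating curve of $\phi$-value one; your appeal to Lemma~\ref{torus} and Lemma~\ref{add} does not obviously produce this, and a parity argument is needed. All of this is doable but is substantially more work than your sketch suggests.
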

\begin{proof}
Consider the curve system ${\cal C}_3$ on the surface $\Sigma_3$. 
Using the labels for the curves shown in Figure 4, 
there is an odd separating simple closed curve
$a$ which intersects the curve $c_2$ in two points and is disjoint from the remaining
curves from the system ${\cal C}_3$.
Using the Putman trick, Theorem \ref{generatebycurves}
and the first part of Lemma \ref{transitive4}, 
 all we need to show is that the curve $a$ can be connected to $T_{c_2}(a)$ by an edge path
 in ${\cal O\cal S}$.
 
 However, the curve $a^\prime$ which intersects the curve $c_4$ in two points and is
 disjoint from the remaining curves from the system ${\cal C}_3$ is separating and odd, 
 and it is disjoint from both $a$ and $T_{c_2}(a)$. Thus $a,a^\prime,T_{c_2}(a)$ is
 an edge path in ${\cal O\cal S}$ which connects $a$ to $T_{c_2}(a)$. 
\end{proof}

Using the labels from Figure 2 from the introduction, 
let $d$ be the separating simple closed curve on $\Sigma_3$ which 
intersects the curve $c_2$ in two points and is disjoint from the remaining
curves from the system ${\cal E}_6$. We show

\begin{lemma}\label{suffices}
The subgroup $\Gamma$ of ${\rm Mod}(\Sigma_3)$ which is generated by
the Dehn twists about the curves from the curve system ${\cal E}_6$ 
equals the stabilizer ${\rm Mod}(\Sigma_3)[\Phi]$
of an odd $\mathbb{Z}/4\mathbb{Z}$-spin
structure $\Phi$ on $\Sigma_3$ if and only if
its intersection with ${\rm Stab}(d)$ coincides with
${\rm Stab}(d)\cap {\rm Mod}(\Sigma_3)[\Phi]$. 
\end{lemma}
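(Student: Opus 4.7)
My plan is to reduce the equality $\Gamma = {\rm Mod}(\Sigma_3)[\Phi]$ to the stabilizer calculation by means of a standard orbit-stabilizer argument on the graph ${\cal O\cal S}$. One direction is trivial: if $\Gamma={\rm Mod}(\Sigma_3)[\Phi]$ then intersecting both sides with ${\rm Stab}(d)$ yields the equality of intersections. For the converse, assume $\Gamma\cap {\rm Stab}(d)={\rm Stab}(d)\cap {\rm Mod}(\Sigma_3)[\Phi]$. First I would record that every Dehn twist about a curve from ${\cal E}_6$ preserves $\Phi$, so that $\Gamma\subset {\rm Mod}(\Sigma_3)[\Phi]$; this uses homological coherence together with a direct inspection of the values of $\Phi$ on the curves of ${\cal E}_6$ and the Arf-type formula. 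The core of the argument is then to show $\Gamma\cdot d={\rm Mod}(\Sigma_3)[\Phi]\cdot d$, i.e.\ that $\Gamma$ acts transitively on the vertex orbit of $d$ under ${\rm Mod}(\Sigma_3)[\Phi]$. Once this is achieved, for any $g\in {\rm Mod}(\Sigma_3)[\Phi]$ we pick $h\in \Gamma$ with $h d=gd$; then $h^{-1}g\in {\rm Stab}(d)\cap {\rm Mod}(\Sigma_3)[\Phi]=\Gamma\cap {\rm Stab}(d)\subset \Gamma$, whence $g\in \Gamma$.

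To prove the transitivity claim I will combine Lemma \ref{transitive4} and Lemma \ref{graphconnected3}. By Lemma \ref{transitive4}(1) the group ${\rm Mod}(\Sigma_3)[\Phi]$ acts transitively on the vertices of ${\cal O\cal S}$, so the orbit under ${\rm Mod}(\Sigma_3)[\Phi]$ is all of ${\cal O\cal S}$. Since ${\cal O\cal S}$ is connected by Lemma \ref{graphconnected3}, it suffices by induction on combinatorial distance from $d$ to prove that every neighbor of $d$ lies in $\Gamma\cdot d$. Lemma \ref{transitive4}(2) tells us that ${\rm Stab}(d)\cap {\rm Mod}(\Sigma_3)[\Phi]$ acts transitively on the edges of ${\cal O\cal S}$ incident to $d$, and by hypothesis this group is contained in $\Gamma$. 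So it is enough to produce a \emph{single} element $\psi\in \Gamma$ with $\psi d \neq d$ and $\psi d$ a vertex of ${\cal O\cal S}$ adjacent to $d$: transitivity on incident edges then propagates this to every neighbor.

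For the production of $\psi$ I will reuse the Garside element $c(E_6)$ of Matsumoto (Theorem 1.4 of \cite{Ma00}) which is an explicit word in the Dehn twists about the curves of ${\cal E}_6$, hence lies in $\Gamma$, and which realizes the reflection of the $E_6$ curve diagram exchanging $c_1$ and $c_3$. Because $d$ is the boundary of a regular neighborhood of the one-holed torus spanned by $c_1$ and the corresponding arc of $c_2$, the element $c(E_6)$ sends $d$ to the analogous separating curve $d^\prime$ obtained by reflecting the role of $c_1$ and $c_3$ through the central vertex; by construction $d^\prime$ is disjoint from $d$, and oddness of $d^\prime$ as a vertex of ${\cal O\cal S}$ follows from the fact that $c(E_6)\in \Gamma\subset {\rm Mod}(\Sigma_3)[\Phi]$ and hence preserves the parity of the induced spin structure on each complementary piece.

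The main obstacle I expect is the last step: confirming rigorously that the image $d^\prime=c(E_6)(d)$ is indeed a vertex of ${\cal O\cal S}$ adjacent to $d$, i.e.\ that $d$ and $d^\prime$ can be realized disjointly and that $d^\prime$ is odd. Disjointness is a direct inspection of the configuration of curves that the one-holed tori around $c_1$ and around $c_3$ fit into two disjoint collar neighborhoods inside the $E_6$ carrier surface; oddness follows from $\Phi$-equivariance of $c(E_6)$ combined with the fact that $d$ was chosen odd. Once these two geometric facts are in place, the Putman-style propagation finishes the argument.
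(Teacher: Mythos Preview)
Your proposal is correct and follows essentially the same approach as the paper: both argue via orbit--stabilizer on the connected graph ${\cal O\cal S}$, use Lemma~\ref{transitive4} to reduce to producing a single element of $\Gamma$ that moves $d$ to an adjacent odd vertex $d^\prime$, and invoke Matsumoto's Garside element $c(E_6)$ for that step. One cosmetic point: your description of the reflection as ``exchanging $c_1$ and $c_3$'' uses the labeling from Section~\ref{structureof}, whereas in Section~\ref{special} the ${\cal E}_6$ curves carry the labels of Figure~2 and the paper phrases the same involution as the reflection fixing the edge $c_0$--$c_3$ and sending $d$ (which meets $c_2$) to $d^\prime$ (which meets $c_4$); this is only a naming discrepancy and does not affect the argument.
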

\begin{proof} Since $\Gamma$ is a subgroup of
  ${\rm Mod}(\Sigma_3)[\Phi]$, 
  the condition is clearly necessary,
  so we have to show sufficiency. Thus assume that
  $\Gamma\cap {\rm Stab}(d)={\rm Mod}(\Sigma_3)[\Phi]\cap {\rm Stab}(d)$.
  
Consider again the graph 
${\cal O\cal S}$.
Lemma \ref{graphconnected3} shows that ${\cal O\cal S}$  
 is connected. 
Moreover, by Lemma \ref{transitive4}, 
the group ${\rm Mod}(\Sigma_3)[\Phi]$
acts transitively on the directed edges of
${\cal O\cal S}$ as a group of simplicial
automorphisms. The curve $d$ is odd and
hence a vertex of ${\cal O\cal S}$.

By Theorem 1.4 of \cite{Ma00}, the group $\Gamma$ contains
an involution which induces a reflection 
in the curve diagram of the curve system ${\cal E}_6$ 
at the edge connecting the vertices $c_0$ and $c_3$. It 
maps the simple closed
curve $d$ to the separating simple closed curve $d^\prime$
which intersects $c_4$ in two points and 
is disjoint from all other curves from the system. 
Since $d$ is odd, 
the same is true for $d^\prime$.

We use this as follows. 
Let $e$ be any vertex of ${\cal O\cal S}$ 
and let $d=d_0,d_1,d_2,\dots,d_m=e$ be an
edge path in ${\cal O\cal S}$ 
which connects $d$ to $e$.
We may assume that $d_1=d^\prime$. 
Since there exists an element of $\Gamma$ which maps $d$ to $d^\prime$, 
the stabilizer of $d^\prime$ in $\Gamma$ is conjugate
to the stabilizer of $d$ and hence by our assumption, 
it coincides with the stabilizer of $d^\prime$ in 
${\rm Mod}(\Sigma_3)[\Phi]$. In particular,
by the second part of Lemma \ref{transitive4}, 
there exists an element of $\Gamma$ which fixes
$d^\prime$ and maps $d_0$ to $d_2$. 
Arguing inductively
as in the proof of Proposition \ref{spinmap},
we conclude that $\Gamma$ acts transitively
on the odd separating curves in $\Sigma_3$. 
As $\Gamma$ is a subgroup of ${\rm Mod}(\Sigma_3)[\Phi]$ and furthermore 
the stabilizer of a vertex in $\Gamma$ coincides with its
stabilizer in ${\rm Mod}(\Sigma_3)[\Phi]$,
it has to coincide with ${\rm Mod}(\Sigma_3)[\Phi]$.
The lemma follows.
\end{proof}

Our final goal is to show that the group $\Gamma$ fulfills the assumption in 
Lemma \ref{suffices}, which completes the proof of Theorem \ref{main3}.
We proceed in two steps.

Let $a_1,a_5$ be the nonseparating simple closed curves on $\Sigma_3$ which intersect
$c_1,c_5$ in a single point and are disjoint from the remaining curves from the 
system ${\cal E}_6$.
We have $\Phi(a_j)=\pm 1$, in particular, by Lemma 3.13 of \cite{Sa19}, 
the 
intersection of ${\rm Mod}(\Sigma_3)[\Phi]$ with the infinite cyclic group of Dehn twists
about the curve $a_j$ is generated by $T_{a_j}^4$.

\begin{lemma}\label{dehntwist4}
For $j=1,5$, the group $\Gamma$ contains $T_{a_j}^4$.
\end{lemma}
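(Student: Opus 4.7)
Plan:

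The plan is to apply Matsumoto's Theorem 1.5 of \cite{Ma00} to a suitable $D_5$-subconfiguration of ${\cal E}_6$, exploiting a degeneration specific to the closed surface $\Sigma_3$. This follows the same template as the proof of Proposition \ref{cross2}, but here a further simplification occurs because the ambient surface is closed.

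For $j\in\{1,5\}$, the curve $c_j$ is a leaf of the $E_6$-diagram sitting at the end of a length-$2$ arm (in contrast to the single leaf at the end of the length-$1$ arm). Removing $c_j$ from ${\cal E}_6$ therefore leaves five curves whose intersection diagram is a copy of the Dynkin diagram $D_5$, with branching at the original central vertex. Applying Matsumoto's Theorem 1.5 of \cite{Ma00} to this $D_5$-subconfiguration produces an explicit word in the Dehn twists about these five curves equal to a product of the form $T_{a_j}^3\,T_{a_j^{\prime}}$, where $a_j$ is the curve appearing in the statement of the lemma, and $a_j^{\prime}$ is a companion simple closed curve; both $a_j$ and $a_j^{\prime}$ are disjoint from the five $D_5$-curves.

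The crucial step is to identify $a_j^{\prime}$ in the closed surface $\Sigma_3$. A regular neighborhood of the $D_5$-curves in $\Sigma_3$ has Euler characteristic $-4$, so its complement has Euler characteristic $0$; using the definitions from Section \ref{graphsofcurves} one checks that this complement is the disjoint union of the one-holed torus $T_j=N(c_j\cup a_j)$ and a topological disk. Any essential simple closed curve in $\Sigma_3$ disjoint from all five $D_5$-curves must therefore lie inside $T_j$, and in a one-holed torus any two disjoint essential simple closed curves are isotopic. Consequently $a_j^{\prime}$ is isotopic to $a_j$ in $\Sigma_3$, so $T_{a_j^{\prime}}=T_{a_j}$ in ${\rm Mod}(\Sigma_3)$, and the Matsumoto word reduces to $T_{a_j}^3\,T_{a_j}=T_{a_j}^4$.

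Since this word is by construction a product of Dehn twists about curves from ${\cal E}_6$, we conclude $T_{a_j}^4\in\Gamma$ as required, with the cases $j=1$ and $j=5$ handled by the same argument (or by invoking the Garside involution $c(E_6)$ of \cite{Ma00}, which exchanges the two length-$2$ arms of the diagram). The only step I expect to require care is the identification of the Matsumoto product in the closed-surface setting: specifically, verifying that the companion curve $a_j^{\prime}$ of Matsumoto's theorem is indeed a simple closed curve in $\Sigma_3$ disjoint from the $D_5$-curves (so that the topological argument above forces $a_j^{\prime}\simeq a_j$), rather than a more subtle configuration that persists in the closed surface.
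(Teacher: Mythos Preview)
Your strategy is exactly the paper's: delete $c_j$ from ${\cal E}_6$ to get a $D_5$-configuration and apply Matsumoto. The paper cites Theorem~1.3(d) of \cite{Ma00}, which directly records the word as $T_{a_j}^4$; you instead go through Theorem~1.5 (as used in Proposition~\ref{cross2}) and argue that the two resulting boundary twists coalesce in the closed surface. That route is fine, but your topological identification of the complement is wrong. You claim $\Sigma_3\setminus N(D_5)$ is the one-holed torus $T_j=N(c_j\cup a_j)$ together with a disk, yet $c_j$ is \emph{not} disjoint from the $D_5$-curves: in the $E_6$ diagram $c_1$ is adjacent to $c_2$ and $c_5$ to $c_4$, and both $c_2,c_4$ remain in the $D_5$-system after deleting $c_j$. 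So $c_j$ crosses $N(D_5)$ and $T_j\not\subset\Sigma_3\setminus N(D_5)$.

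The correct statement is that $\Sigma_3\setminus N(D_5)$ is an annulus. Since ${\cal E}_6$ is admissible, $N({\cal E}_6)$ is a genus-$3$ surface with one boundary circle, and it is obtained from $N(D_5)$ by attaching a single band along the arc $c_j\cap(\Sigma_3\setminus N(D_5))$. For the genus to jump to $3$ the band must join two \emph{distinct} boundary circles of $N(D_5)$; this forces $N(D_5)$ to have genus $2$ with two boundary components and $\Sigma_3\setminus N(D_5)$ to be connected with Euler characteristic $0$, hence an annulus. Its core is $a_j$ (an essential curve disjoint from all five $D_5$-curves), so both boundary circles of $N(D_5)$ are isotopic to $a_j$ in $\Sigma_3$, and Matsumoto's word is $T_{a_j}^3T_{a_j}=T_{a_j}^4$. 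With this correction your argument goes through.
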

\begin{proof} Consider the 
  subsystem 
  ${\cal D}_5^j$ $(j=1,5)$
  obtained from the curve system ${\cal E}_6$ 
  by removing 
the curve $c_j$.  
By Theorem 1.3 (d) of \cite{Ma00},
the mapping class $T_{a_j}^4$ can be represented as 
an explicit
word in the Dehn twists about the curves from this
curve system.
Thus we have $T_{a_j}^4\in \Gamma$. 
\end{proof}

Lemma \ref{dehntwist4} is used in the proof of the final step towards
Theorem \ref{main3}.

\begin{lemma}\label{fullboundary}
The stabilizer in $\Gamma$ of the curve $d$ coincides with the 
stabilizer of $d$ in ${\rm Mod}(\Sigma_3)[\Phi]$. 
\end{lemma}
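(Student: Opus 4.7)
My plan is to establish the non-trivial inclusion $\mathrm{Mod}(\Sigma_3)[\Phi]\cap \mathrm{Stab}(d)\subseteq \Gamma\cap \mathrm{Stab}(d)$; the opposite inclusion is immediate since $\Gamma\subseteq \mathrm{Mod}(\Sigma_3)[\Phi]$. The curve $d$ decomposes $\Sigma_3$ into a one-holed torus $T$ containing $c_1,c_2$ and the nonseparating curve $a_1$, and a surface $V$ of genus $2$ with connected boundary containing the four-chain $d_2 - c_0 - c_4 - c_3$ and the curve $a_5$. The standard decomposition of the stabilizer of a separating curve (Theorem 3.18 of \cite{FM12}) gives an extension
\[
1\to \mathbb{Z}\langle T_d\rangle \to \mathrm{Stab}(d) \to \mathrm{Mod}(T_\bullet)\times \mathrm{Mod}(V_\bullet) \to 1,
\]
where $T_\bullet, V_\bullet$ denote once-punctured versions. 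Since every simple closed curve on $\Sigma_3$ is either supported in $T$, supported in $V$, or obtained by a connect-sum across $d$ whose $\Phi$-value is determined via Lemma \ref{add}, an element of $\mathrm{Stab}(d)$ preserves $\Phi$ if and only if each of its two restrictions does. Thus it suffices to show that $\Gamma\cap \mathrm{Stab}(d)$ contains $T_d$ together with the full subgroups of $\mathrm{Mod}(T)$ and of $\mathrm{Mod}(V)$ preserving $\Phi$ on their respective sides.

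First I note that the chain relation $(T_{c_1}T_{c_2})^6 = T_d$, applied to the two curves filling $T$, puts $T_d$ in $\Gamma$. On the torus side, the inclusion $\Gamma\subseteq \mathrm{Mod}(\Sigma_3)[\Phi]$ forces $\Phi(c_1)=\Phi(c_2)=0$, so $T_{c_1}, T_{c_2}\in \mathcal{E}_6$ preserve $\Phi|_T$; together with $T_d$ they generate a finite-index subgroup of $\mathrm{Mod}(T)$, and the only curve in $T$ with $\Phi$-value $\pm 1$ is $a_1$. The corresponding fourth-power twist $T_{a_1}^4$, which generates the remaining piece of $\mathrm{Mod}(T)[\Phi|_T]$ coming from $a_1$, lies in $\Gamma$ by Lemma \ref{dehntwist4}. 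This handles the torus side.

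On the genus-$2$ side, the restriction of $\mathcal{E}_6$ to $V$ is the $A_4$-chain $d_2 - c_0 - c_4 - c_3$, combinatorially identical to the system $\mathcal{S}_2$ of Lemma \ref{genus2}. Homological coherence (Proposition 3.8 of \cite{Sa19}) gives $\phi(d)=1$, so capping off $d$ descends $\phi|_V$ to a $\mathbb{Z}/2\mathbb{Z}$-spin structure on $\Sigma_2$; a parity computation in the spirit of Proposition \ref{parity} shows it to be odd. Lemma \ref{genus2} then says that Dehn twists about this chain generate $\mathrm{Mod}(\Sigma_2)[\phi]$. To recover the full group $\mathrm{Mod}(V)[\Phi|_V]$, I adjoin on the one hand the point-pushing group of the capped point, which is provided by Lemma \ref{generatepointpush} applied to the pair $d_2, c_3$ (bounding with the capping disk a once-punctured annulus), and on the other hand the fourth-power twist $T_{a_5}^4\in \Gamma$ of Lemma \ref{dehntwist4}, which contributes the refinement from $\mathbb{Z}/2\mathbb{Z}$ to $\mathbb{Z}/4\mathbb{Z}$-spin level.

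The main obstacle is this last step on the genus-$2$ side: verifying that the chain twists, the capping point-pushing, and the single element $T_{a_5}^4$ together exhaust $\mathrm{Mod}(V)[\Phi|_V]$ rather than just a proper subgroup. The delicate point is that $4 \nmid 2g-2 = 2$ for $\Sigma_2$, so $\Phi|_V$ does not descend to a $\mathbb{Z}/4\mathbb{Z}$-spin structure on the capped surface; the full $\mathbb{Z}/4\mathbb{Z}$-refinement must be tracked on the bordered surface $V$ alone. The argument will run as follows: modulo the capping Birman sequence, $\mathrm{Mod}(V)[\Phi|_V]$ is generated over $\mathrm{Mod}(\Sigma_2)[\phi]$ by fourth powers of Dehn twists about nonseparating curves of $\Phi$-value $\pm 1$; and conjugating the single representative $T_{a_5}^4$ by chain twists (which act transitively on nonseparating curves in $V$ with fixed $\phi$-value, by an analog of Proposition \ref{connect} for $V$) produces all such fourth powers, completing the verification.
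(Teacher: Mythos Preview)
Your overall decomposition of $\mathrm{Stab}(d)$ into a torus side $G_1$ and a genus-$2$ side $G_2$ is the same framework the paper uses, but each half of your argument has a genuine gap.

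On the torus side, the curve $c_2$ intersects $d$ in two points and is therefore not contained in $T$; of the $\mathcal{E}_6$ curves only $c_1$ lies in $T$. So $(T_{c_1}T_{c_2})^6=T_d$ is not a relation inside $\mathrm{Mod}(T)$, and the claim that $T_{c_1},T_{c_2}$ already give a finite-index subgroup of $\mathrm{Mod}(T)$ collapses. (The correct two-chain in $T$ is $c_1,a_1$, but $T_{a_1}\notin\Gamma$, so that route to $T_d$ also fails.) The paper obtains $T_d\in\Gamma$ from the $A_4$ chain relation on the $V$-side instead, and it never tries to produce generators of $G_1$ supported in $T$: it conjugates $G_1$ into the already-handled $G_2$ using the $E_6$ Garside involution in $\Gamma$ that exchanges $c_1\leftrightarrow c_5$ and $c_2\leftrightarrow c_4$.

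On the genus-$2$ side, your appeal to Lemma~\ref{generatepointpush} cannot succeed. No two curves of the $A_4$ chain in $V$ cobound an annulus in $\Sigma_2$ (they are pairwise non-homologous), so the hypothesis of the lemma fails; and the conclusion you want is in any case impossible: since $\Phi(d)=1$ for $d$ oriented as the boundary of $V$, Lemma~\ref{pointpush1}(2) says the point-pushing group meets $\mathrm{Mod}(\Sigma_3)[\Phi]$ only in the index-$16$ subgroup $\Lambda_2=\ker\bigl(\pi_1(\Sigma_2)\to H_1(\Sigma_2,\mathbb{Z}/2\mathbb{Z})\bigr)$, so the full $\pi_1(\Sigma_2)$ cannot lie in $\Gamma\subset\mathrm{Mod}(\Sigma_3)[\Phi]$. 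The paper therefore aims only for $\Lambda_2$: it combines $T_{a_5}^4\in\Gamma$ (Lemma~\ref{dehntwist4}) with the $3$-chain relation yielding $T_{a_5}T_{a'}\in\Gamma$ to produce $(T_{a_5}T_{a'}^{-1})^2$, the \emph{square} of the point-push along $a_5$; propagates this by $\mathrm{Mod}(\Sigma_2)[\phi]$-transitivity to all nonseparating loops with $\phi=1$; and handles the commutator subgroup of $\pi_1(\Sigma_2)$ separately via Dehn twists about two explicit separating curves in $V$ that it shows lie in $\Gamma$. Your proposed fix in the last paragraph, generation by fourth powers $T_c^4$ over $\mathrm{Mod}(\Sigma_2)[\phi]$, is not the right bookkeeping: what is missing from the $A_4$-generated subgroup is the point-pushing subgroup $\Lambda_2$, and the work lies in producing exactly $\Lambda_2$ rather than all of $\pi_1(\Sigma_2)$.
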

\begin{proof}
Let $T$ be the one-holed torus component of $\Sigma_3-d$. 
The stabilizer ${\rm Stab}(d)[\Phi]$ 
of $d$ in ${\rm Mod}(\Sigma_3)[\Phi]$ 
is the quotient of the product of
two subgroups $G_1,G_2$ by an infinite cyclic central subgroup.
The group $G_1$ is 
the group of all 
isotopy classes of diffeomorphisms of $\Sigma_3$ which
fix the bordered surface $S=\Sigma_3-T$ pointwise and preserve the
spin structure $\Phi$.
It is isomorphic to 
the subgroup of the mapping class group of a one-holed torus
which preserves the spin structure $\Phi$.
The group $G_2$ is the group of all isotopy classes of
diffeomorphisms of $\Sigma_3$ which fix $T$ pointwise and preserve
the spin structure $\Phi$. The center of ${\rm Stab}(d)[\Phi]$
is generated by a Dehn twist $T_d$ about $d$.

Consider the curve system ${\cal A}_4\subset {\cal E}_6$ which consists of the
curves $c_0,c_3,c_4,c_5$.
It is contained in the subsurface $\Sigma_2^1=\Sigma_3-T$ of $\Sigma_3$ 
of genus 2 which
is bounded by $d$. 
The Dehn twists about these curves generate
a subgroup ${\cal A}({\cal A}_4)$
of $\Gamma\cap G_2$ which is isomorphic to the braid group in
five strands  
(see \cite{FM12} or \cite{Ma00} for the last statement). 
By Theorem 1.4 of \cite{Ma00}, the Dehn twist $T_d$ can be
represented as an explicit word in the Dehn twists about the curves
from the curve system ${\cal A}(A_4)$. In particular, we have 
$T_d\in \Gamma$.

Let $\Sigma_{2,1}$ be the surface obtained from
$\Sigma_2^1=\Sigma_3-T$ by replacing 
the boundary component by a puncture, and let $\Sigma_2$ be
obtained from $\Sigma_{2,1}$ by forgetting the puncture. 
Let $\phi$ be the $\mathbb{Z}/2\mathbb{Z}$-reduction of the spin structure $\Phi$.
The spin structure $\phi$ induces an odd spin structure on 
$\Sigma_{2,1}$ and $\Sigma_2$, again denoted by $\phi$.
By Lemma \ref{genus2}, the subgroup ${\cal A}({\cal A}_4)$ of $\Gamma\cap G_1$ 
surjects onto the spin mapping class group ${\rm Mod}(\Sigma_2)$.
Consequently 
the restriction of the puncture forgetful
homomorphism $G_2\to {\rm Mod}(\Sigma_2)[\phi]$ to $\Gamma\cap G_2$
is surjective. 

By homological coherence, if we orient $d$ as the oriented boundary of 
the surface $\Sigma_3-T$, then we have $\Phi(d)=1$. Thus by 
Lemma \ref{pointpush1}, the intersection of the point pushing group
$\pi_1(\Sigma_2)$ with the stabilizer of $\Phi$ in ${\rm Mod}(\Sigma_{2,1})$ 
is the preimage of the sublattice $\Lambda$ of $H_1(\Sigma_2,\mathbb{Z})$ 
generated by squares of primitive homology classes of oriented simple closed curves
under the natural homomorphism
$\pi_1(\Sigma_2)\to H_1(\Sigma_2,\mathbb{Z})$.
Or, equivalently, it equals the kernel of the surjective
homomorphism $\pi_1(\Sigma_2)\to H_1(\Sigma_2,\mathbb{Z}/2\mathbb{Z})$. 
In particular, ${\rm Mod}(\Sigma_{2,1})[\Phi]\cap \pi_1(\Sigma_2)$ contains 
the commutator
subgroup of $\pi_1(\Sigma_2)$.

We claim first that the square of the point pushing map along a 
simple closed curve $\alpha$
with $\Phi(\alpha)=\pm 1$ is contained in $\Gamma$. To this end note that
as $\Phi(\alpha)=\pm 1$ if and only if we have $\phi(\alpha)=1$ where
$\phi$ is the $\mathbb{Z}/2\mathbb{Z}$-reduction of $\Phi$,  
the group ${\rm Mod}(\Sigma_2)[\phi]$ and hence $\Gamma$ 
acts transitively on these curves. Thus by equivariance,  
it suffices to verify this claim for a single such curve.

Consider again the simple closed curve $a_5\subset \Sigma_{2,1}$
with $\Phi(a_5)=\pm 1$
which intersects $c_5$ in a single point and is disjoint from all other curves from the 
curve system ${\cal E}_6$. 
Let $a^\prime$ be the simple closed curve  which bounds with 
$a_5$ and the boundary circle
$C$ of $\Sigma_{2,1}$ 
a pair of pants, that is,
$a_5$ and $a^\prime$ bound a holed annulus in $\Sigma_2^1$. 
By the chain relation in the mapping class group
(see \cite{FM12}), 
we have
\[(T_{c_0}T_{c_3}T_{c_4})^6=T_{a_5}T_{a^\prime}=\zeta\in \Gamma.\]
On the other hand, Lemma \ref{dehntwist4} shows that 
$T_{a_5}^4\in \Gamma$. As $T_{a_5}$ and $T_{a^\prime}$ commute, we have
$T_{a_5}^{-4}\zeta^2=T_{a_5}^{-2}T_{a^\prime}^2\in \Gamma$,
and this is just the square of the point
pushing transformation (via replacing the boundary
circle $C$ by a puncture) along 
$a_5$. Thus the square of the point pushing transformation
about $a_5$ is contained in $\Gamma$, which is what we wanted to show.

Now the sublattice $\Lambda\subset H_1(\Sigma_2,\mathbb{Z})$ is additively 
generated by elements of the form $2b$ where $b$ is an oriented simple closed
curve with $\phi(b)=1$ and hence we conclude that $\Gamma\cap \pi_1(\Sigma_2)$
surjects onto $\Lambda$.

We are left with showing that the point pushing map along any element in the commutator 
subgroup of $\pi_1(\Sigma_2)$ is contained in $\Gamma$. 
The commutator 
subgroup of $\pi_1(\Sigma_2)$ is generated by separating simple closed
curves. Note that ${\rm Mod}(\Sigma_2)[\phi]$ acts transitively on separating simple closed curves 
in $\Sigma_2$. 
Namely, as the parity of $\phi$ is odd, the formula (\ref{arf}) for the Arf invariant shows that
any separating simple closed curve $c$ on $\Sigma_2$ decomposes $\Sigma_2$ into two one holed
tori $T_1,T_2$ such that up to exchanging $T_1$ and $T_2$, 
there is a geometric symplectic basis $\alpha_1,\beta_1$ for $T_1$ with 
$\phi(\alpha_1)=1,\phi(\beta_1)=0$, and a geometric symplectic basis 
$\alpha_2,\beta_2$ for $T_2$ with $\phi(\alpha_2)=\phi(\beta_2)=0$. 
Then transitivity of the action of ${\rm Mod}(\Sigma_2)[\phi]$ on separating simple closed
curves follows once again from \cite{HJ89}. Thus 
it suffices to show the following: there exists a separating
simple closed curve $e$ in $\Sigma_2$ such that the
point pushing map along $e$ 
in $\Sigma_2$ is contained 
in $\Gamma$.

Now by Theorem 1.4 of \cite{Ma00}, the Dehn twist about the separating simple closed
curve $d^\prime$ which intersects $c_4$ in two points and is disjoint from the remaining curves
from ${\cal E}_6$ is contained in $\Gamma$. This separating curve is odd in the
sense described above. The second separating curve which bounds together 
with the boundary circle $C$ and $d^\prime$ 
a pair of pants is the boundary of a tubular neighborhood of 
$c_0\cup c_1$. As the Dehn twists about $c_0,c_1$ are contained in $\Gamma$,
the same holds true for the Dehn twist about that curve. We conclude that 
the point pushing maps about separating simple closed curves is contained in 
$\Gamma$.

To summarize, the quotient of $\Gamma\cap G_2$ by the infinite
cyclic group of Dehn twists about the boundary curve $d$ 
contains a generating set
for the point pushing subgroup of $G_2/\mathbb{Z}$ and hence it
contains this 
point pushing subgroup. As $\Gamma\cap G_2$ surjects onto the quotient
$G_2/\mathbb{Z}$ by the point pushing subgroup, we conclude that
$\Gamma$ surjects onto $G_2/\mathbb{Z}$. But $\Gamma$ contains the
infinite cyclic center of $G_2$ and hence $\Gamma\cap G_2=G_2$.

To complete the proof of the lemma we are left with showing that the 
subgroup $G_1$ of ${\rm Mod}(\Sigma_3)[\Phi]$ is contained in $\Gamma$.
But conjugation by the involution in $\Gamma$ which acts as an involution on 
the curve diagram of the curve system ${\cal E}_6$ and exchanges $c_1$ and 
$c_5$ and $c_2$ and $c_4$ maps $G_1$ to a subgroup of $G_2$ and hence
to a subgroup of $\Gamma$. Thus $G_1\subset \Gamma$ as well and
we conclude that indeed,
$\Gamma\cap {\rm Stab}(d)={\rm Mod}(\Sigma_3)[\Phi]\cap {\rm Stab}(d)$.
\end{proof}

\begin{remark}
Theorem \ref{main3} classifies connected components of 
the preimage in the Teichm\"uller space of abelian differentials
of the odd component of the stratum of abelian differentials
on a surface $\Sigma_3$ of genus $3$ with a single zero. Those components
correspond precisely to odd $\mathbb{Z}/4\mathbb{Z}$-spin structures
on $\Sigma_3$.
\end{remark}

\begin{remark}
The results in this article give a general recipe for finding generators
of spin mapping class groups. This recipe is motivated by
the recent work on compactifications of strata of abelian 
differentials in \cite{BCGGM18} and the goal to obtain a topological
interpretation of this compactification. 
\end{remark}

\begin{appendix}
\section  
{Additional graphs of nonseparating curves with
  fixed spin value}\label{addition}

In this appendix we complement the
main result in Section \ref{graphsofcurves}
by studying connectedness of 
some additional geometrically defined graphs related to spin
structures. The proofs do not use new ideas. We use the
assumptions and notations from Section \ref{graphsofcurves}.

We begin with adding more constraints
to the graph ${\cal C\cal G}_1^+$.
Define a graph ${\cal C\cal G}_1^{++}$ as follows. Vertices of 
${\cal C\cal G}_1^{++}$ are ordered pairs $(c,d)$ of nonseparating
simple closed curves $c,d$
such that $\phi(c)=\pm 1, \phi(d)=0$ and that $c,d$ intersect in a single
point. Then $c\cup d$ fills a one-holed torus $T(c,d)\subset S$. 
Two such pairs 
$(c,d),(c^\prime,d^\prime)$ 
are
connected by an edge if and only if the tori
$T(c,d)$ and $T(c^\prime,d^\prime)$ are 
disjoint. We use Corollary \ref{connected} to show

 \begin{lemma}\label{next+}
For $g\geq 4$ the graph ${\cal C\cal G}_1^{++}$ is connected. 
\end{lemma}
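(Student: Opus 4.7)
The plan is to reduce connectedness of ${\cal C\cal G}_1^{++}$ to the already-established connectedness of ${\cal C\cal G}_1^+$ (Corollary \ref{connected}) by means of the forgetful map $\pi\colon{\cal C\cal G}_1^{++}\to{\cal C\cal G}_1^+$ defined by $\pi(c,d)=c$. Concretely, I will establish two statements: (a) \emph{edge lifting}: if $c,c'$ span an edge of ${\cal C\cal G}_1^+$, then there exist $d,d'$ such that $(c,d)$ and $(c',d')$ span an edge of ${\cal C\cal G}_1^{++}$; and (b) \emph{fiber connection}: any two vertices $(c,d_1),(c,d_2)$ in $\pi^{-1}(c)$ can be joined by a length-two path in ${\cal C\cal G}_1^{++}$. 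Granted (a) and (b), connectedness is immediate: given arbitrary $(c,d)$ and $(c',d')$, invoke Corollary \ref{connected} to obtain an edge path $c=c_0,\dots,c_n=c'$ in ${\cal C\cal G}_1^+$, lift each consecutive edge via (a), and glue the lifts within each fiber using (b).

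For (a), the hypothesis means that $c$ and $c'$ are disjoint with $S\setminus(c\cup c')$ connected, hence a surface of genus $g-2\geq 2$ with four boundary circles (two copies each of $c$ and $c'$). Choose disjoint simple arcs $\alpha,\alpha'$ in this complement, $\alpha$ joining the two copies of $c$ and $\alpha'$ joining the two copies of $c'$; this is possible because the complement has positive genus. Regluing $c$ and $c'$ produces simple closed curves $e,e'\subset S$ with $e$ meeting $c$ transversely in one point and disjoint from $c'\cup\alpha'$, and $e'$ meeting $c'$ in one point and disjoint from $c\cup e$. Sufficiently small tubular neighborhoods $T(c,e)$ and $T(c',e')$ are then disjoint one-holed tori. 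Since $\phi(c)=\pm 1$, twist-linearity from Definition \ref{spin} supplies an integer $k$ with $\phi(T_c^k e)=0$; as $T_c$ is supported in an annular neighborhood of $c$, the curve $d:=T_c^k e$ still lies inside $T(c,e)$ and meets $c$ in a single point. An analogous choice of $d'\subset T(c',e')$ with $\phi(d')=0$ completes the construction of the required edge of ${\cal C\cal G}_1^{++}$.

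For (b), let $N$ be a regular neighborhood of $c\cup d_1\cup d_2$ in $S$; a direct Euler-characteristic computation bounds the genus of $N$ by $2$, so $S\setminus N$ contains a component of genus at least $g-3\geq 1$. Pick an embedded one-holed torus $T^*$ inside such a component, chosen so that a generator of $H_1(T^*)$ remains nonseparating in $S$, and, inside $T^*$, produce a pair $(c^*,d^*)$ with $c^*\cap d^*$ a single point, $\phi(c^*)=\pm 1$, and $\phi(d^*)=0$. This can be carried out using twist-linearity and the addition formula Lemma \ref{add}, together with homological coherence $\phi(\partial T^*)=1$, exactly as in the proof of Proposition \ref{connect}. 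Then $(c^*,d^*)\in{\cal C\cal G}_1^{++}$ has a one-holed torus $T(c^*,d^*)\subset T^*\subset S\setminus N$ disjoint from both $T(c,d_1)$ and $T(c,d_2)$, and is therefore adjacent to both vertices.

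The main obstacle lies in step (b): one must locate a one-holed torus $T^*$ disjoint from $N$ \emph{and} accommodating the prescribed spin values, which is where the hypothesis $g\geq 4$ enters through the requirement that $S\setminus N$ retain positive genus. Step (a) is more straightforward once one observes that the arc-level construction, combined with a single Dehn-twist correction, suffices to realize the spin constraints.
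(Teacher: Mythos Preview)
Your overall architecture---project via $\pi(c,d)=c$ to ${\cal C\cal G}_1^+$, lift edges, and connect within fibers---is essentially the paper's strategy as well, and your edge-lifting step (a) is fine.

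The gap is in step (b). The claim that a regular neighborhood $N$ of $c\cup d_1\cup d_2$ has genus at most $2$ is false: the Euler characteristic of $N$ is $\chi(N)=-2-k$ where $k=\vert d_1\cap d_2\vert$, so the genus of $N$ grows with $k$. In fact nothing prevents $c\cup d_1\cup d_2$ from \emph{filling} $S$ once $k\geq 2g-3$, in which case $S\setminus N$ is a union of disks and contains no one-holed torus at all. A secondary problem: even when a one-holed torus $T^*$ is available in the complement, there is no guarantee it contains a curve $c^*$ with $\phi(c^*)=\pm 1$; for $r=2$ a torus may well have $\phi\equiv 0$ on all its essential curves, and your appeal to homological coherence of $\partial T^*$ and Lemma~\ref{add} does not produce such a curve inside $T^*$.

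The paper repairs exactly this by avoiding the uncontrolled intersection $d_1\cap d_2$. View $d_1,d_2$ as arcs in $S-c$ joining the two boundary copies of $c$ and connect them by an edge path in the nonseparating arc graph ${\cal A}(A_1,A_2)$ (Lemma~\ref{connectedarc}). For consecutive arcs $b_i,b_{i+1}$ in this path, the corresponding closed curves are \emph{disjoint} and the complement $S-(b_i\cup b_{i+1})$ has genus $g-2\geq 2$, which is enough room to find a vertex $(u,v)\in{\cal C\cal G}_1^{++}$ disjoint from both $T(c,b_i)$ and $T(c,b_{i+1})$. This replaces your single length-two jump in the fiber by a sequence of length-two jumps along the arc-graph path.
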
   
\begin{proof}
  Let $(a,b),(c,d)$ be two vertices in the graph
  ${\cal C\cal G}_1^{++}$.
  Then $a,c$ are vertices in the graph ${\cal C\cal G}_1^+$.
  Connect $a=a_0$ to $c=a_k$ by an edge path $(a_i)$ 
in ${\cal C\cal G}_1^+$; this is possible by Corollary \ref{connected}. 
Our goal is to construct inductively a path
$(c_j,d_j)\subset {\cal C\cal G}_1^{++}$ connecting
$(a,b)$ to $(c,d)$ which 
passes through vertices $(c_{j_i},d_{j_i})$ with $c_{j_i}=a_i$. 

To this end observe that if the curve 
$b$ is disjoint from $a_1$, then we can find a curve 
$\hat d_1$ which intersects $a_1$
in a single point and is disjoint from $(a,b)$.
In particular, $a\cup b$ is disjoint from $c_1\cup \hat d_1$.

We can not expect in general that $\phi(\hat d_1)=0$. 
However, as before, there exists some $k\in \mathbb{Z}$ such that
$\phi(T_{a_1}^k(\hat d_1))=0$. Define $c_1=a_1$ and 
$d_1=T^k_{c_1}(\hat d_1)$ and
note that $d_1$ is disjoint from $a\cup b$ and intersects
$c_1$ in a single point. Thus the pair $(c_1,d_1)$ is a vertex
in ${\cal C\cal G}_1^{++}$ which is connected to
$(a,b)$ by an edge.

Let us now assume 
that $b$ is not disjoint from $a_1$. Since
$b$ intersects $a$ in a single point, it determines a vertex 
in the nonseparating arc graph ${\cal A}(A_1,A_2)$ 
of $S-a$; here $A_1,A_2$ are the two boundary components of
$S-a$ which glue back to $a$. Denote this arc by $b_0$.

Connect $b_0$ to an arc $b^\prime$ disjoint from $a_1$
by an edge path $(b_i)$ in ${\cal A}(A_1,A_2)$. 
This is possible by Lemma \ref{connectedarc}. 
Cut $S-a$ open along $b_0$. The result is a
surface of genus $g-1\geq 3$ with connected boundary, 
and $S-(b\cup b_1)$ is a surface of genus $g-2\geq 2$ 
with two boundary components.

A surface of genus at least 2 
contains a nonseparating curve $u$ with $\phi(u)=1$,
and in fact it contains a pair
$(u,v)\in {\cal C\cal G}_1^{++}$. In other words, there exists a vertex
of ${\cal C\cal G}_1^{++}$ 
which is disjoint from $a,b,b_1$. Connect $(a,b)$ to $(a,b_1)$ by 
the edge path $(a,b)\to (u,v)\to (a,b_1)$ and proceed by induction. 
\end{proof}

Define a graph ${\cal D}$ as follows. Vertices are ordered pairs $(x,y)$ 
where $x$ is a vertex in ${\cal C\cal G}_1^{++}$ and where $y$ is a disjoint 
simple closed nonseparating cuve with $\phi(y)=0$.
Two such pairs are connected
by an edge if they can be realized disjointly. The following
observation is a straighforward application of 
Lemma \ref{next+} and the tools used so far. Its proof will be omitted.

\begin{lemma}\label{next++}
For $g\geq 4$ the graph ${\cal D}$ is connected. 
\end{lemma}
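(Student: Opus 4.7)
The plan is to reduce Lemma \ref{next++} to Lemma \ref{next+} by a lifting argument. Given two vertices $v_0=(x_0,y_0)$ and $v_1=(x_1,y_1)$ in ${\cal D}$, I first apply Lemma \ref{next+} to produce an edge path $\xi_0=x_0,\xi_1,\dots,\xi_k=x_1$ in ${\cal C\cal G}_1^{++}$ with $T(\xi_i)\cap T(\xi_{i+1})=\emptyset$ for every $i$, and then lift this path to an edge path in ${\cal D}$.

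The basic building block is the lift of a single edge. For each $i$, the complement $W_i=S\setminus(T(\xi_i)\cup T(\xi_{i+1}))$ is a connected surface of genus $g-2\geq 2$, so it contains two disjoint embedded one-holed tori $\Theta_i^L,\Theta_i^R$. By Lemma \ref{torus}, each of these tori carries a simple closed curve, $z_i^L\subset\Theta_i^L$ and $z_i^R\subset\Theta_i^R$, on which $\phi$ vanishes; both are nonseparating in $S$, since the boundary of an embedded one-holed torus is separating in $S$ and a curve nonseparating in the torus remains so after gluing the torus back in. Consequently $(\xi_i,z_i^L)$ and $(\xi_{i+1},z_i^R)$ are adjacent in ${\cal D}$.

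The genuine work is to splice these basic lifts together: the endpoint $(\xi_i,z_{i-1}^R)$ of the previous lift and the starting point $(\xi_i,z_i^L)$ of the next one share the $\xi_i$-coordinate and hence are not directly adjacent in ${\cal D}$. I bridge them by an auxiliary vertex $(\xi_i^\star,w_i)\in{\cal D}$ adjacent to both, with $T(\xi_i^\star)$ and $w_i$ contained in $S\setminus(T(\xi_i)\cup z_{i-1}^R\cup z_i^L)$. When $z_{i-1}^R$ and $z_i^L$ are disjoint and form a non-separating pair inside $S\setminus T(\xi_i)$, this complement has genus $g-3\geq 1$, which is enough room to construct $\xi_i^\star=(c_i^\star,d_i^\star)$ with $\phi(c_i^\star)=\pm 1$ and $\phi(d_i^\star)=0$ via the geometric symplectic basis modification argument from the proof of Proposition \ref{connect} (Dehn twists combined with the additivity Lemma \ref{add}), and then to find $w_i$ by one more application of Lemma \ref{torus}. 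Analogous bridges handle the endpoint transitions from $v_0$ to $(\xi_0,z_0^L)$ and from $(\xi_k,z_{k-1}^R)$ to $v_1$.

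The main obstacle is that the curves $z_{i-1}^R$ and $z_i^L$ are picked independently on consecutive edges of the ${\cal C\cal G}_1^{++}$ path and need not be disjoint, let alone a non-separating pair, so the bridge step can fail to apply directly. This is the exact analogue of the case distinction in the proof of Lemma \ref{next+}, where the arc $b$ may fail to be disjoint from $a_1$, and I propose to resolve it by the same device: invoke the connectedness of the nonseparating arc graph (Lemma \ref{connectedarc}) inside $S\setminus T(\xi_i)$ to interpolate between the two offending curves via a chain of pairwise disjoint replacements, inserting one further auxiliary bridge vertex in ${\cal D}$ at each step of the resulting arc-graph path; the genus bound on $S\setminus T(\xi_i)$ (at least $g-1\geq 3$) supplies sufficient room. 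Concatenating the basic lifts, the splicing bridges, and the endpoint corrections then produces an edge path in ${\cal D}$ from $v_0$ to $v_1$, and no new idea beyond those already present in the proofs of Proposition \ref{connect} and Lemma \ref{next+} is required.
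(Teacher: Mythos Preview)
The paper omits the proof of this lemma entirely, remarking only that it is ``a straightforward application of Lemma \ref{next+} and the tools used so far.'' Your overall strategy---lift an edge path in ${\cal C\cal G}_1^{++}$ to ${\cal D}$ by decorating each vertex with an auxiliary curve of spin value $0$, then splice consecutive lifts together---is precisely what that remark points to, and for $g\geq 5$ your sketch goes through essentially as written.

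There are, however, two soft spots at $g=4$ that you have not actually closed. First, in the bridging step you claim that genus $g-3\geq 1$ in the complement $S\setminus(T(\xi_i)\cup z_{i-1}^R\cup z_i^L)$ is enough to house both a vertex $\xi_i^\star\in{\cal C\cal G}_1^{++}$ \emph{and} a further disjoint nonseparating curve $w_i$ with $\phi(w_i)=0$ obtained from Lemma \ref{torus}. But $\xi_i^\star$ already fills a one-holed torus, and Lemma \ref{torus} needs a second disjoint one-holed torus for $w_i$; that is genus $2$, not $1$. Worse, a single one-holed torus need not contain any curve with $\phi=\pm 1$ at all: if $\phi$ vanishes on a dual pair $a,b$ in the torus, twist linearity forces $\phi\equiv 0$ on every curve there, so the existence of $\xi_i^\star$ inside a bare genus-$1$ piece is not automatic and really does require exploiting the boundary components via Lemma \ref{add}, which you invoke only by allusion. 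Second, your use of Lemma \ref{connectedarc} to interpolate between $z_{i-1}^R$ and $z_i^L$ is a type mismatch: these are closed curves in $S\setminus T(\xi_i)$, not arcs between two distinguished boundary circles, so the nonseparating arc graph does not apply directly. The natural replacement is Lemma \ref{connected0} for ${\cal C\cal G}_0$ on $S\setminus T(\xi_i)$ (which has genus $g-1\geq 3$), but then the bridging step must be redone along that curve path, and the same genus-counting issue recurs at every step when $g=4$. Both problems are repairable with more careful bookkeeping (for instance by choosing the auxiliary curves $z_i^{L},z_i^{R}$ more economically so that consecutive ones agree, or by building $w_i$ as an arc-sum with a boundary component rather than via Lemma \ref{torus}), but your sketch as it stands does not carry this out.
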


Define now a graph ${\cal C\cal G}_2^+$ as follows. Vertices are
pairs $(x,y)$ where $x$ is a nonseparating simple closed curve on $S$ with 
$\phi(x)=2$ and where $y$ is a simple closed curve
with $\phi(y)=0$ 
intersecting $x$ in a single point.
Two such vertices are connected by an edge of length one if and only if they
can be realized disjointly. 

We use the above constructions to show
 
\begin{proposition}\label{connected2}
For $g\geq 4$ the graph ${\cal C\cal G}_2^+$ is connected.
  \end{proposition}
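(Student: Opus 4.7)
The plan is to deduce connectedness of ${\cal C\cal G}_2^+$ from the connectedness of ${\cal D}$ (Lemma \ref{next++}) by means of the Salter addition construction of Lemma \ref{add}. The key observation is that if $\big((a,b),y\big)$ is a vertex of ${\cal D}$, if the orientation of $a$ is chosen so that $\phi(a)=1$, if $\{a,y\}$ is a nonseparating pair, and if $\epsilon$ is an arc from $a$ to $y$ whose interior is disjoint from $a\cup b\cup y$, then $w=a+_\epsilon y$ is nonseparating with $\phi(w)=\phi(a)+\phi(y)+1=2$. Since $b$ is disjoint from $y\cup\epsilon$ and meets $a$ transversely in a single point, tracing $b$ through a thin regular neighborhood of $a\cup\epsilon\cup y$ shows that $b$ meets $w$ in exactly one point, so $(w,b)$ is a vertex of ${\cal C\cal G}_2^+$.

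First I would show that every vertex $(u,v)$ of ${\cal C\cal G}_2^+$ is adjacent in ${\cal C\cal G}_2^+$ to a vertex of this form. The complement $\Sigma=S-T(u,v)$ of the one-holed torus filled by $u\cup v$ has genus $g-1\geq 3$ with a single boundary component. Using Lemma \ref{torus} together with the Dehn twist adjustment arguments of Proposition \ref{connect}, I would produce inside $\Sigma$ a one-holed torus $T^1$ containing curves $a,b$ with $\phi(a)=1,\phi(b)=0,|a\cap b|=1$, and then a nonseparating curve $y\subset \Sigma-T^1$ with $\phi(y)=0$; since $\Sigma-T^1$ has genus $g-2\geq 2$, this is possible, and the resulting $\{a,y\}$ is a nonseparating pair in $\Sigma$ (hence in $S$). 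Any arc $\epsilon\subset\Sigma$ from $a$ to $y$ with interior in the complement of $a\cup b\cup y$ yields a vertex $(w,b)=(a+_\epsilon y,b)$ of ${\cal C\cal G}_2^+$ contained in $\Sigma$, hence disjoint from $u\cup v$ and connected to $(u,v)$ by an edge.

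For two vertices $(u,v),(u',v')\in {\cal C\cal G}_2^+$, apply this construction to each to obtain ${\cal D}$-vertices $\xi_0,\xi_k$ with associated ${\cal C\cal G}_2^+$-vertices adjacent to $(u,v)$ and $(u',v')$ respectively. Lemma \ref{next++} supplies an edge path $\xi_0,\xi_1,\dots,\xi_k$ in ${\cal D}$, with $\xi_i=\big((a_i,b_i),y_i\big)$. For each $i$ choose an arc $\epsilon_i$ from $a_i$ to $y_i$ whose interior is disjoint from $a_i\cup b_i\cup y_i\cup a_{i+1}\cup b_{i+1}\cup y_{i+1}$; then $w_i=a_i+_{\epsilon_i}y_i$ and $w_{i+1}=a_{i+1}+_{\epsilon_{i+1}}y_{i+1}$ are disjoint, so $(w_i,b_i)$ and $(w_{i+1},b_{i+1})$ are adjacent in ${\cal C\cal G}_2^+$. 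Concatenation with the two initial edges produces a path from $(u,v)$ to $(u',v')$.

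The principal obstacle is the arc-selection step above: the required arc $\epsilon_i$ in $S-(a_i\cup b_i\cup y_i)$ may be forced to cross curves of the adjacent triple $\{a_{i+1},b_{i+1},y_{i+1}\}$. I expect to handle this in the spirit of the final case in the proof of Proposition \ref{connected6}: cut $S$ along $a_i\cup y_i$ and apply Lemma \ref{connectedarc} to the nonseparating arc graph of the resulting subsurface to connect the offending arc to an acceptable one by an edge path, inserting at each elementary move an intermediate vertex in ${\cal C\cal G}_2^+$ obtained from a companion arc on the opposite pair of boundary circles of the three-holed sphere cut out by $a_i,y_i$ and the two arcs. The assumption $g\geq 4$ ensures that every subsurface involved has genus at least one, which provides enough room both for the choice of nonseparating pair $\{a,y\}$ in Step 1 and for these arc-push moves.
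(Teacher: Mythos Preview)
Your approach is essentially the same as the paper's: both associate to a vertex $((a,b),y)\in{\cal D}$ the ${\cal C\cal G}_2^+$-vertex $(a+_\epsilon y,\,b)$ via Lemma~\ref{add}, invoke connectedness of ${\cal D}$ (Lemma~\ref{next++}), and resolve the arc-crossing obstruction using the nonseparating arc graph as in Proposition~\ref{connected6}. Your write-up is in fact more explicit than the paper's terse sketch---in particular, you spell out why every vertex of ${\cal C\cal G}_2^+$ is adjacent to one arising from the construction, and you give the geometric reason why $b$ meets $a+_\epsilon y$ exactly once; the only small point to make precise is that the side of $a$ from which $\epsilon$ departs must be chosen compatibly with the orientation $\phi(a)=1$ so that Lemma~\ref{add} yields $\phi(a+_\epsilon y)=2$ rather than $0$, which the paper also notes.
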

  \begin{proof}
    Given a pair of disjoint simple closed curves $(c,d)$ with
    $\phi(c)=\pm 1$ and $\phi(d)=0$, cut $S$ open along $c,d$ and
    denote the boundary components of the resulting surface by
    $C_1,C_2,D_1,D_2$. For one of the two choices of $C_1,C_2$, say
    for $C_1$, the curve $c+_\epsilon d$ defined by any embedded
    arc $\epsilon$ connecting $C_1$ to either of $D_1,D_2$ satisfies
    $\phi(c+_\epsilon d)=\pm 2$.

    As a consequence,
    to any vertex $(c,d)\in {\cal D}$ we can associate in a
    non-deterministic way a vertex in ${\cal C\cal G}_2^+$ by
    replacing the simple closed curve $a$ with $\phi(a)=\pm 1$ in
    the pair which defines a vertex of ${\cal C\cal G}_1^{++}$
    to the simple closed curve component of the pair which defines
    a vertex in ${\cal D}$.
    
    Adjacent vertices may not give rise to disjoint curves,
   but this issue can be resolved  
      using a path in the nonseparating arc graph. Using the fact
      that
      the surface obtained by removing from $S$ a torus and cutting
      the resulting surface open along a nonseparating simple closed
      curve has genus at least $2$,
     we find for any
    two such arcs a disjoint curve $e$ with $\phi(e)=\pm 1$.
Connect $b$ to this curve with a disjoint arc. 
  \end{proof}

\end{appendix}

\bigskip

\noindent
MATHEMATISCHES INSTITUT DER UNIVERSIT\"AT BONN\\
Endenicher Allee 60,\\ 
D-53115 BONN, GERMANY

\bigskip\noindent
e-mail: ursula@math.uni-bonn.de

\begin{thebibliography}{BCGGM18}





\bibitem[BCGGM18]{BCGGM18} M.~Bainbridge, D.~Chen,
  Q.~Gendron, S.~Grushevsky, M.~M\"oller, {\em Compactification
   of strata of abelian differentials}, Duke Math. J. 167 (2018),
  2347--2416.


  
  















\bibitem[Cal20]{Cal19} A.~Calderon, {\em Connected components
    of strata of abelian differentials over Teichm\"uller space},
Comm. Math. Helv. 95 (2020), 361--420.
  
\bibitem[CS19]{CS19} A. Calderon, N. Salter, {\em Higher spin mapping class
groups and strata of abelian differentials over Teichm\"uller space},
arXiv:1906.03515.








 \bibitem[Co89]{Co89} M. Cornalba,
{\em Moduli of curves and theta characteristics},
in "Lectures of curves and theta characteristics". Trieste, 1987, World Sci. Publ.,
Teaneck, NJ, 1989.












  


\bibitem[FM12]{FM12} B. Farb, D. Margalit, {\em A primer on mapping
class groups}, Princeton Univ. Press, Princeton 2012. 










\bibitem[Hai95]{H95} R. Hain, {\em Torelli groups and geometry of the
moduli space of curves}, in "Current topics in complex algebraic geometry",
97--143, H. Clemens and J. Kollar, Editors, 
Math. Sci. Res. Inst. Publ. 28, Cambridge Univ. Press, Cambridge 1995.  






















\bibitem[Hi02]{Hi02} S. Hirose, {\em On diffeomorphisms over surfaces
trivially embedded in the 4-sphere}, Alg. Geom. Top. 2 (2002), 791--824.

\bibitem[Hi05]{Hi05} S. Hirose, {\em Surfaces in the complex projective
plane and their mapping class groups}, Alg. Geom. Top. 5 (2005), 577--613. 


\bibitem[HJ89]{HJ89} S.~Humphries, D.~Johnson, {\em A generalization
    of winding number functions on surfaces}, Proc. London Math. Soc.
  58 (1989), 366--386.
  

 




\bibitem[KZ03]{KZ03} M.~Kontsevich, A.~Zorich,
{\em Connected components of the moduli space of
Abelian differentials with prescribed singularities},
Invent. Math 153 (2003), 631--678.





\bibitem[Lei04]{L04} C. Leininger, {\em On groups generated by
two positive multi-twists: Teichm\"uller curves and Lehmer's number}, 
Geom. \& Top. 8 (2004), 1301--1359.




\bibitem[LM14]{LM14} E. Looijenga, G. Mondello, 
  {\em The fine structure of the moduli space of abelian
  differentials in genus 3}, Geom. Dedicata 169 (2014), 109--128.









\bibitem[MS06]{MS06} H.~Masur, S.~Schleimer, {\em The pants complex
    has only one end}, in ``Spaces of Kleinian groups'', 209--218,
  Cambridge Univ. Press, Cambridge, 2006.




\bibitem[Ma00]{Ma00} M. Matsumoto, {\em A presentation of mapping
class groups in terms of Artin groups and geometric monodromy
of singularities}, Math. Ann. 316 (2000), 401--418.


















\bibitem[PV96]{PV96} B. Perron, J.P. Vannier,
{\em Groupe de monodromie geometrique des singularlites simples},
Math. Ann. 306 (1996), 231--245.



\bibitem[Put08]{Put08} A. Putman, {\em A note on connectivity of certain
complexes associated to surfaces}, Enseign. Math. 54 (2008), 287--301.








\bibitem[Sa19]{Sa19} N.~Salter, {\em Monodromy and
    vanishing cycles in toric surfaces}, Invent. Math. 216 
  (2019), 153--213.


  











\bibitem[W09]{W09} K.~Walker, {\em Connected components of
strata of quadratic differentials over Teichm\"uller space},
Geom. Dedicata 142 (2009), 47--60. 




\bibitem[W10]{W10} K.~Walker, {\em Quotient groups of
fundamental groups of certain strata of the 
moduli space of quadratic differentials},
Geom. Top. 14 (2010), 1129--1164. 


\bibitem[Wol10]{Wol10} 
S. Wolpert, {\em Families of Riemann surfaces and Weil-Petersson geometry}, 
CBMS Regional Conference Series in Math. 113, Amer. Math. Soc., Providence 2010. 









\end{thebibliography}
\end{document}